\pdfoutput=1
\documentclass[11pt]{article}

\usepackage[utf8]{inputenc}
\usepackage{graphicx}
\usepackage[svgnames]{xcolor}
\usepackage{array,fullpage,enumitem,microtype}
\usepackage[leqno]{amsmath}
\usepackage{amssymb,amsthm,mathtools,bm,stmaryrd,tikz-cd}
\usepackage[unicode,bookmarksnumbered,bookmarksopen,psdextra,colorlinks=true,allcolors=DarkBlue]{hyperref}
\usepackage{bookmark}
\usepackage{tocbibind}
\usepackage[capitalize,nosort]{cleveref}

\usepackage{mymacros}

\newcommand*\op{\mathrm{op}}

\newcommand*\Sub{\mathrm{Sub}}
\makeatletter
\newcommand*\aqf{\mathrm{\newmcodes@-aqf}}
\makeatother

\let\defn\textbf


\mathlig{=>}{\Rightarrow}
\mathlig{<=}{\Leftarrow}
\mathlig{<=>}{\Leftrightarrow}
\mathlig{|-}{\vdash}
\mathlig{|=}{\models}

\begin{document}

\title{A universal characterization of standard Borel spaces}
\author{Ruiyuan Chen}
\date{}
\maketitle

\begin{abstract}
We prove that the category $\!{SBor}$ of standard Borel spaces is the (bi-)initial object in the 2-category of countably complete Boolean (countably) extensive categories.  This means that $\!{SBor}$ is the universal category admitting some familiar algebraic operations of countable arity (e.g., countable products, unions) obeying some simple compatibility conditions (e.g., products distribute over disjoint unions).  More generally, for any infinite regular cardinal $\kappa$, the dual of the category $\kappa\!{Bool}_\kappa$ of $\kappa$-presented $\kappa$-complete Boolean algebras is (bi-)initial in the 2-category of $\kappa$-complete Boolean ($\kappa$-)extensive categories.
\let\thefootnote=\relax
\footnotetext{2020 \emph{Mathematics Subject Classification}:
    03G30, 
    03E15, 
    06E75, 
    18C10. 
}
\footnotetext{\emph{Key words and phrases}: standard Borel space, $\kappa$-complete Boolean algebra, extensive category, categorical logic.}
\end{abstract}

\section{Introduction}

A \defn{standard Borel space} is a measurable space (i.e., set equipped with $\sigma$-algebra of subsets) which is isomorphic to a Borel subspace of Cantor space $2^\#N$.  Standard Borel spaces and \defn{Borel maps} (i.e., preimages of Borel sets are Borel) are ubiquitous in descriptive set theory as a basic model of ``definable sets'' and ``definable functions'' between them; see \cite{Kcdst}.  The notion of ``definability'' here is a coarse one where, roughly speaking, all countable information is considered definable.  As a result, standard Borel spaces are closed under familiar set operations of countable arity, e.g., countable products, countable unions, injective Borel images.

In this paper, we give a purely ``synthetic'' characterization of the category $\!{SBor}$ of standard Borel spaces and Borel maps: we prove that $\!{SBor}$ is the free category generated by some familiar set operations (e.g., those above) subject to some obvious compatibility conditions between them (e.g., products distribute over unions).  In other words, every standard Borel space or Borel map can be regarded as a formal expression built from these operations (e.g., $\bigsqcup_{n \le \aleph_0} 2^n$); and more significantly, every true statement about the spaces or maps denoted by such expressions (e.g., a given map is an isomorphism) can be deduced in a purely formal manner from the compatibility conditions.

The operations on $\!{SBor}$ take the form of certain well-behaved limits and colimits, which we now briefly describe; see \cref{sec:cat} for the precise definitions.
Recall that \defn{limits} in a category $\!C$ give abstract versions of such set operations as products (including nullary, i.e., the 1-element set), preimages of subsets, inverse limits, and the equality binary relation.
Limits that exist in $\!C$ are automatically ``fully compatible'' with themselves.%
\footnote{They obey all compatibility conditions that hold in the category $\!{Set}$ of sets, by the Yoneda embedding.}
We call $\!C$ \defn{complete}, or more generally \defn{$\kappa$-complete} for a regular cardinal $\kappa$, if $\!C$ has all ($\kappa$-ary) limits.
\defn{Colimits}, dual to limits, are abstract versions of such set operations as disjoint unions (given by \defn{coproducts}), images, and quotients.
There are many well-studied compatibility conditions between limits and colimits, often known as ``exactness conditions'', based on conditions that hold in familiar categories such as $\!{Set}$; see e.g., \cite{GL}.
For a category $\!C$ with finite limits and coproducts, the condition known as \defn{extensivity} \cite{CLW}, based on the behavior of disjoint unions in $\!{Set}$, says that a map $Y -> \bigsqcup_{i \in I} X_i \in \!C$ into a finite coproduct is the same thing as a partition $Y = \bigsqcup_{i \in I} Y_i$ together with maps $Y_i -> X_i$ for each $i$.  More generally, $\!C$ is \defn{$\kappa$-extensive} if it has $\kappa$-ary coproducts obeying the analogous condition.
Finally, we say that a extensive category $\!C$ is \defn{Boolean} if the subobject poset $\Sub_\!C(X)$ of each object $X \in \!C$ is a Boolean algebra.

\begin{theorem}
\label{thm:intro-sbor-init}
The category $\!{SBor}$ of standard Borel spaces and Borel maps is the initial object in the 2-category of countably complete Boolean (countably) extensive categories.
\end{theorem}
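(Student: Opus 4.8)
The plan is to prove the slightly more general statement that $(\kappa\!{Bool}_\kappa)^\op$ is bi-initial among $\kappa$-complete Boolean $\kappa$-extensive categories, and to deduce \cref{thm:intro-sbor-init} (the case $\kappa = \aleph_1$) from an equivalence $\!{SBor} \simeq (\aleph_1\!{Bool}_{\aleph_1})^\op$. This equivalence is implemented by $\mathcal B\colon \!{SBor}^\op \to \aleph_1\!{Bool}_{\aleph_1}$, sending a standard Borel space to its $\sigma$-algebra of Borel sets, and it is here that classical descriptive set theory enters. It is faithful since a Borel map is determined by its preimage map; full since a $\sigma$-homomorphism $\mathcal B(Y) \to \mathcal B(X)$, after one fixes a Borel embedding $Y \hookrightarrow 2^\#N$, is realized as $f^{-1}$ for the Borel map $f\colon X \to Y$ assembled coordinatewise from the images of the coordinate sets, with Lusin--Suslin supplying the Borel inverse; it lands in $\aleph_1\!{Bool}_{\aleph_1}$ because $\mathcal B(2^\#N)$ is the free $\sigma$-complete Boolean algebra on countably many generators --- any countable sequence in a $\sigma$-complete Boolean algebra $C$ induces a $\sigma$-homomorphism $\mathcal B(2^\#N) \to C$ by Loomis--Sikorski (lift $C$ to a $\sigma$-field of sets and read off a measurable map to $2^\#N$), and the coordinate sets generate --- and it is essentially surjective because an $\aleph_1$-presented $\sigma$-complete Boolean algebra is then a quotient of $\mathcal B(2^\#N)$ by a countably generated, hence principal, $\sigma$-ideal, which is therefore $\mathcal B$ of a Borel subspace of $2^\#N$. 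Along the way one checks, using the same classical facts, that $\!{SBor}$ genuinely belongs to the 2-category in question, so in particular $\Sub_{\!{SBor}}(X) = \mathcal B(X)$.

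Now fix a $\kappa$-complete Boolean $\kappa$-extensive category $\!C$. Two facts, routine from the definitions of \cref{sec:cat}, drive the construction: (i) each $\Sub_\!C(X)$ is a $\kappa$-complete Boolean algebra --- meets are pullbacks, complements hold by hypothesis, and $\kappa$-joins are De Morgan duals of $\kappa$-meets --- and pullback along any morphism is a homomorphism of such; and (ii) in such a category every subobject is a summand, so that $\!C(X, 1 \sqcup 1) \cong \Sub_\!C(X)$ naturally in $X$, and more generally $\!C(X, \coprod_{i \in I} 1)$ is the set of $I$-ary partitions of $X$ when $|I| < \kappa$. Combining these: if a $\kappa$-presented $\kappa$-complete Boolean algebra $B$ is given by generators $(g_i)_{i \in I}$ and relations $(r_j = 0)_{j \in J}$ with $|I|, |J| < \kappa$, then the representable functor $\!C(-, (1 \sqcup 1)^I)$, restricted along the subobject $\bigwedge_j \neg \widehat{r}_j \in \Sub_\!C((1 \sqcup 1)^I)$ cut out by evaluating the terms $r_j$ on the generic coordinate subobjects, is exactly $X \mapsto \kappa\!{Bool}_\kappa(B, \Sub_\!C(X))$; hence the latter is representable, say by $\mathrm{Spec}_\!C(B)$. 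By Yoneda, $B \mapsto \mathrm{Spec}_\!C(B)$ is a functor $(\kappa\!{Bool}_\kappa)^\op \to \!C$, and the candidate is $F \coloneqq \mathrm{Spec}_\!C \circ \mathcal B \colon \!{SBor} \to \!C$; note $F(1) = 1$, $F(2) = 1 \sqcup 1$ and $F(2^\#N) = (1 \sqcup 1)^\#N$.

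Uniqueness is the soft part. A structure-preserving $F'$ preserves monos, $\kappa$-limits (hence $\kappa$-powers), coproducts, and the $\kappa$-complete Boolean structure of subobject posets; it is therefore forced to send $1 \sqcup 1$ to $1 \sqcup 1$ and $2^\#N$ to $(1 \sqcup 1)^\#N$, to send each standard Borel space $X$ --- which via any Borel embedding into $2^\#N$ and any Borel code for its image is a $\kappa$-ary Boolean combination of coordinate subobjects of $(1 \sqcup 1)^\#N$ --- to the same subobject, and to send each Borel map $f$ to the unique morphism with graph $F(\Gamma_f)$. This pins down $F'$ up to a canonical natural isomorphism with $F$, and the same bookkeeping shows the only natural automorphism of such a functor is the identity (determine it on $1 \sqcup 1$ via the two coprojections, then on $\kappa$-powers, then on all objects by monicity), so the category of structure-preserving functors $\!{SBor} \to \!C$ and natural transformations is contractible, which is the meaning of (bi-)initiality.

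The substance is showing that $F$ \emph{preserves} the structure, i.e.\ that $\mathrm{Spec}_\!C$ carries the $\kappa$-limits, $\kappa$-coproducts and Boolean subobject structure of $(\kappa\!{Bool}_\kappa)^\op$ to those of $\!C$. Through the representability above, each such statement reduces to an identity about the functor $X \mapsto \kappa\!{Bool}_\kappa(-, \Sub_\!C(X))$: preservation of $\kappa$-ary coproducts, for instance, becomes the assertion that a $\kappa$-complete Boolean homomorphism out of a $\kappa$-ary product $\prod_\alpha B_\alpha$ is the same datum as a $\kappa$-ary partition of its codomain together with homomorphisms out of the $B_\alpha$ --- precisely the Boolean-algebra shadow of $\kappa$-extensivity of $\!C$ applied to $\Sub_\!C$ --- and the limit side is analogous and easier. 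This is the main obstacle, because it amounts to a completeness theorem: the axioms of a $\kappa$-complete Boolean $\kappa$-extensive category must already derive every identity among $\kappa$-ary ``Borel codes'' that holds in $\!{Set}$ (equivalently, in $\!{SBor}$), and the real work is to see that each such identity follows purely from $\Sub_\!C(X)$ being an honest $\kappa$-complete Boolean algebra and from naturality of the $\kappa$-extensivity bijections, with no further exactness hypothesis --- images, regularity, or local presentability --- covertly invoked.
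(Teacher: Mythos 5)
Your overall architecture coincides with the paper's: reduce everything to the statement that $\kappa\!{Bool}_\kappa^\op$ is (bi-)initial, and transport along Loomis--Sikorski duality when $\kappa = \omega_1$. Where you genuinely diverge is in how initiality of $\kappa\!{Bool}_\kappa^\op$ is proved. The paper presents $\kappa\!{Bool}_\kappa^\op$ as the syntactic category of an almost universal theory $\@T_2$ and gets the comparison functor from the universal property of syntactic categories; the technical weight then falls on quantifier elimination (\cref{lm:qee}), i.e.\ on showing that the a priori larger supply of subobjects in the free category (images of provably-unique existentials) collapses onto the quantifier-free Boolean combinations, which is where the LaGrange interpolation theorem (\cref{thm:lagrange}, via \cref{thm:lusin-suslin}) enters. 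You instead build the comparison functor directly, by representing $X \mapsto \kappa\!{Bool}_\kappa(B, \Sub_\!C(X))$ as a Boolean-combination subobject of $(1 \sqcup 1)^I$; then $\kappa$-continuity of $\mathrm{Spec}_\!C$ is formal (Yoneda, plus the fact that $\kappa\!{Bool}(-,\Sub_\!C(X))$ turns $\kappa$-ary colimits of algebras into limits), and images of monomorphisms never need to be computed, so the free category and its existential quantifiers are bypassed entirely. If this goes through, the only non-formal input is what is needed to place $\!{SBor}$, resp.\ $\kappa\!{Bool}_\kappa^\op$, inside the 2-category at all (classical Lusin--Suslin, resp.\ \cref{thm:kbool-epi} and \cref{thm:kboolk-quotient}), and that is a real economy relative to the paper.

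That said, as written the proposal is a strategy rather than a proof: your closing paragraph names the preservation of structure by $\mathrm{Spec}_\!C$ as ``the main obstacle'' and then describes the verification instead of performing it. You should actually discharge it, and I believe it does discharge, but not quite for the reason you give: there is no open-ended ``completeness theorem'' about identities of Borel codes to be proved, because your construction only ever interprets elements of \emph{free} algebras $\@K(I)$ (Loomis--Sikorski is exactly what reduces $\@B(2^{\#N})$ to a free algebra, so no unprovable identities can arise). The two concrete verifications you owe are: (a) the natural bijection $\!C(X, 1 \sqcup 1) \cong \Sub_\!C(X)$, which is not a definition-chase but requires \cref{thm:subext-join-coprod} (disjoint joins of subobjects are coproducts) to turn a complemented subobject into a summand; and (b) that the canonical map $\bigsqcup_\alpha \mathrm{Spec}_\!C(B_\alpha) \to \mathrm{Spec}_\!C(\prod_\alpha B_\alpha)$ is invertible, which via representability reduces to the decomposition of a $\kappa$-Boolean homomorphism $\prod_\alpha B_\alpha \to \Sub_\!C(X)$ into a partition $(X_\alpha)$ of $X$ plus homomorphisms $B_\alpha \to \Sub_\!C(X_\alpha)$ --- the same computation as \cref{thm:kboolk-prod,thm:kboolk-ext}, transported along $\Sub_\!C$, and again using \cref{thm:subext-join-coprod} to identify partitions of $\top_{\Sub_\!C(X)}$ with coproduct decompositions of $X$ and $\down X_\alpha$ with $\Sub_\!C(X_\alpha)$. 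You should also make explicit that Booleanness of the \emph{source} category is itself a theorem (Lusin--Suslin for $\!{SBor}$; LaGrange's surjectivity of epimorphisms for general $\kappa$), since your phrase ``using the same classical facts'' is carrying that load. With (a) and (b) written out and the routine naturality checks in the uniqueness argument supplied, your route appears to be a correct and shorter alternative to the paper's.
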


Here, the morphisms $\@F : \!C -> \!D$ between two countably complete Boolean countably extensive categories are the functors preserving all of these operations, i.e., the \defn{countably continuous, countably extensive} functors.  As with most freeness statements about categories equipped with algebraic operations, we necessarily must consider the \defn{2-category} of countably complete Boolean countably extensive categories, countably continuous countably extensive functors between them, \emph{and natural transformations between such functors}.  This is because algebraic operations on a category (e.g., limits) are almost never uniquely defined, but only defined up to canonical isomorphism; thus, the usual universal property of a free algebra must be weakened to take natural isomorphisms between functors into account.  Accordingly, the notion of \defn{initial object} in \cref{thm:intro-sbor-init} refers to what is known as a \defn{bicolimit} in 2-category theory (see \cite{BKP}): for any other countably complete Boolean countably extensive category $\!C$, the category of countably continuous countably extensive functors $\!{SBor} -> \!C$ is (not necessarily isomorphic but) equivalent to the one-object one-morphism category.  In particular, there is a unique such functor $\!{SBor} -> \!C$, up to unique natural isomorphism between any two such functors.  Note that this characterizes $\!{SBor}$ up to equivalence.

The parenthetical in \cref{thm:intro-sbor-init} is because countable limits, finite extensivity, and Booleanness easily imply countable extensivity (see \cref{thm:klimbext-kext}).

We would like to stress that \cref{thm:intro-sbor-init} characterizes $\!{SBor}$ \emph{as an abstract category}, i.e., without an \emph{a priori} given underlying set functor $\!{SBor} -> \!{Set}$.  Based on the inductive definition of Borel sets, it is fairly obvious that $\!{SBor}$ as a (conservative) subcategory of $\!{Set}$ is generated by the specified operations.%
\footnote{A \defn{conservative} subcategory is one closed under existing inverses.
From the nullary product $1$, we get the disjoint union $2 = 1 \sqcup 1$, hence the product $2^\#N$; pulling back $1 `-> 2$ along projections $2^\#N -> 2$ gives the subbasic clopen sets in $2^\#N$, hence by Booleanness and countable completeness, all Borel subspaces of $2^\#N$.  Since a map between standard Borel spaces is Borel iff its graph is \cite[14.12]{Kcdst}, by conservativity applied to projections from graphs, we get all Borel maps.}
Thus, the force of \cref{thm:intro-sbor-init} is that any true statement in $\!{SBor}$ only mentioning these operations may be deduced purely formally from the compatibility axioms.

There is a classical result much along the lines of \cref{thm:intro-sbor-init}, and forming an important part of its proof: the \defn{Loomis--Sikorski representation theorem} says (in one equivalent formulation) that the Borel $\sigma$-algebra $\@B(2^\#N)$ of Cantor space is freely generated, as an \emph{abstract} Boolean $\sigma$-algebra (countably complete Boolean algebra), by the subbasic clopen sets in $2^\#N$; see \cref{sec:loomis-sikorski}.  An easy consequence is that we have a dual equivalence of categories
\begin{align*}
\!{SBor} &\cong \sigma\!{Bool}_\sigma^\op \\
X &|-> \@B(X) \\
(f : X -> Y) &|-> (f^{-1} : \@B(Y) -> \@B(X))
\end{align*}
with the category $\sigma\!{Bool}_\sigma$ of countably presented Boolean $\sigma$-algebras.  In fact, this equivalence, which we will refer to as \defn{Loomis--Sikorski duality}, is the only role of classical descriptive set theory in \cref{thm:intro-sbor-init}, which is a consequence of Loomis--Sikorski duality together with

\begin{theorem}[\cref{thm:kboolk-init}]
\label{thm:intro-kboolk-init}
For any infinite regular cardinal $\kappa$, the dual $\kappa\!{Bool}_\kappa^\op$ of the category $\kappa\!{Bool}_\kappa$ of $\kappa$-presented $\kappa$-complete Boolean algebras is the initial object in the 2-category of $\kappa$-complete Boolean ($\kappa$-)extensive categories.
\end{theorem}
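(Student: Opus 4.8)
The plan is to verify that $\kappa\!{Bool}_\kappa^\op$ lies in the $2$-category of $\kappa$-complete Boolean $\kappa$-extensive categories, and then that it is bi-initial there, i.e.\ that for every such $\!C$ the category of $\kappa$-continuous $\kappa$-extensive functors $\kappa\!{Bool}_\kappa^\op \to \!C$ is equivalent to the terminal category. For the first part: $\kappa\!{Bool}$ is locally $\kappa$-presentable, so its full subcategory $\kappa\!{Bool}_\kappa$ of $\kappa$-presentable objects has all $\kappa$-small colimits, hence $\kappa\!{Bool}_\kappa^\op$ has all $\kappa$-small limits; after checking that $\kappa$-small products of $\kappa$-presented $\kappa$-complete Boolean algebras stay $\kappa$-presented, this gives the $\kappa$-small coproducts of the dual. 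Booleanness: a subobject of $B$ in $\kappa\!{Bool}_\kappa^\op$ is an isomorphism class of epimorphism out of $B$; since epimorphisms of $\kappa$-complete Boolean algebras are surjective and a $\kappa$-ideal with $\kappa$-presented quotient is generated by fewer than $\kappa$ elements, hence principal, these subobjects are classified by the elements of $B$, so $\Sub_{\kappa\!{Bool}_\kappa^\op}(B) \cong B$. For $\kappa$-extensivity it then suffices, by \cref{thm:klimbext-kext}, to establish finite extensivity, which comes from the description of a homomorphism out of a binary product $B_1 \times B_2$ as a pair of complementary idempotents of the codomain together with homomorphisms from $B_1, B_2$ into the corresponding relative algebras, read in the dual.

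For the universal property, fix $\!C$. Recall that in any Boolean extensive category the object $2 := 1 \sqcup 1$ represents the subobject functor $\Sub$ (a map $X \to 1 \sqcup 1$ is a binary decomposition of $X$, equivalently, as every subobject is a complemented summand, an arbitrary subobject), that $\Sub_{\!C}(X)$ is a $\kappa$-complete Boolean algebra (finite meets are pullbacks, $\kappa$-small meets are $\kappa$-small limits, complements from Booleanness, $\kappa$-small joins by De Morgan), and that each pullback map $f^*$ is a $\kappa$-complete homomorphism, by universality of coproducts. I would then define $F \colon \kappa\!{Bool}_\kappa^\op \to \!C$ by the representing-object clause
\[ \!C\bigl(X, F(B)\bigr) \;\cong\; \{\text{$\kappa$-complete homomorphisms } B \to \Sub_{\!C}(X)\}, \qquad \text{naturally in $X$.} \]
Concretely, for a presentation $B = \langle G \mid R\rangle$ with $|G|, |R| < \kappa$, the right-hand side is the subset of $\Sub_{\!C}(X)^{G} = \!C(X, 2^{G})$ cut out by the relations $R$, each an equation between two $\kappa$-ary Boolean terms, hence an equalizer of two morphisms $2^{G} \to 2$; so $F(B)$ exists as the corresponding $\kappa$-small limit in $\!C$, and one checks it to be presentation-independent and functorial. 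That $F$ is $\kappa$-continuous is immediate, since $\kappa$-small limits in $\kappa\!{Bool}_\kappa^\op$ are $\kappa$-small colimits of algebras and $\kappa$-complete homomorphisms out of such a colimit form the corresponding limit. That $F$ is $\kappa$-extensive follows by combining the product-homomorphism description above with extensivity of $\!C$: a partition of unity in $\Sub_{\!C}(X)$ is a decomposition of $X$, and the localized homomorphisms reassemble into a map $X \to \coprod_i F(B_i)$, giving $F(\prod_i B_i) \cong \coprod_i F(B_i)$.

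Uniqueness is forced. Any $\kappa$-continuous $\kappa$-extensive $F' \colon \kappa\!{Bool}_\kappa^\op \to \!C$ preserves the terminal object ($1$, the initial Boolean algebra) and finite coproducts, so $F'(1\sqcup 1) \cong 2$; moreover, being $\kappa$-continuous and preserving finite coproducts, $F'$ induces on each subobject poset a $\kappa$-complete Boolean homomorphism $\Sub_{\kappa\!{Bool}_\kappa^\op}(Y) \to \Sub_{\!C}(F'Y)$, sending the coordinate subobjects of the $\kappa$-small power $(1\sqcup1)^{G}$ to those of $2^{G}$, hence interpreting Boolean terms correctly. Now every $B \in \kappa\!{Bool}_\kappa^\op$ is the joint equalizer of the $R$-indexed family of pairs of term-maps $(1\sqcup1)^{G} \rightrightarrows 1 \sqcup 1$ associated with any presentation $\langle G \mid R\rangle$ — this being the dual of $B = F_\kappa(G)/\langle R\rangle$ with $(1\sqcup1)^{G}$ the free algebra $F_\kappa(G)$ — so applying $F'$, which preserves this $\kappa$-small limit, the power $(1\sqcup1)^{G}$, and the term-maps, shows that $F'(B)$ satisfies the very clause defining $F(B)$. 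Hence $F' \cong F$ by an isomorphism that, being forced stage by stage (unique map of terminal objects, then the universal properties of the products, coproducts, and equalizers in play), is the unique one; the same reasoning shows every natural transformation between two such functors is this isomorphism. Thus the functor category is equivalent to the terminal category, which is the asserted bi-initiality.

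I expect the main obstacle to be the existence half — in particular the verification that $F$ is $\kappa$-extensive — where one must track all arities carefully so that the limits and coproducts in sight remain genuinely $\kappa$-small, and must carry out the bookkeeping identifying homomorphisms out of $\kappa$-small products of Boolean algebras with coproduct decompositions via extensivity of $\!C$. A secondary point requiring care is the first part, that $\kappa\!{Bool}_\kappa^\op$ belongs to the $2$-category at all: that epimorphisms of $\kappa$-complete Boolean algebras are surjective, and that $\kappa$-small products — and $\kappa$-presented quotients — of $\kappa$-presented $\kappa$-complete Boolean algebras remain $\kappa$-presented.
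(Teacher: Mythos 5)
Your proposal is correct in outline but follows a genuinely different route from the paper. The paper never constructs the comparison functor directly: it presents $\kappa\!{Bool}_\kappa^\op$ as the syntactic category $\ang{\@L_2 \mid \@T_2}$ of an ``almost universal'' $\@L_{\kappa\kappa}$-theory axiomatizing $1 \sqcup 1$, obtains initiality from the universal property of syntactic categories plus the contractibility of the category of $\@T_2$-models in any extensive category, and concentrates all the difficulty in proving the equivalence $\ang{\@L_2 \mid \@T_2} \simeq \kappa\!{Bool}_\kappa^\op$ via a quantifier-elimination lemma, which rests on the full LaGrange interpolation theorem (\cref{thm:lagrange}) through the dual Lusin--Suslin statement (\cref{thm:lusin-suslin}). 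You instead build $F$ directly as the internal ``object of models'' of $B$, representing $\kappa\!{Bool}(B, \Sub_{\!C}(-))$ as a $\kappa$-ary limit cut out in $(1\sqcup 1)^G$ by the relations, and get uniqueness from the facts that every $\kappa$-presented algebra is the joint equalizer of term-maps on $(1\sqcup1)^G$ and that $\Sub_{\kappa\!{Bool}_\kappa^\op}((1\sqcup1)^G) \cong \@K(G)$ is \emph{free}, so that any $\kappa$-continuous extensive $F'$ is forced on subobject lattices by its values on generators. This buys you a shorter argument that bypasses the aqf fragment of $\@L_{\kappa\kappa}$, the syntactic-category machinery, and \cref{thm:lusin-suslin} entirely; what it loses is the presentation result (\cref{thm:kboolk=syncat}) and the quantifier-elimination statement, which have independent interest. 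Two cautions. First, your ``secondary point requiring care'' --- that epimorphisms of $\kappa$-complete Boolean algebras are surjective --- is not routine bookkeeping: it is LaGrange's theorem, the one genuinely deep input shared by both proofs (for $\kappa = \omega_1$ it is essentially the Lusin separation/Lusin--Suslin theorem in dual form), and your identification $\Sub_{\kappa\!{Bool}_\kappa^\op}(B) \cong B$, on which everything else rests, collapses without it; you also need \cref{thm:alexandrov} to know that a $\kappa$-presented quotient is presented by a single element over any given $\kappa$-ary generating set. Second, the step you dismiss as ``forced stage by stage'' is where the remaining real work lives: you must produce the comparison isomorphism $\theta_B : F'(B) \to F(B)$ and verify its naturality against \emph{arbitrary} homomorphisms $B' \to B$, which requires lifting them to maps of free algebras and chasing through the monos $F''(B') \rightarrowtail F''((1\sqcup1)^{G'})$; this does go through, but it is not automatic and should be written out.
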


When $\kappa = \omega_1$, this reduces to \cref{thm:intro-sbor-init} by Loomis--Sikorski duality.  Also note that when $\kappa = \omega$, we recover the well-known fact that the category of finite sets (dual to $\omega\!{Bool}_\omega$, the category of finite Boolean algebras) is the initial finitely complete extensive category (see e.g., \cite[A1.4.7]{Jeleph}).

We now briefly discuss the proof of \cref{thm:intro-kboolk-init}.  By the Lusin--Suslin theorem \cite[15.1]{Kcdst}, Borel injections between standard Borel spaces have Borel images, whence subobjects in $\!{SBor}$ are Borel subspaces; analogously, by a result of Lagrange \cite{Lag} (see \cref{thm:kboolk-quotient}), subobjects of $A \in \kappa\!{Bool}_\kappa^\op$ are in canonical bijection with elements $a \in A$.  Thus, in order to exploit the universal property of $\kappa$-presented algebras in the proof of \cref{thm:intro-kboolk-init}, it is convenient to reformulate the notion of $\kappa$-complete Boolean $\kappa$-extensive category in terms of ``objects and subobjects'' instead of ``objects and morphisms''.  This is achieved by the standard notion of \defn{hyperdoctrine} from categorical logic, due to Lawvere \cite{Law}.
For a category $\!C$ with enough compatible limits and colimits, we may equivalently describe $\!C$ as a collection of objects together with, for each tuple $\vec{A}$ of objects in $\!C$, a lattice $\Sub_\!C(\vec{A})$ of ``$\vec{A}$-ary predicates'', thought of as the subobject lattice of $\prod \vec{A}$, equipped with ``variable substitution'' and ``quantification'' maps between them as well as distinguished ``equality predicates'' $({=_A}) \in \Sub_\!C(A, A)$ for each $A$.
From this data, called the \defn{subobject hyperdoctrine} of $\!C$, $\!C$ may be reconstructed by taking morphisms to be binary predicates which are ``function graphs'' (expressed internally).  
See \cite{Law}, \cite{Jac} for details.

Rather than work directly with hyperdoctrines, we prefer to work with their presentations, i.e., first-order theories.  Given a multi-sorted relational first-order theory $\@T$ in a language $\@L$, a hyperdoctrine may be constructed whose objects are the sorts and whose predicates are the $\@T$-provable equivalence classes of formulas.  Constructing a category from this hyperdoctrine as above yields the \defn{syntactic category} $\ang{\@L \mid \@T}$ of the theory $\@T$, which is the category ``presented'' by $(\@L, \@T)$ in terms of objects (the sorts), subobjects (relation symbols in $\@L$), and relations between subobjects (axioms of $\@T$); see \cite[Ch.~8]{MR}, \cite[D1.3]{Jeleph}.
We will give in \cref{sec:authy} in some detail the one-step construction (bypassing hyperdoctrines) of syntactic categories for theories presenting $\kappa$-complete Boolean $\kappa$-extensive categories.  One reason for our level of detail is that the fragment of first-order logic whose theories present such categories does not appear to have been well-studied, and involves some technical complications: the fragment consists of formulas in the $\kappa$-ary infinitary first-order logic $\@L_{\kappa\kappa}$ (with $\kappa$-ary quantifiers and $\bigwedge, \bigvee$) which are ``\defn{almost quantifier-free}'' in that $\forall$ does not appear, and every $\exists$ that appears is already provably unique modulo the theory.  (This ``provably unique $\exists$'' is familiar in categorical logic from \defn{finite-limit logic} or \defn{Cartesian logic}, the fragment of $\@L_{\omega\omega}$ used to present finitely complete categories; see \cite[D1.3.4]{Jeleph}.)

The method of proof of \cref{thm:intro-kboolk-init} will be to show that $\kappa\!{Bool}_\kappa^\op$ is presented by a ``trivial'' theory axiomatizing structure automatically present in every $\kappa$-complete Boolean $\kappa$-extensive category.  That is, we show that $\kappa\!{Bool}_\kappa^\op$ is equivalent to the syntactic category of such a theory.  This will be easy for the part of the syntactic category consisting of quantifier-free formulas.  So the main difficulty will be to show that the ``provably unique $\exists$'' quantifier can be provably eliminated from formulas.  In terms of $\!{SBor}$, this amounts to showing that an injective Borel image can be ``algebraically witnessed''; thus the key to the proof will be an analysis of the algebraic content of the proof of the Lusin--Suslin theorem and its $\kappa$-ary analog (see \cref{rmk:lusin-suslin,lm:qee}).

\subsection{Connections with other work}

Abstract characterizations of various categories of topological spaces are known.  It is folklore that the category of Stone spaces is the pro-completion of the category of finite sets (see \cite[VI~2.3]{Jstone}).  In \cite{MRg}, a characterization is given of the category of compact Hausdorff spaces.


Recently, there have been efforts to formulate a \emph{synthetic descriptive set theory}; see Pauly--de~Brecht \cite{PdB}.
The goal of such a program is to uniformly recover key notions and results from several known versions of descriptive set theory (classical, effective, generalized to other kinds of spaces, e.g., \cite{Sel}) in an abstract manner from the relevant categories of spaces, thereby providing an explanation for the informal analogies between these theories.  \Cref{thm:intro-sbor-init} shows that in the boldface, purely Borel context (without any topology), we have a complete answer as to how much categorical structure and axioms are required to fully recover the classical theory in the form of $\!{SBor}$.  Moreover, the recovery here takes the satisfying form of a universal property uniquely characterizing $\!{SBor}$ among all possible ``models'' of these axioms.

The Loomis--Sikorski duality $\!{SBor} \cong \sigma\!{Bool}_\sigma^\op$ is part of a family of analogies (see \cite{Isb}, \cite{Hec}, \cite{Cqpol}, \cite[arXiv version]{Cscc}) between descriptive set theory and \emph{pointless topology} or \emph{locale theory} (see \cite{Jstone}, \cite[C1.1--2]{Jeleph}) suggesting that elementary
classical descriptive set theory is in many ways the ``countably presented fragment'' of locale theory.
Adopting this point of view, one can define a \defn{standard $\kappa$-Borel locale} $X$ to formally mean the same thing as a $\kappa$-presented $\kappa$-complete Boolean algebra $\@B_\kappa(X)$;
a \defn{$\kappa$-Borel map} $f : X -> Y$ between such $X, Y$ is a $\kappa$-Boolean homomorphism $f^* : \@B_\kappa(Y) -> \@B_\kappa(X)$.  Thus, standard $\kappa$-Borel locales are to standard Borel spaces as $\kappa$-copresented $\kappa$-locales (see \cite{Mad}) are to de~Brecht's \emph{quasi-Polish spaces} (see \cite{deB}, \cite{Hec}).  \Cref{thm:intro-kboolk-init} then becomes the literal $\kappa$-ary generalization of \cref{thm:intro-sbor-init}.  The proof of \cref{thm:intro-kboolk-init} is largely informed by this point of view as well, e.g., to show that $\kappa\!{Bool}_\kappa^\op$ is indeed $\kappa$-complete $\kappa$-extensive (see \cref{thm:kboolk-prod,thm:kboolk-ext}), or the aforementioned result of Lagrange \cite{Lag} which is a $\kappa$-ary version of the Lusin separation theorem playing an analogous role in the proof of $\kappa$-ary Lusin--Suslin (see \cref{rmk:lusin-suslin}).
See \cite{Cloc} for a systematic development of these ideas.
To keep this paper as concrete as possible, we will not explicitly refer to standard $\kappa$-Borel locales in what follows.

\subsection{Contents of paper}

This paper should hopefully be readable with basic category-theoretic background, although some familiarity with categorical logic would be helpful.

In \cref{sec:cat}, we review some (2-)category theory, including extensive categories and variants.

In \cref{sec:kbool}, we review and introduce needed background on $\kappa$-Boolean algebras, including the results of Lagrange \cite{Lag} as well as the fact that $\kappa\!{Bool}_\kappa^\op$ is $\kappa$-complete Boolean $\kappa$-extensive.

In \cref{sec:loomis-sikorski}, we review Loomis--Sikorski duality, which we formulate in terms of a Stone-type dual adjunction (in the sense of e.g., \cite[VI~\S4]{Jstone}) between measurable spaces and Boolean $\sigma$-algebras.  We include a self-contained proof.

Finally, in \cref{sec:authy}, we develop the ``almost quantifier-free'' fragment of $\@L_{\kappa\kappa}$, show that its theories present (via the syntactic category construction) $\kappa$-complete Boolean $\kappa$-(sub)extensive categories, and use such a theory to prove \cref{thm:intro-kboolk-init}.

\paragraph*{Acknowledgments}

I would like to thank Matthew de~Brecht for some stimulating discussions that partly inspired this work, as well as the referee for some helpful corrections and suggestions.

\section{Categories}
\label{sec:cat}

We assume familiarity with basic notions of category theory, including e.g., limits, colimits, adjoint functors; see e.g., \cite{Mac}, \cite{Bor}.  Categories will be denoted with sans-serif symbols like $\!C$.  Hom-sets will be denoted $\!C(X, Y)$ for $X, Y \in \!C$; identity morphisms will be denoted $1_X : X -> X$.  The category of sets will be denoted $\!{Set}$.  The \defn{terminal category} has a single object and single (identity) morphism.

Fix throughout the paper an infinite regular cardinal $\kappa$; the case $\kappa = \omega_1$ will be typical.  By \defn{$\kappa$-ary}, we mean of size $<\kappa$.  Let $\!{Set}_\kappa \setle \!{Set}$ be the full subcategory of $\kappa$-ary sets.  Also fix some set $\#U$ of size $\ge \kappa$ (e.g., $\#U := \kappa$), for convenience assuming $\#N \setle \#U$, and let $\!{Set}'_\kappa \setle \!{Set}_\kappa$ be the full subcategory of $\kappa$-ary subsets of $\#U$; thus $\!{Set}'_\kappa \subseteq \!{Set}_\kappa$ is a small full subcategory containing a set of each cardinality $<\kappa$, hence equivalent to $\!{Set}_\kappa$.

Recall that a \defn{subobject} of an object $X \in \!C$ of a category $\!C$ is an equivalence class of monomorphisms $f : A `-> X \in \!C$ with respect to the preorder
\begin{equation*}
(f : A `-> X) \setle (g : B `-> X) \coloniff \exists h : A -> B\, (f = g \circ h)
\end{equation*}
(such an $h$ is necessarily unique).  As is common, we will often abuse terminology regarding subobjects: we also refer to single monomorphisms as subobjects, identified with their equivalence classes; we sometimes write a subobject $f : A `-> X$ (i.e., its equivalence class) simply as $A \setle X$; and we generally use familiar notations for subsets with the obvious meanings (e.g., $\cup, \cap$).  Let
\begin{equation*}
\Sub(X) = \Sub_\!C(X) := \{\text{subobjects of $X$ in $\!C$}\},
\end{equation*}
partially ordered by (the partial order induced on equivalence classes by) $\setle$.

A category $\!C$ is \defn{$\kappa$-complete} if it has all $\kappa$-ary limits; a functor $\@F : \!C -> \!D$ between two $\kappa$-complete categories is \defn{$\kappa$-continuous} if it preserves all $\kappa$-ary limits.  In a $\kappa$-complete category, each subobject poset $\Sub(X)$ has $\kappa$-ary meets given by (wide) pullback; we denote meets of $A_i \setle X$ by either $\bigcap_i A_i$, or $\bigwedge_i A_i$, or (as pullbacks) $\prod_X (A_i)_i$.  For each $f : X -> Y$, the pullback map
\begin{align*}
\Sub(f) = f^* : \Sub(Y) -> \Sub(X)
\end{align*}
between subobject posets preserves $\kappa$-ary meets; we thus get a functor
\begin{align*}
\Sub : \!C^\op --> \!{\kappa{\bigwedge}Lat}
\end{align*}
to the category $\!{\kappa{\bigwedge}Lat}$ of $\kappa$-complete meet semilattices.
A $\kappa$-continuous functor $\@F : \!C -> \!D$ between $\kappa$-complete categories preserves monomorphisms, hence induces a map between subobject posets
\begin{align*}
\@F|\Sub_\!C(X) : \Sub_\!C(X) -> \Sub_\!D(\@F(X))
\end{align*}
for each $X \in \!C$, which also preserves $\kappa$-ary meets.

A finitely complete category $\!C$ is \defn{extensive} (see \cite{CLW}) if it has finite coproducts which are \defn{pullback-stable} and \defn{disjoint}, defined as follows.  A coproduct $\bigsqcup_{i \in I} X_i$ of objects $X_i \in \!C$ with cocone $(\iota_i : X_i -> \bigsqcup_{i \in I} X_i)_{i \in I}$ is \defn{pullback-stable} if for any morphism $f : Y -> \bigsqcup_{i \in I} X_i \in \!C$, the pullbacks $f^*(\iota_i) : f^*(X_i) -> Y$ of the morphisms $\iota_i$ along $f$, as in the diagram
\begin{equation*}
\begin{tikzcd}
f^*(X_i) \dar["f\vert f^*(X_i)"'] \rar["f^*(\iota_i)"] & Y \dar["f"] \\
X_i \rar["\iota_i"'] & \bigsqcup_i X_i,
\end{tikzcd}
\end{equation*}
together form a cocone which exhibits $Y$ as the coproduct $\bigsqcup_i f^*(X_i)$.  In particular, when $I = \emptyset$, this means we have an initial object $0 \in \!C$ which is \defn{strict}, meaning any $f : Y -> 0$ is an isomorphism.  A coproduct $\bigsqcup_i X_i$ is \defn{disjoint} if each of the cocone morphisms $\iota_i : X_i -> \bigsqcup_{i \in I} X_i$ is monic, and for any $i \ne j$, the pullback of $\iota_i, \iota_j$ is the initial object.  For extensivity, it is enough to consider nullary and binary coproducts, i.e., for there to be a strict initial object, as well as pullback-stable and disjoint binary coproducts.  More generally, we say that $\!C$ is \defn{$\kappa$-extensive} if it has pullback-stable and disjoint $\kappa$-ary coproducts (disjointness needs only be checked for binary coproducts).
A finitely continuous functor $\@F : \!C -> \!D$ between finitely complete ($\kappa$-)extensive categories is \defn{($\kappa$-)extensive} if it preserves finite (resp., $\kappa$-ary) coproducts.

A finitely complete category $\!C$ is \defn{subextensive} if it has a strict initial object $0$, which implies that every subobject poset $\Sub(X)$ has a least element $0 `-> X$, and any two subobjects $A, B \setle X$ which are \defn{disjoint} (meaning their meet is $0$) have a pullback-stable join in $\Sub(X)$.
(This notion is part of the definition of \defn{coherent} category, see e.g., \cite[A1.4]{Jeleph}, as well as an instance of Barr's \defn{effective unions} \cite{Bar}.)
We will denote the pairwise disjoint joins by $A \sqcup B$ (this is consistent with the above usage by \cref{thm:ext-coprod-join,thm:subext-join-coprod} below).  More generally, we say that $\!C$ is \defn{$\kappa$-subextensive} if any $\kappa$-ary family of pairwise disjoint $A_i \setle X$ has a pullback-stable join $\bigsqcup_i A_i$ in $\Sub(X)$.
A finitely continuous functor $\@F : \!C -> \!D$ between finitely complete ($\kappa$-)subextensive categories is \defn{($\kappa$-)subextensive} if it preserves the initial object and finite (resp., $\kappa$-ary) joins of disjoint subobjects.

The subobject posets $\Sub(X)$ in a finitely complete $\kappa$-subextensive category $\!C$ have a partial algebraic structure which we will refer to as a \defn{$\kappa$-disjunctive frame}%
\footnote{The terminology is motivated by \defn{$\kappa$-frames} \cite{Mad}.}
(or \defn{distributive disjunctive lattice} when $\kappa = \omega$), meaning a meet-semilattice with (greatest and) least element and joins of pairwise disjoint $\kappa$-ary families, over which finite meets distribute; distributivity in $\Sub(X)$ follows from pullback-stability.  The pullback maps $f^* : \Sub(Y) -> \Sub(X)$ are $\kappa$-disjunctive frame homomorphisms (giving a functor $\Sub : \!C^\op -> \!{\kappa DisjFrm}$ to the category of $\kappa$-disjunctive frames), as are the maps $\@F|\Sub_\!C(X) : \Sub_\!C(X) -> \Sub_\!D(\@F(X))$ induced by $\kappa$-subextensive functors $\@F : \!C -> \!D$.

The following facts are standard (see \cite[A1.4]{Jeleph}):

%

\begin{lemma}
\label{thm:ext-coprod-join}
A finitely complete $\kappa$-extensive category $\!C$ is $\kappa$-subextensive, with joins of pairwise disjoint subobjects given by their coproduct.  Hence, a finitely continuous functor between finitely complete $\kappa$-extensive categories is $\kappa$-extensive iff it is $\kappa$-subextensive.
\end{lemma}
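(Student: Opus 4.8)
The plan is to reduce all four assertions to one construction. Given a $\kappa$-ary family of pairwise disjoint subobjects $m_i : A_i `-> X$ in a finitely complete $\kappa$-extensive category $\!C$, form the coproduct $A := \bigsqcup_i A_i$ with coprojections $\iota_i : A_i -> A$, and let $m : A -> X$ be the unique map with $m \circ \iota_i = m_i$ for all $i$. The strict initial object required for $\kappa$-subextensivity is the nullary case of a pullback-stable coproduct, hence already available, so it remains to prove that (i) $m$ is monic, (ii) $m$ represents the join $\bigvee_i A_i$ in $\Sub_\!C(X)$, and (iii) this join is pullback-stable.

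Step (i) is the heart of the matter. I would compute the kernel pair of $m$: since $\kappa$-ary coproducts in $\!C$ are pullback-stable, pulling the decomposition $A = \bigsqcup_j A_j$ back along the relevant projection shows that binary pullbacks over $X$ distribute over such coproducts, so $A \times_X A \cong \bigsqcup_{i,j} (A_i \times_X A_j)$. For $i = j$ the summand $A_i \times_X A_i$ is $A_i$ itself with both projections $1_{A_i}$, since $m_i$ is monic; for $i \ne j$ it is the pullback of $m_i$ against $m_j$, which is the strict initial object by disjointness. Hence $A \times_X A \cong A$ with both projections equal to $1_A$, i.e.\ the diagonal $A -> A \times_X A$ is an isomorphism, which is exactly the assertion that $m$ is monic. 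I expect this to be the main obstacle: it is where the full force of the exactness axioms (pullback-stability of coproducts, strict initiality, disjointness) is used, whereas the other steps are formal bookkeeping. (This is the standard argument of \cite[A1.4]{Jeleph}, which I will spell out.)

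Step (ii) is then immediate from the universal property of $A = \bigsqcup_i A_i$: each $A_i \setle m$ via $\iota_i$, and if $B `-> X$ is a subobject with $A_i \setle B$ for every $i$, the factorizations $A_i -> B$ induce a map $A -> B$ over $X$, so $m \setle B$; thus $m = \bigvee_i A_i$, justifying the notation $\bigsqcup_i A_i$. For (iii), given $f : Y -> X$, I would pull $A = \bigsqcup_i A_i$ back along the projection $A \times_X Y -> A$; pullback-stability of that coproduct together with the pasting law for pullbacks identifies $f^*(A) = A \times_X Y$ with $\bigsqcup_i f^*(A_i)$ as a subobject of $Y$, and since $f^*$ preserves meets the $f^*(A_i)$ are again pairwise disjoint, so (i)--(ii) applied inside $\Sub_\!C(Y)$ recognize this coproduct as $\bigvee_i f^*(A_i)$. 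Hence $f^*(\bigsqcup_i A_i) = \bigsqcup_i f^*(A_i)$, and $\!C$ is $\kappa$-subextensive.

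For the final equivalence, let $\@F : \!C -> \!D$ be finitely continuous between finitely complete $\kappa$-extensive categories; being finitely continuous, $\@F$ automatically preserves monos, pullbacks, and hence binary meets of subobjects. If $\@F$ is $\kappa$-extensive it preserves all $\kappa$-ary coproducts, in particular the initial object; and given pairwise disjoint $A_i \setle X$, the $\@F(A_i) \setle \@F(X)$ are pairwise disjoint (as $\@F$ preserves the pullbacks $A_i \times_X A_j$ and the initial object), while $\@F$ applied to the mono $\bigsqcup_i A_i -> X$ representing $\bigvee_i A_i$ is — by the construction above, now carried out in $\!D$ — the mono $\bigsqcup_i \@F(A_i) -> \@F(X)$ representing $\bigvee_i \@F(A_i)$; so $\@F$ preserves the bottom subobject and disjoint $\kappa$-ary joins, i.e.\ is $\kappa$-subextensive. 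Conversely, if $\@F$ is $\kappa$-subextensive, then for any $\kappa$-ary coproduct $\bigsqcup_i X_i$ in $\!C$ the coprojections $\iota_i$ are pairwise disjoint monos whose join in $\Sub_\!C(\bigsqcup_i X_i)$ is $1_{\bigsqcup_i X_i}$ (by the construction above); applying $\@F$, the $\@F(\iota_i)$ are pairwise disjoint monos with $\bigvee_i \@F(\iota_i) = \@F(1) = 1_{\@F(\bigsqcup_i X_i)}$, and by the construction in $\!D$ this join is represented by the canonical comparison map $\bigsqcup_i \@F(X_i) -> \@F(\bigsqcup_i X_i)$. A monomorphism representing the top subobject is an isomorphism, so $\@F$ preserves $\kappa$-ary coproducts and is $\kappa$-extensive.
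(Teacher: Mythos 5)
Your proposal is correct and follows essentially the same route as the paper: the heart of both arguments is the kernel-pair computation $\bigl(\bigsqcup_i A_i\bigr) \times_X \bigl(\bigsqcup_i A_i\bigr) \cong \bigsqcup_{i,j} (A_i \times_X A_j)$ via pullback-stability, with diagonal summands $A_i$ and off-diagonal summands $0$, showing the induced map is monic, after which the join and its pullback-stability follow from the universal property. The only difference is that you spell out the ``hence'' clause about functors in more detail than the paper, which leaves it as an immediate consequence; your argument there is also correct.
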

\begin{proof}
Let $A_i \setle X \in \!C$ for $i \in I$, $\abs{I} < \kappa$, be pairwise disjoint.  It is enough to show that the morphism $h : \bigsqcup_i A_i -> X$ induced by the $A_i `-> X$ via the universal property of the coproduct is monic, for then the universal property easily implies that $\bigsqcup_i A_i$ is the join in $\Sub(X)$, and this join is pullback-stable since the coproduct $\bigsqcup_i A_i$ is.  For this, consider the lower-right pullback square in
\begin{equation*}
\begin{tikzcd}
p^*(A_i) \cap q^*(A_j) \dar[hook] \rar[hook] & q^*(A_j) \dar[hook] \rar & A_j \dar[hook] \\
p^*(A_i) \dar \rar[hook] & (\bigsqcup_i A_i) \times_X (\bigsqcup_i A_i) \dar["p"'] \rar["q"] & \bigsqcup_i A_i \dar["h"] \\
A_i \rar[hook] & \bigsqcup_i A_i \rar["h"'] & X.
\end{tikzcd}
\end{equation*}
By pullback-stability of $\bigsqcup_i A_i$, we may write $(\bigsqcup_i A_i) \times_X (\bigsqcup_i A_i)$ as the coproduct
\begin{align*}
(\bigsqcup_i A_i) \times_X (\bigsqcup_i A_i) = \bigsqcup_i p^*(A_i) = \bigsqcup_{i,j} (p^*(A_i) \cap q^*(A_j)).
\end{align*}
For each $i, j$, $p^*(A_i) \cap q^*(A_j)$ is the pullback of the composites $A_i, A_j \setle \bigsqcup_i A_i --->{h} X$, i.e., the inclusions $A_i, A_j `-> X$, hence is either the diagonal $A_i `-> (\bigsqcup_i A_i) \times_X (\bigsqcup_i A_i)$ if $i = j$, or $0$ if $i \ne j$.  So $(\bigsqcup_i A_i) \times_X (\bigsqcup_i A_i) = \bigsqcup_i A_i$, i.e., $h$ is monic.
\end{proof}

\begin{lemma}
\label{thm:subext-join-coprod}
Let $\!C$ be a finitely complete $\kappa$-subextensive category, $X \in \!C$, and $A_i \setle X$ be a $\kappa$-ary family of pairwise disjoint subobjects.  Then their join $\bigsqcup_i A_i$ in $\Sub(\!C)$ is their coproduct in $\!C$.
\end{lemma}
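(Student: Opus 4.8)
The plan is to verify the coproduct universal property directly. Write $S := \bigsqcup_i A_i \in \Sub(X)$ with structure monos $\iota_i : A_i \hookrightarrow S$; given an object $Y$ and a cocone $(f_i : A_i \to Y)_i$, I must produce a unique $f : S \to Y$ with $f \circ \iota_i = f_i$. I would first record two easy preliminaries. Since $S \hookrightarrow X$ is monic, pullbacks of subobjects of $S$ computed over $S$ agree with those computed over $X$; hence the $A_i$ are still pairwise disjoint in $\Sub(S)$, and their join there must be the top element $1_S$ (it is an upper bound of the $A_i$ in $\Sub(X)$, of which $S$ is the least). Consequently, because this join is pullback-stable and preimages preserve the strict initial object, any $w : W \to S$ pulls it back to a pairwise disjoint family $\{w^{-1}(A_i)\}_i$ with join $1_W$; any such family is jointly epic, since two maps out of $W$ agreeing on every $w^{-1}(A_i)$ have an equalizer containing $\bigsqcup_i w^{-1}(A_i) = 1_W$.

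The map $f$ is built from the cograph. Form $S \times Y$ with projections $\pi_S, \pi_Y$, and let $G_i \hookrightarrow S \times Y$ be the image of the mono $(\iota_i, f_i) : A_i \hookrightarrow S \times Y$ (monic since its $\pi_S$-composite is $\iota_i$). As $G_i \subseteq \pi_S^{-1}(A_i)$ and the $\pi_S^{-1}(A_i)$ are pairwise disjoint, so are the $G_i$; hence $G := \bigsqcup_i G_i \hookrightarrow S \times Y$ exists, and distributivity of meets over disjoint joins reduces $\pi_S^{-1}(A_j) \cap G$ to $\bigsqcup_i (\pi_S^{-1}(A_j) \cap G_i) = G_j$ for each $j$, since only the $i = j$ term is nonzero. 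The crux is the claim that $\phi := \pi_S|_G : G \to S$ is an isomorphism. Granting it, $f := (\pi_Y|_G) \circ \phi^{-1}$ works: since $\phi$ is iso with $\phi^{-1}(A_j) = G_j$, the composite $\phi^{-1} \circ \iota_j$ is the canonical isomorphism $A_j \xrightarrow{\sim} G_j$ followed by $G_j \hookrightarrow G$, so $f \circ \iota_j = \pi_Y \circ (\iota_j, f_j) = f_j$. Uniqueness is immediate: two competitors agree on each $A_i$, hence on $\bigsqcup_i A_i = 1_S$, by the equalizer argument above.

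The main obstacle is thus proving that $\phi = \pi_S|_G$ is an isomorphism. For monicity: given $u, v : W \rightrightarrows G$ with $\phi u = \phi v =: w$, pull the family $\{A_j \hookrightarrow S\}$ back along $w$ to the jointly epic family $\{w^{-1}(A_j) \hookrightarrow W\}$; on $w^{-1}(A_j)$ both $u$ and $v$ have $\phi$-composite factoring through $A_j \hookrightarrow S$, hence factor through $\phi^{-1}(A_j) = G_j$, and since $\phi|_{G_j} : G_j \to A_j$ is an isomorphism (inverse to the corestriction of $(\iota_j, f_j)$), they must agree there; joint epicity of the $w^{-1}(A_j)$ then gives $u = v$. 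For the surjective direction: once $\phi$ is monic, its image $\phi(G) \in \Sub(S)$ contains each $\phi(G_j) = A_j$, hence contains $\bigsqcup_i A_i = 1_S$, so $\phi(G) = 1_S$ and $\phi$ is an isomorphism. I expect the only real care needed is the bookkeeping around the nested monos $G_i \hookrightarrow G \hookrightarrow S \times Y$, the isomorphisms $A_i \cong G_i$, and invoking pullback-stability over the correct ambient object in each case; none of this should be deep.
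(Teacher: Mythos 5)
Your proof is correct and takes essentially the same route as the paper's: both construct the desired morphism via its graph $G = \bigsqcup_i G_i \subseteq (\bigsqcup_i A_i) \times Y$, where $G_i$ is the graph of $f_i$, and then verify that the projection of $G$ to $\bigsqcup_i A_i$ is an isomorphism. The only cosmetic differences are that you check monicity of $\pi_S|_G$ and uniqueness of $f$ by a generalized-element, jointly-epic-family argument, whereas the paper computes the kernel pair of $G \to \bigsqcup_i A_i$ explicitly and gets uniqueness from distributivity forcing $G = \bigsqcup_i G_i$.
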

\begin{proof}
Let $f_i : A_i -> Y \in \!C$; we must find a unique $f : \bigsqcup_i A_i -> Y$ such that $f|A_i = f_i$ for each $i$.  In any finitely complete category, morphisms $f : X -> Y$ are in bijection with their \defn{graphs} $(1_X, f) : X `-> X \times Y$, which are subobjects $G \setle X \times Y$ whose composite with the first projection $X \times Y -> X$ is an isomorphism.  For our desired $f : \bigsqcup_i A_i -> Y$, letting $G \setle (\bigsqcup_i A_i) \times Y$ be its graph, the condition $f|A_i = f_i$ is easily seen to be equivalent to $G \cap (A_i \times Y)$ being the graph $G_i$ of $f_i$ for each $i$.  Since $(\bigsqcup_i A_i) \times Y = \bigsqcup_i (A_i \times Y)$ by pullback-stability of the join $\bigsqcup_i A_i$ (under the projection $(\bigsqcup_i A_i) \times Y -> \bigsqcup_i A_i$), we must have (by distributivity) $G = G \cap \bigsqcup_i (A_i \times Y) = \bigsqcup_i (G \cap (A_i \times Y)) = \bigsqcup_i G_i$, which determines $G$ as a subobject of $(\bigsqcup_i A_i) \times Y$.  It remains only to check that $G$ is a graph.  The composite $G `-> (\bigsqcup_i A_i) \times Y -> \bigsqcup_i A_i$ is monic, since its pullback with itself is the subobject $p^*(G) \cap q^*(G) \setle (\bigsqcup_i A_i) \times Y \times Y$ in the pullback diagram
\begin{equation*}
\begin{tikzcd}
p^*(G) \cap q^*(G) \dar[hook] \rar[hook] & q^*(G) \dar[hook] \rar & G \dar[hook] \\
p^*(G) \dar \rar[hook] & (\bigsqcup_i A_i) \times Y \times Y \dar["p"'] \rar["q"] & (\bigsqcup_i A_i) \times Y \dar \\
G \rar[hook] & (\bigsqcup_i A_i) \times Y \rar & \bigsqcup_i A_i,
\end{tikzcd}
\end{equation*}
and we have $p^*(G) \cap q^*(G) = \bigsqcup_{i,j} (p^*(G_i) \cap q^*(G_j))$ (by pullback-stability of the join $G = \bigsqcup_i G_i$ and distributivity) which is easily seen to be the diagonal of $G = \bigsqcup_i G_i$ using that each $G_i$ is a graph and their domains $A_i \setle \bigsqcup_i A_i$ are disjoint.  And the composite $G `-> (\bigsqcup_i A_i) \times Y -> \bigsqcup_i A_i$ as a subobject is all of $\bigsqcup_i A_i$, since it contains each $A_i$, since $G$ contains $G_i$ and $G_i$ is a graph.
\end{proof}

\begin{corollary}
\label{thm:subext-ext}
A finitely complete $\kappa$-subextensive category $\!C$ in which every $\kappa$-ary family of objects $X_i$ jointly embed as pairwise disjoint subobjects of some other object $Y$ is $\kappa$-extensive, with the coproduct $\bigsqcup_i X_i$ given by the join in $\Sub(Y)$.  \qed
\end{corollary}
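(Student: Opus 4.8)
The plan is to read this off from \cref{thm:subext-join-coprod} together with a direct check of the $\kappa$-extensivity axioms. First I would fix a $\kappa$-ary family $(X_i)_{i \in I}$ and use the hypothesis to obtain monomorphisms realizing each $X_i$ as a pairwise disjoint subobject $A_i \setle Y$ of a common object $Y$. Since $\!C$ is $\kappa$-subextensive, the join $\bigsqcup_i A_i \setle Y$ exists and is pullback-stable, and \cref{thm:subext-join-coprod} identifies it with the coproduct of the $A_i$ in $\!C$; transporting along the isomorphisms $X_i \cong A_i$ gives a coproduct of the $X_i$. So $\!C$ has all $\kappa$-ary coproducts --- the nullary case recovering the strict initial object that comes with $\kappa$-subextensivity --- and what remains is to verify disjointness and pullback-stability of these coproducts.

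Disjointness should be immediate: each $A_i$ is a genuine subobject of $\bigsqcup_i A_i$, so the coprojection $\iota_i$ is monic, and because $\bigsqcup_i A_i \setle Y$ is also monic, the pullback of $\iota_i$ against $\iota_j$ over $\bigsqcup_i A_i$ coincides with their pullback over $Y$, namely the meet $A_i \cap A_j$ in $\Sub(Y)$, which is $0$ for $i \ne j$. Since binary coproducts are unique and, by hypothesis, any two objects do embed disjointly into some object, I conclude that every binary coproduct in $\!C$ is disjoint, which is all disjointness requires.

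For pullback-stability I would take an arbitrary $f : Z -> \bigsqcup_i A_i$ and let $g : Z -> Y$ be its composite with $\bigsqcup_i A_i \setle Y$. The key observations are that $f^*(A_i) = g^*(A_i)$ as subobjects of $Z$, represented by the monic $f^*(\iota_i) : f^*(A_i) -> Z$, while $g^*(\bigsqcup_i A_i)$ is the top subobject of $Z$ because $g$ factors through $\bigsqcup_i A_i$. Pullback-stability of the join $\bigsqcup_i A_i$ in $\Sub(Y)$ along $g$ then yields $\bigsqcup_i f^*(A_i) = g^*(\bigsqcup_i A_i) = Z$ in $\Sub(Z)$, with the $f^*(A_i)$ pairwise disjoint, and \cref{thm:subext-join-coprod} applied once more exhibits $Z$ with the cocone $(f^*(\iota_i))_i$ as the coproduct $\bigsqcup_i f^*(A_i)$ --- exactly pullback-stability of $\bigsqcup_i A_i$ as a coproduct. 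This gives that $\!C$ is $\kappa$-extensive, and by construction $\bigsqcup_i X_i$ is computed as the join in $\Sub(Y)$. I do not expect a genuine obstacle: \cref{thm:subext-join-coprod} has already done the substantive work, and the only care needed is the routine identification of pullbacks of coprojections with restrictions of subobjects and the transfer of pullback-stability from $\Sub(Y)$ to the coproduct.
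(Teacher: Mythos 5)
Your argument is correct and is precisely the verification the paper leaves implicit: the corollary is stated with a bare \qed, being regarded as immediate from \cref{thm:subext-join-coprod} once one notes that disjointness and pullback-stability of the resulting coproducts follow from the corresponding properties of the joins in $\Sub(Y)$. Your careful identifications ($f^*(A_i) = g^*(A_i)$ via monicity of $\bigsqcup_i A_i \setle Y$, and $g^*(\bigsqcup_i A_i) = Z$ since $g$ factors through it) are exactly the routine steps the paper omits.
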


In a distributive disjunctive lattice $A$, with least and greatest elements $\bot, \top$ and meets and disjoint joins denoted $\wedge, \vee$, we may already define the familiar (in distributive lattices) notion of \defn{complement} of $a \in A$, i.e., any $b \in A$ such that $a \wedge b = \bot$ and $a \vee b = \top$.  Complements are unique, since if $b, c$ are complements of $a$, then $b = b \wedge (a \vee c) = (b \wedge a) \vee (b \wedge c) = b \wedge c \le c$.  So we may denote the complement of $a$, if it exists, by $\neg a$; and complements are automatically preserved by distributive disjunctive lattice homomorphisms.

\begin{lemma}
A distributive disjunctive lattice $A$ with all complements is a Boolean algebra.
\end{lemma}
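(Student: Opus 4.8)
The plan is in two steps: first produce arbitrary binary (hence finite) joins in $A$, turning it into a bounded lattice; then verify distributivity, at which point $A$ is a bounded distributive lattice in which every element has a complement, i.e.\ a Boolean algebra (with $\neg$ well-defined by the uniqueness of complements already observed).

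For the first step, given $a, b \in A$, I would note that $a$ and $b \wedge \neg a$ are disjoint, since $a \wedge (b \wedge \neg a) = (a \wedge \neg a) \wedge b = \bot$; hence the disjoint join $a \vee (b \wedge \neg a)$ exists, and I claim it is the least upper bound of $\{a, b\}$. It dominates $a$ trivially, and it dominates $b$ because $b = b \wedge \top = b \wedge (a \vee \neg a) = (b \wedge a) \vee (b \wedge \neg a)$ by distributivity of $\wedge$ over the disjoint join $a \vee \neg a$, together with $(b \wedge a) \vee (b \wedge \neg a) \le a \vee (b \wedge \neg a)$, which holds because a disjoint join, being a genuine supremum, is monotone in each argument (here $b \wedge a \le a$). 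Conversely, any common upper bound $c$ of $a$ and $b$ also dominates $b \wedge \neg a \le b$, hence dominates the supremum $a \vee (b \wedge \neg a)$. So $a \vee b$ exists; by induction all finite joins exist and $A$ is a bounded lattice.

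For distributivity, I would fix $a, b, c$ and pass to the common disjoint refinement: set $r_1 = b \wedge c$, $r_2 = b \wedge \neg c$, $r_3 = \neg b \wedge c$, which are pairwise disjoint. Distributivity over disjoint joins gives $b = r_1 \vee r_2$ and $c = r_1 \vee r_3$, and comparing suprema in both directions yields $b \vee c = r_1 \vee r_2 \vee r_3$. Meeting with $a$ and distributing over this disjoint ternary join gives $a \wedge (b \vee c) = (a \wedge r_1) \vee (a \wedge r_2) \vee (a \wedge r_3)$; on the other hand $a \wedge b = (a \wedge r_1) \vee (a \wedge r_2)$ and $a \wedge c = (a \wedge r_1) \vee (a \wedge r_3)$, and the supremum of these two is again $(a \wedge r_1) \vee (a \wedge r_2) \vee (a \wedge r_3)$. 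Hence $a \wedge (b \vee c) = (a \wedge b) \vee (a \wedge c)$, completing the proof.

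The one point requiring care throughout — and the only real obstacle — is bookkeeping about the restricted meaning of $\vee$: in a disjunctive lattice it a priori denotes only the supremum of a pairwise disjoint family, so before each use of $\vee$ one must verify disjointness, and until step one is complete, inequalities between joins can only be justified via the universal property of suprema rather than via general lattice identities. Once this discipline is observed, no genuine difficulty arises.
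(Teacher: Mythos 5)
Your proof is correct and follows essentially the same route as the paper's: both hinge on realizing the binary join as the disjoint join $a \vee (\neg a \wedge b)$ and verifying via distributivity over disjoint joins that it is the least upper bound. The only difference is in the distributivity step, where the paper gets by with the one-line estimate $c \wedge (a \vee b) = (c \wedge a) \vee (c \wedge \neg a \wedge b) \le (c \wedge a) \vee (c \wedge b)$ (the reverse inequality being automatic from the universal property of the join), while your three-piece disjoint refinement $b \wedge c$, $b \wedge \neg c$, $\neg b \wedge c$ is a slightly longer but equally valid computation.
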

\begin{proof}
Let $a, b \in A$.  Then $a \wedge \neg a \wedge b = \bot$, so the join $a \vee (\neg a \wedge b)$ exists, and meets distribute over it.  Clearly, $a \le a \vee (\neg a \wedge b)$, and for any $c \ge a, b$, we have $c \ge a, \neg a \wedge b$ whence $c \ge a \vee (\neg a \wedge b)$; we also have
$b \wedge (a \vee (\neg a \wedge b))
= (b \wedge a) \vee (b \wedge \neg a)
= b \wedge (a \vee \neg a) = b$,
i.e., $b \le a \vee (\neg a \wedge b)$, whence $a \vee (\neg a \wedge b)$ is the join $a \vee b$ of $a, b$.  For any $c \in A$, we have
$c \wedge (a \vee b)
= c \wedge (a \vee (\neg a \wedge b))
= (c \wedge a) \vee (c \wedge \neg a \wedge b)
\le (c \wedge a) \vee (c \wedge b)$,
so $A$ is a distributive lattice.
\end{proof}

We call a finitely complete $\kappa$-subextensive category $\!C$ \defn{Boolean} if each of its subobject $\kappa$-disjunctive frames $\Sub(X)$ is a ($\kappa$-complete) Boolean algebra.  (We will discuss $\kappa$-complete Boolean algebras in more detail in the next section.)  For such $\!C$, the pullback maps between subobject lattices are ($\kappa$-)Boolean homomorphisms; and a finitely continuous $\kappa$-(sub)extensive functor $\@F : \!C -> \!D$ between two such $\!C, \!D$ preserves complements of subobjects.  Note also that if $\!C$ is $\kappa$-complete Boolean subextensive, then it is $\kappa$-subextensive, since its subobject Boolean algebras have $\kappa$-ary meets, hence $\kappa$-ary joins (which are pullback-stable, since $\kappa$-ary meets and complements are).

\begin{lemma}
\label{thm:klimbext-kext}
A $\kappa$-complete Boolean extensive category $\!C$ is $\kappa$-extensive.
\end{lemma}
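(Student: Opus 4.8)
The plan is to reduce $\kappa$-extensivity to the already-established finite case plus $\kappa$-subextensivity via \cref{thm:subext-ext}. First I would note that, as observed just before the statement, a $\kappa$-complete Boolean (sub)extensive category is automatically $\kappa$-subextensive: each $\Sub(X)$ is a $\kappa$-complete Boolean algebra, so $\kappa$-ary joins exist and are computed as complements of $\kappa$-ary meets; these are pullback-stable because pullback maps $f^*$ are $\kappa$-Boolean homomorphisms (they preserve the $\kappa$-ary meets coming from $\kappa$-completeness, and they preserve complements). So the only thing missing from the hypothesis of \cref{thm:subext-ext} is that an arbitrary $\kappa$-ary family $(X_i)_{i \in I}$ of objects of $\!C$ should jointly embed as pairwise disjoint subobjects of some common object $Y$.

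To produce such a $Y$, I would first handle the binary (hence finite) case using ordinary extensivity, then iterate up to $\kappa$. Concretely: fix a bijection of $I$ with an ordinal $\lambda < \kappa$ and build, by transfinite recursion on $\alpha \le \lambda$, objects $Y_\alpha$ together with compatible disjoint embeddings $X_\beta `-> Y_\alpha$ for $\beta < \alpha$. At successor stages set $Y_{\alpha+1} := Y_\alpha \sqcup X_\alpha$ using the finite coproduct from extensivity; the cocone inclusions are monic and have initial-object pullback by disjointness of the binary coproduct, and the previously-constructed $X_\beta `-> Y_\alpha `-> Y_\alpha \sqcup X_\alpha$ stay disjoint from one another and from $X_\alpha$ (the latter because $Y_\alpha \cap X_\alpha = 0$ in $Y_{\alpha+1}$ by disjointness of the coproduct, and meets with $0$ are $0$). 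At limit stages $\alpha$ one needs a colimit of the chain $(Y_\beta)_{\beta < \alpha}$, which is a $\kappa$-ary colimit but \emph{not} a priori a limit; however, one can avoid general $\kappa$-ary colimits: instead of building a chain, directly form $Y := \bigsqcup_{i \in I} X_i$ as a $\kappa$-ary coproduct — but that is exactly what we are trying to establish exists. So the cleaner route is: do \emph{not} form the coproduct of the $X_i$ directly; rather, embed everything into a single fixed object constructed from $\kappa$-completeness.

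Here is the device I expect to use. From the strict initial object $0$ and its complement $1 = 0 \sqcup 1$ we get, inside $\Sub(1^I) = \Sub(\prod_{i \in I} 1)$ — wait, $1^I = 1$ — that does not help. Instead, observe that extensivity gives $2 := 1 \sqcup 1$, hence by $\kappa$-completeness the power $2^{\lambda}$ exists for $\lambda < \kappa$; the $\lambda$-many ``coordinate'' subobjects $\{x : x_\beta = 1,\ x_{\gamma} = 0\ (\gamma \neq \beta)\} \setle 2^\lambda$ are pairwise disjoint (their pairwise meets force $x_\beta = 1 = 0$, i.e.\ factor through $0$, using that $1 `-> 2$ and its complement have initial pullback). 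Each such coordinate subobject is isomorphic to $1$. This shows $\lambda$ copies of $1$ embed disjointly into $2^\lambda$ — but we need $\lambda$ \emph{arbitrary} objects $X_i$, not copies of $1$. To fix this, take $Y := \prod_{i \in I} (X_i \sqcup 1)$, which exists by $\kappa$-completeness, where $X_i \sqcup 1$ uses the binary coproduct from extensivity (and note $X_i `-> X_i \sqcup 1$ is a complemented subobject with complement $1$). Then for each $i$, let $Z_i \setle Y$ be the pullback of $X_i `-> X_i \sqcup 1$ along the $i$-th projection, intersected with the pullbacks of $1 `-> X_j \sqcup 1$ along the $j$-th projections for all $j \neq i$. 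Distributivity of $\kappa$-ary products over the binary coproducts $X_j \sqcup 1$ (which follows from pullback-stability, already available from finite extensivity for each single coordinate, applied coordinatewise) identifies $Z_i \cong X_i$, and for $i \neq i'$ the meet $Z_i \cap Z_{i'}$ forces, in the $i$-th coordinate, a factorization through $X_i \cap 1 = 0$, hence $Z_i \cap Z_{i'} = 0$. Thus the $X_i$ embed as pairwise disjoint subobjects of $Y$, and \cref{thm:subext-ext} finishes the proof.

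The main obstacle is the coordinatewise distributivity step: I must be careful that ``$\kappa$-ary products distribute over binary coproducts in a single coordinate'' really is available from the hypotheses. It is: a $\kappa$-ary product $\prod_i W_i$ with one $W_{i_0} = U \sqcup V$ can be written as the pullback over $W_{i_0}$ of $\prod_{i \neq i_0} W_i \times W_{i_0}$, and pullback-stability of the binary coproduct $U \sqcup V$ (part of ordinary extensivity) gives $\prod_i W_i = (\prod_{i} W_i|_{W_{i_0} = U}) \sqcup (\prod_i W_i|_{W_{i_0}=V})$; more simply, binary coproducts commute with $\kappa$-ary products in one variable because the binary coproduct functor $(-) \sqcup V$ is, by extensivity, equivalent to the slice-category equivalence $\!C \simeq \!C/U \times \!C/V$ composed with... — in any case this is the standard fact that in an extensive category finite coproducts distribute over all existing limits, and I would cite or quickly reprove it. Everything else (monicity of the structure maps, the pairwise-disjointness computations, pullback-stability of the resulting join) is routine bookkeeping of the kind already carried out in the proofs of \cref{thm:ext-coprod-join,thm:subext-join-coprod}.
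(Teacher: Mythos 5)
Your final construction is exactly the paper's: embed each $X_i$ as a subobject of $\prod_{j}(X_j \sqcup 1)$ using the coproduct injection $X_i \hookrightarrow X_i \sqcup 1$ in the $i$-th coordinate and the point $1 \hookrightarrow X_j \sqcup 1$ elsewhere, check pairwise disjointness from disjointness of the binary coproducts, and conclude via \cref{thm:subext-ext} and the already-noted $\kappa$-subextensivity; this is correct. The one step you flag as a potential obstacle --- coordinatewise distributivity to identify $Z_i \cong X_i$ --- is not actually needed: $Z_i = \bigcap_j \pi_j^*(A_{ij})$ with $A_{ii} = X_i$ and $A_{ij} = 1$ is just the product of the subobjects $A_{ij} \le X_j \sqcup 1$, i.e.\ $X_i \times \prod_{j \neq i} 1 \cong X_i$ by pure limit theory (the paper sidesteps even this by writing down the map $f_i = (f_{ij})_j$ directly and observing that $\pi_i \circ f_i$ is monic).
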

\begin{proof}
Since $\!C$ is $\kappa$-subextensive, by \cref{thm:subext-ext}, it is enough to show that any $\kappa$-ary family of objects $X_i \in \!C$ can be jointly embedded as pairwise disjoint subobjects of some other object.  Consider $f_i := (f_{ij})_j : X_i -> \prod_j (X_j \sqcup 1)$ for each $i$, where $f_{ii} : X_i `-> X_i \sqcup 1$ is the coproduct injection and $f_{ij} : X_i -> 1 `-> X_j \sqcup 1$ for $j \ne i$.  Each $f_i$ is monic, since its composition with the $i$th projection $\pi_i : \prod_j (X_j \sqcup 1) -> X_i \sqcup 1$ is $f_{ii} : X_i `-> X_i \sqcup 1$; and for $i \ne j$, we have a commutative diagram
\begin{equation*}
\begin{tikzcd}
X_i \dar[equal] \rar["f_i"] & \bigsqcup_k (X_k \sqcup 1) \dar["\pi_i"] & X_j \lar["f_j"'] \dar \\
X_i \rar[hook] & X_i \sqcup 1 & 1 \lar[hook]
\end{tikzcd}
\end{equation*}
whence the pullback of the top row maps into the pullback of the bottom row which is $0$.
\end{proof}

The natural framework for studying categories equipped with algebraic structure (e.g., extensive categories and the above variations) is that of 2-categories.  We will quickly review the basic notions needed in this paper; see \cite[I~Ch.~7]{Bor}, \cite[B1.1]{Jeleph} for more comprehensive background.

A \defn{2-category}, denoted by a Fraktur symbol like $\&C$, consists of objects $\!X \in \&C$, together with for each pair of objects $\!X, \!Y \in \&C$ a \defn{hom-category} $\&C(\!X, \!Y)$, whose objects $\@F \in \&C(\!X, \!Y)$ are called \defn{morphisms} $\@F : \!X -> \!Y$ of $\&C$ and morphisms $\theta : \@F -> \@G \in \&C(\!X, \!Y)$ are called \defn{2-cells} $\theta : \@F -> \@G : \!X -> \!Y$ of $\&C$, together with for each $\!X \in \&C$ an \defn{identity morphism} $1_\!X \in \&C(\!X, \!X)$ and for each $\!X, \!Y, \!Z \in \&C$ a \defn{composition functor} $\&C(\!Y, \!Z) \times \&C(\!X, \!Y) -> \&C(\!X, \!Z)$, obeying the usual asosciaitivity and unitality laws (on the nose, not just up to natural isomorphism of functors).  A \defn{(strict) 2-functor} between 2-categories is a map taking objects to objects, morphisms to morphisms, and 2-cells to 2-cells (with the obvious endpoint compatibility conditions), which is a functor on each hom-category and also preserves the global composition and identity.

The quintessential example of a 2-category is $\&{Cat}$, whose objects are (small)%
\footnote{We will generally ignore size issues, which can be dealt with via standard tricks, since all the algebraic structures we consider will be bounded in arity by $\kappa$.}
categories, morphisms are functors, and 2-cells are natural transformations; that is, each hom-category $\&{Cat}(\!C, \!D)$ is the functor category $\!D^\!C$.  We will consider the following sub-2-categories of $\&{Cat}$, consisting of categories with algebraic structure and functors preserving said structure.
Let $\kappa\&{LimCat} \subseteq \&{Cat}$ be the sub-2-category of $\kappa$-complete categories, $\kappa$-continuous functors, and (all) natural transformations.
Let
\begin{equation*}
\kappa\&{Lim(B)(\kappa)(S)ExtCat} \subseteq \kappa\&{LimCat} \subseteq \&{Cat}
\end{equation*}
be the further sub-2-categories of $\kappa$-complete (Boolean) ($\kappa$-)(sub)extensive categories, $\kappa$-continuous ($\kappa$-)(sub)extensive functors, and natural transformations.  For example, $\kappa\&{LimBSExtCat}$ is the 2-category of $\kappa$-complete Boolean subextensive categories and $\kappa$-continuous subextensive functors.  These are related as follows:
\begin{equation*}
\begin{tikzcd}
\kappa\&{LimB\kappa ExtCat} \ar[dd, tail] \ar[dr, equal] \ar[rr, tail] &[-4em]&[-4em] \kappa\&{LimB\kappa SExtCat} \ar[dd, tail] \ar[dr, equal] &[-4em] \\
& \kappa\&{LimBExtCat} \ar[rr, tail, crossing over] && \kappa\&{LimBSExtCat} \ar[dd, tail] \\
\kappa\&{Lim\kappa ExtCat} \ar[dr, hook] \ar[rr, tail] && \kappa\&{Lim\kappa SExtCat} \ar[dr, hook] \\
& \kappa\&{LimExtCat} \ar[uu, leftarrowtail, crossing over] \ar[rr, tail] && \kappa\&{LimSExtCat} \rar[hook] & \kappa\&{LimCat} \rar[hook] & \&{Cat}
\end{tikzcd}
\end{equation*}
The $\rightarrowtail$'s denote \defn{full} sub-2-categories, i.e., the sub-2-category includes all functors in the bigger 2-category between two objects in the sub-2-category.  For the horizontal $\rightarrowtail$'s, fullness is by \cref{thm:ext-coprod-join}.  For the vertical $\rightarrowtail$'s, fullness is by definition (subextensive functors automatically preserve complements of subobjects).  The diagonal equalities in the top layer are by \cref{thm:klimbext-kext} and the sentence preceding it; we will use the shorter names for these two 2-categories, which are the focus of this paper.

Because categorical structure (e.g., limits) is often only defined up to canonical isomorphism, the ``correct'' 2-categorical analog of a (1-)categorical notion often involves weakening a naive analog by replacing some equalities with coherent isomorphisms.  For example, the notion of (strict) 2-functor from above, while useful, is not general enough to include many naturally occurring examples, and must be weakened to the notion of \defn{pseudofunctor} (where composition is preserved only up to coherent isomorphism; see \cite[I~7.5]{Bor}, \cite[B1.1]{Jeleph}).  Similarly, the universal property of limits and colimits must be weakened by relaxing the uniqueness condition in order to arrive at what are commonly called \defn{bi(co)limits} (see \cite[I~7.4]{Bor}).  Indeed, it is known that 2-categories of categories with ``nice'' algebraic structure (such as those above) generally lack colimits in the naive (strict) sense, although they admit all bicolimits (see \cite{BKP}).

In this paper, we will only need one simple instance of a non-strict 2-categorical notion: that of a \defn{(bi-)initial object} (we henceforth drop the ``bi'' prefix) $\!X$ in a 2-category $\&C$, which is an object such that for any other $\!Y \in \&C$, the hom-category $\&C(\!X, \!Y)$ is equivalent to the terminal category.  This means that for any other $\!Y$, there is a morphism $\!X -> \!Y$, and for any two such morphisms $\@F, \@G : \!X -> \!Y$, there is a unique 2-cell $\@F -> \@G$ (which must thus be an isomorphism).  Equivalently, this means $\!X$ admits a morphism to any other object, and this morphism is unique up to unique (2-cell) isomorphism and has no non-identity 2-cells to itself.  In particular, an initial object $\!X \in \&C$ is unique up to equivalence in $\&C$ (with the equivalence being itself unique up to unique isomorphism).

For example, an initial object in $\kappa\&{LimBExtCat}$, as provided by \cref{thm:intro-kboolk-init}, is a $\kappa$-complete Boolean extensive category $\!C$, such that for any other $\kappa$-complete Boolean extensive category $\!D$, there is a $\kappa$-continuous extensive functor $\@F : \!C -> \!D$, which is unique up to unique natural isomorphism, and admits no non-identity natural transformations $\@F -> \@F$.  Such $\!C$, if it exists, is unique up to (unique-up-to-unique-isomorphism) equivalence of categories.

\section{$\kappa$-Boolean algebras}
\label{sec:kbool}

For general background on Boolean algebras, see e.g., \cite{Sik}.  We denote Boolean operations by $\wedge, \vee, \neg, \top, \bot$, and also use the implication operation $a -> b := \neg a \vee b$ and bi-implication $a <-> b := (a -> b) \wedge (b -> a)$.  We will never denote $\top, \bot$ by $1, 0$ (which are reserved for generating elements of free algebras over finite cardinals).

A \defn{$\kappa$-(complete )Boolean algebra} is a Boolean algebra with $\kappa$-ary joins (hence $\kappa$-ary meets).  Let $\kappa\!{Bool}$ denote the category of $\kappa$-Boolean algebras and ($\kappa$-join-preserving) homomorphisms.  $\kappa$-Boolean algebras are defined by a $\kappa$-ary (infinitary) algebraic theory, whence $\kappa\!{Bool}$ is a well-behaved ``algebraic'' (i.e., monadic over $\!{Set}$) category, in particular having all small limits and colimits (see e.g., \cite[II~4.3]{Bor}).
We will denote binary coproducts in $\kappa\!{Bool}$ by $A \otimes B$, and more generally, the pushout of $B, C$ over $A$ by $B \otimes_A C$.%
\footnote{This is only by analogy with rings, unless $\kappa = \omega$ in which case we can identify Boolean algebras with Boolean rings and take the usual tensor product (= coproduct) of commutative rings.}

For each set $X$, let $\@K(X)$ denote the free $\kappa$-Boolean algebra generated by $X$.  We identify the elements of $X$ with the generators in $\@K(X)$, so that $X \setle \@K(X)$.  For a map $f : X -> Y$, let $\@K(f) = f_* : \@K(X) -> \@K(Y)$ be the $\kappa$-Boolean extension of $f$.  Thus $\@K : \!{Set} -> \kappa\!{Bool}$ is the free $\kappa$-Boolean algebra functor, left adjoint to the forgetful functor $\kappa\!{Bool} -> \!{Set}$.

Let $\kappa\!{Bool}_\kappa \setle \kappa\!{Bool}$ denote the full subcategory of \defn{$\kappa$-presented} $\kappa$-Boolean algebras, i.e., $\kappa$-Boolean algebras (isomorphic to ones) of the form $\@K(X)/{\sim}$ for some $X \in \!{Set}_\kappa$ and some $\kappa$-generated $\kappa$-Boolean algebra congruence relation ${\sim} \setle \@K(X)^2$.

The following algebraic fact is standard:

\begin{lemma}
\label{thm:alexandrov}
Let $A$ be a $\kappa$-presented $\kappa$-Boolean algebra, and $X \setle A$ be $<\kappa$-many generators.  Then $A$ has a $\kappa$-ary presentation using only the generators in $X$, i.e., the congruence kernel of the canonical homomorphism $p : \@K(X) ->> A$ is $\kappa$-generated.
\end{lemma}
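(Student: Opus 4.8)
The plan is to reduce the statement to a standard cardinality/closure argument about the sub-$\kappa$-Boolean-algebra generated by $X$. Write $A \cong \@K(Y)/{\sim}$ for some $Y \in \!{Set}_\kappa$ and some $\kappa$-generated congruence ${\sim}$, so $A$ is $\kappa$-presented \emph{via} the generating set $Y$; I want to transfer this to the given generating set $X \setle A$. Let $p : \@K(X) ->> A$ be the canonical surjection and let ${\approx} \setle \@K(X)^2$ be its congruence kernel; I must show $\approx$ is $\kappa$-generated.

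The key step is to choose, for each of the $<\kappa$-many generators $y \in Y$, a term $t_y \in \@K(X)$ with $p(t_y) = q(y)$, where $q : \@K(Y) ->> A$ is the other presenting map (possible since $p$ is surjective); this induces a homomorphism $s : \@K(Y) -> \@K(X)$ with $p \circ s = q$. Now form a candidate generating set for $\approx$ in two pieces. First, since $\sim$ is $\kappa$-generated, pick $<\kappa$-many pairs $(u_i, v_i)$ generating it and throw in the pairs $(s(u_i), s(v_i)) \in \@K(X)^2$. Second, since $q$ is surjective, for each $x \in X$ (there are $<\kappa$ many) pick $w_x \in \@K(Y)$ with $q(w_x) = x = p(x)$, and throw in the pairs $(s(w_x), x) \in \@K(X)^2$. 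Let ${\approx_0} \setle {\approx}$ be the congruence generated by these $<\kappa$-many pairs; I claim ${\approx_0} = {\approx}$. The inclusion $\approx_0 \subseteq \approx$ holds because $p$ identifies each listed pair (for the first kind, $p(s(u_i)) = q(u_i) = q(v_i) = p(s(v_i))$ as $(u_i,v_i) \in {\sim} = \ker q$; for the second, $p(s(w_x)) = q(w_x) = x = p(x)$). For the reverse inclusion, I would show that the composite $\@K(X) --->{p} A \xrightarrow{\cong} \@K(Y)/{\sim}$ factors through $\@K(X)/{\approx_0}$ and is injective on it — concretely, exhibit a homomorphism $\@K(Y) -> \@K(X)/{\approx_0}$ (namely $q$ followed by the section-like map $s$, which is well-defined on the quotient because the $(s(u_i),s(v_i))$ were added), check it is inverse to the map $\@K(X)/{\approx_0} -> \@K(Y)/{\sim}$ induced by $s$ — here the pairs $(s(w_x), x)$ are exactly what is needed to see that going $\@K(X)/{\approx_0} -> \@K(Y)/{\sim} -> \@K(X)/{\approx_0}$ is the identity on each generator $x$. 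Once $\@K(X)/{\approx_0} \cong \@K(Y)/{\sim} \cong A$ compatibly with $p$, we get $\approx = \approx_0$, which is $\kappa$-generated.

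The main obstacle is purely bookkeeping: verifying that the two induced maps between $\@K(X)/{\approx_0}$ and $\@K(Y)/{\sim}$ are mutually inverse, which amounts to checking agreement on generators and using that the generating pairs of $\approx_0$ were chosen precisely to make both round-trips act as the identity on generators. There is also a small well-definedness point — that $s$ descends to $\@K(X)/{\approx_0}$ from $\@K(Y)$, resp. that the section descends — but this is immediate from how $\approx_0$ was defined. No set-theoretic subtlety beyond $\kappa$ being regular (so that a $<\kappa$-sized union of the chosen data is still $<\kappa$) is needed; indeed this is the only place regularity of $\kappa$ enters. One could alternatively package the argument as: the functor $\@K(-)$ preserves $\kappa$-small colimits and "$\kappa$-presented" is a colimit-closure condition, so any two $\kappa$-small generating sets for the same object yield $\kappa$-generated kernels — but the explicit section argument above is more self-contained and is what I would write out.
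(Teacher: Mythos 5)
Your proposal is correct and follows essentially the same route as the paper: both choose a lift $s$ (the paper's $v$) of the generators $Y$ along $p$ and a lift $x \mapsto w_x$ (the paper's $u$) of the generators $X$ along $q$, and generate the kernel of $p$ by exactly the same two families of pairs, namely the $s$-images of the generating relations of $\sim$ together with $x \approx s(w_x)$. The mutual-inverse verification you sketch is precisely the content of the step the paper dismisses as ``straightforward to see.''
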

\begin{proof}
Let $q : \@K(Y) ->> \@K(Y)/{\sim} \cong A$ be some $\kappa$-ary presentation, with ${\sim} = \ker(q)$ $\kappa$-generated, say by pairs $s_i \sim t_i$ for $i \in I$ ($\abs{I} < \kappa$) where $s_i, t_i \in \@K(Y)$.  For each $x \in X$, pick some $u(x) \in q^{-1}(x) \setle \@K(Y)$, and for each $y \in Y$, pick some $v(y) \in p^{-1}(y) \setle \@K(X)$.  Extend $u, v$ to homomorphisms $u : \@K(X) -> \@K(Y)$ and $v : \@K(Y) -> \@K(X)$.  Then it is straightforward to see that $\ker(p) = {\approx}$, where $\approx$ is the congruence on $\@K(X)$ generated by $v(s_i) \approx v(t_i)$ and $x \approx v(u(x))$.
\end{proof}

It is a standard fact that a congruence ${\sim}$ on a $\kappa$-Boolean algebra $A$ is determined by the congruence class $[\top]_\sim \setle A$, which is an arbitrary $\kappa$-filter on $A$; and $\sim$ is $\kappa$-generated iff $[\top]_\sim$ is a principal filter $\up a$ (in which case $\sim$ is generated by the single pair $\top \sim a$, and we can take $a := \bigwedge_i (b_i <-> c_i)$ for some $\kappa$-ary family of generators $b_i \sim c_i$ of $\sim$).  For $a \in A$, we denote by $A/a$ the quotient by the congruence $\ang{(\top, a)} \setle A^2$ corresponding to the principal filter $\up a$.  Note that $A/a$ can be identified with the principal ideal $\down a \setle A$ via the isomorphism $\down a `-> A ->> A/a$; the quotient map is then identified with $a \wedge (-) : A ->> \down a$.  Thus for $\kappa$-presented $A$, we have bijections
\begin{alignat*}{2}
A &\cong \{\text{$\kappa$-generated congruences on } A\} &&\cong \{\text{$\kappa$-presented quotients of } A\} \\
a &|-> \ang{(\top, a)} &&|-> A/a \cong \down a
\end{alignat*}
where the second $\cong$ is by \cref{thm:alexandrov}, and the first $\cong$ is order-reversing.  Note, furthermore, that these bijections are compatible with homomorphisms, in the sense that for $f : A -> B \in \kappa\!{Bool}_\kappa$ and $a \in A$, $B/f(a)$ is the pushout of $f$ and $A/a$:
\begin{equation*}
\begin{tikzcd}
A \dar[two heads] \rar["f"] & B \dar[two heads] \\
A/a \rar & B/f(a) = B \otimes_A A/a.
\end{tikzcd}
\end{equation*}

Recall that in algebraic categories such as $\kappa\!{Bool}$, surjective homomorphisms are precisely the \defn{regular epimorphisms} (epimorphisms which are the coequalizer of some parallel pair of morphisms; see e.g., \cite[II~4.3.5]{Bor}).  Lagrange \cite{Lag} showed that in $\kappa\!{Bool}$, all epimorphisms are regular.  A consequence of his proof is the following key result, in some sense the main technical ingredient of this paper:

\begin{theorem}[Lagrange interpolation theorem]
\label{thm:lagrange}
Let
\begin{equation*}
\begin{tikzcd}
A \dar["g"'] \rar["f"] & B \dar["g'"] \\
C \rar["f'"'] & B \otimes_A C
\end{tikzcd}
\end{equation*}
be a pushout in $\kappa\!{Bool}$.  Then for any $b \in B$ and $c \in C$ such that $g'(b) \le f'(c)$, there is $a \in A$ such that $b \le f(a)$ and $g(a) \le c$.
\end{theorem}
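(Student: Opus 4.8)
The plan is to reduce the statement to an interpolation property of homomorphisms into $2 = \{\bot,\top\}$, i.e.\ to the level of prime filters/ultrafilters, and there to invoke the concrete description of the pushout $B \otimes_A C$ in $\kappa\!{Bool}$ as the $\kappa$-Boolean algebra generated by (copies of) $B$ and $C$ subject to the amalgamation relations over $A$. First I would fix $b \in B$ and $c \in C$ with $g'(b) \le f'(c)$ in $B \otimes_A C$, and consider the candidate element
\begin{equation*}
a_0 := \bigwedge \{\, a \in A \mid b \le f(a) \,\} \quad\text{(if this $\kappa$-ary meet exists)} \qquad\text{versus}\qquad a_1 := \bigvee \{\, a \in A \mid g(a) \le c \,\},
\end{equation*}
and try to show $a_0 \le a_1$, so that any $a$ in between works — but since arbitrary meets/joins need not exist in $A$, this must be done filter-theoretically rather than elementwise. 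Concretely, let $F := \{\, a \in A \mid b \le f(a)\,\}$, a $\kappa$-filter on $A$, and $I := \{\, a \in A \mid g(a) \le c\,\}$, a $\kappa$-ideal on $A$; the claim ``$\exists a\ (b \le f(a) \ \&\ g(a) \le c)$'' is exactly the assertion $F \cap I \ne \varnothing$, equivalently that $F$ and $\{\neg a \mid a \in I\}$ do not generate a proper filter, i.e.\ $F \cup \{\neg a : a\in I\}$ has the finite meet property failing.

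The key step is the contrapositive: \emph{if} $F \cap I = \varnothing$, then one can build a $\kappa$-Boolean homomorphism witnessing $g'(b) \not\le f'(c)$, contradicting the hypothesis. So suppose $F \cap I = \varnothing$. Then the filter $F$ and the ideal $I$ are disjoint, so by the standard Boolean prime-ideal-type argument (here one must be a little careful, since for $\kappa$-complete algebras one does \emph{not} have enough $\kappa$-complete ultrafilters in general) — instead, I would pass to the quotient $A/{\sim_F}$ where $\sim_F$ is the congruence with $[\top] = $ the filter generated by $F$, noting that the image of $I$ is a proper ideal there, and realize the pushout correspondingly. More precisely, using the compatibility of quotients with pushouts noted just before \cref{thm:lagrange} (for $f : A \to B$ and a quotient $A \twoheadrightarrow A/a$, the pushout is $B \twoheadrightarrow B/f(a)$, and $B\otimes_A C \twoheadrightarrow (B \otimes_A C)/(\text{image})$), I would quotient $B$ by the congruence forcing $f(F) \ni$ everything $\le$ the relevant elements — effectively quotienting $B$ so that $b$ becomes $\top$ and quotienting $C$ so that $c$ becomes $\bot$ — and observe that the corresponding pushout is a quotient of $B \otimes_A C$ in which the image of $g'(b)$ is $\top$ and the image of $f'(c)$ is $\bot$. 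If that pushout quotient is nontrivial (not the zero algebra), we get $g'(b) \not\le f'(c)$, done. The remaining point is therefore: \emph{the pushout of the two quotients $B \twoheadrightarrow B/b'$ (where $b'$ encodes ``$b = \top$'') and $C \twoheadrightarrow C/c'$ over $A$ is nonzero whenever $F \cap I = \varnothing$}; and this is exactly where LaGrange's actual argument (that epimorphisms in $\kappa\!{Bool}$ are regular, equivalently that the pushout is ``faithful'' in the appropriate sense) gets used. So in the end I expect the honest proof to cite or reprove the relevant piece of \cite{LaG}: that the square in the statement, after the indicated quotients, stays a pushout onto a nonzero algebra precisely when no interpolant obstruction exists.

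The main obstacle is precisely this last nonvanishing claim, which is \emph{not} a soft ultrafilter argument in the $\kappa$-complete setting: one cannot simply separate the disjoint filter $F$ and ideal $I$ by a $\kappa$-complete prime filter, since $\kappa$-complete Boolean algebras can fail to have any $\kappa$-complete ultrafilters at all (e.g.\ atomless ones). So the real content is combinatorial/syntactic — analyzing normal forms of elements of the pushout $B \otimes_A C$ (every element is a finite join of ``rectangles'' $g'(b_i) \wedge f'(c_i)$, up to the amalgamation relations) and showing directly that $g'(b) \le f'(c)$ forces such a rectangle decomposition of $\neg g'(b) \vee f'(c) = \top$ refining through a common $a \in A$. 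I would expect the cleanest route is to lift LaGrange's interpolation lemma for $\kappa\!{Bool}$ essentially verbatim, perhaps streamlined using the quotient/pushout compatibility square above to reduce to the special case $b = \top_B$, $c = \bot_C$, where the statement becomes: if $B$ and $C$ are nonzero and the pushout $B \otimes_A C$ is zero, then already $\top_A = \bot_A$, i.e.\ $A$ is zero — a more obviously ``LaGrange-flavored'' faithfulness statement about amalgamation in $\kappa\!{Bool}$.
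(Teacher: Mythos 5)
Your overall strategy---quotient so that $b$ becomes $\top$ and $c$ becomes $\bot$, observe that the hypothesis $g'(b) \le f'(c)$ says exactly that the resulting pushout is trivial, and then invoke a LaGrange-type faithfulness statement---is the same strategy the paper uses, and you are right on two important points: a naive prime-filter separation argument is unavailable in the $\kappa$-complete setting, and the irreducible input is LaGrange's theorem, which the paper does cite as a black box (in the form of the \emph{strong amalgamation property}: injective maps in $\kappa\!{Bool}$ push out to injective maps). But the reduction you end with has a genuine gap: the statement you reduce to, ``if $B$ and $C$ are nonzero and $B \otimes_A C$ is trivial then $A$ is trivial,'' is false. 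Take $A = 2 \times 2$ with $f, g$ the two projections onto $B = C = 2$: applying the commuting square to $(\top, \bot) \in A$ gives $g'(\top) = f'(\bot)$, so the pushout is trivial, while $A$ is not. The correct specialization of the theorem to $b = \top_B$, $c = \bot_C$ is ``there exists $a \in A$ with $f(a) = \top$ and $g(a) = \bot$'' (here $a = (\top,\bot)$), and this collapses to ``$A$ is trivial'' only when $f$ and $g$ are injective---which is precisely the hypothesis needed to apply amalgamation. Quotienting $B$ by $b$ and $C$ by $\neg c$ does nothing to make the maps out of $A$ injective, so your ``remaining point'' ($F \cap I = \varnothing$ implies the quotient pushout is nonzero), while a correct restatement of the theorem, is left unproved: the route you propose to it does not go through.

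The missing idea is the simultaneous choice of compatible filters on all three algebras. The paper sets $U := \{a_1 \wedge \neg a_2 \mid b \le f(a_1),\ g(a_2) \le c\} \subseteq A$ (the $\kappa$-filter generated by your $F$ together with the complements of your $I$), and correspondingly \emph{enlarges} the quotients of $B$ and $C$, taking $V := \{b_1 \wedge \neg f(a_2) \mid b \le b_1,\ g(a_2) \le c\}$ and $W := \{g(a_1) \wedge \neg c_2 \mid b \le f(a_1),\ c_2 \le c\}$. One checks $U = f^{-1}(V) = g^{-1}(W)$, so the induced maps $A/U \to B/V$ and $A/U \to C/W$ are injective; moreover $b$ becomes $\top$ in $B/V$ and $c$ becomes $\bot$ in $C/W$, so the hypothesis forces $B/V \otimes_{A/U} C/W$ to be trivial, and LaGrange's amalgamation property then yields that $A/U$ is trivial. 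The point of this particular $U$ is that ``$A/U$ trivial'' means $\bot \in U$, i.e., there are $a_1 \le a_2$ with $b \le f(a_1)$ and $g(a_2) \le c$, so $a := a_1$ is the desired interpolant; the conclusion is read off directly, rather than being the false claim that $A$ itself is trivial. Your filter-theoretic reformulation via $F$ and $I$ is equivalent to this, but without identifying $U$, $V$, $W$ you have no way to bring the amalgamation property to bear.
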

\begin{proof}
Lagrange \cite{Lag} proves that $\kappa\!{Bool}$ has the \defn{(strong) amalgamation property}, i.e., that if $f, g$ above are injective, then so are $f', g'$ (and the diagram is a pullback).  While his argument can easily be adapted to yield the above statement, we can also deduce it abstractly, as follows.
Let
\begin{align*}
U &:= \{a_1 \wedge \neg a_2 \mid b \le f(a_1) \AND g(a_2) \le c\} \subseteq A, \\
V &:= \{b_1 \wedge \neg f(a_2) \mid b \le b_1 \AND g(a_2) \le c\} \subseteq B, \\
W &:= \{g(a_1) \wedge \neg c_2 \mid b \le f(a_1) \AND c_2 \le c\} \subseteq C;
\end{align*}
these are $\kappa$-filters.  It is easily seen that $U = f^{-1}(V) = g^{-1}(W)$,
whence replacing $A, B, C$ with $A/U, B/V, C/W$ respectively in the diagram above renders $f, g$ injective.  Clearly $b \in V$ and $\neg c \in W$, i.e., $b, c$ become $\top, \bot$ in $B/V, C/W$ respectively; thus $g'(b) \le f'(c) \in B \otimes_A C$ implies that $B/V \otimes_{A/U} C/W$ is trivial.  Now by amalgamation, $A/U$ must be trivial, i.e., $\bot \in U$, i.e., there are $a_1 \le a_2 \in A$ such that $b \le f(a_1)$ and $g(a_2) \le c$.  Taking $a := a_1$ works.
\end{proof}

\begin{corollary}[Lagrange]
\label{thm:kbool-epi}
All epimorphisms in $\kappa\!{Bool}$ are surjective.
\end{corollary}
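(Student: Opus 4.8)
The plan is to deduce this at once from the LaGrange interpolation theorem (\cref{thm:lagrange}), using the standard fact that a morphism is epic exactly when its cokernel pair is degenerate. Recall that in any category with pushouts, $f : A -> B$ is an epimorphism iff the two coprojections $g', f' : B -> B \otimes_A B$ of the pushout of $f$ along itself are equal: one direction is immediate since $g' \circ f = f' \circ f$ and $f$ is epic; for the converse, if $g' = f'$ and $u \circ f = v \circ f$ for some $u, v : B -> X$, then $u$ and $v$ are both the unique map $B \otimes_A B -> X$ mediating this cocone, hence equal. Since $\kappa\!{Bool}$ is cocomplete, the pushout $B \otimes_A B$ exists.

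Now let $f : A -> B$ be an epimorphism in $\kappa\!{Bool}$, form the pushout of $f$ with itself, and apply \cref{thm:lagrange} with $C := B$ and $g := f$. For an arbitrary $b \in B$, the fact that $f$ is epic gives $g'(b) = f'(b)$, so in particular $g'(b) \le f'(b)$; then \cref{thm:lagrange}, taking $c := b$ there, produces $a \in A$ such that $b \le f(a)$ and $f(a) \le b$, i.e., $f(a) = b$. As $b$ was arbitrary, $f$ is surjective.

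There is essentially no obstacle here: all the real content sits in \cref{thm:lagrange}, and the remaining ingredients — the cokernel-pair criterion for being an epimorphism and cocompleteness of $\kappa\!{Bool}$ — are entirely routine.
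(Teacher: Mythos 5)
Your proof is correct and follows essentially the same route as the paper: both reduce to \cref{thm:lagrange} applied to the pushout of $f$ along itself with $b = c$, using the standard cokernel-pair characterization of epimorphisms (the paper phrases this as the square with $f' = g' = 1_B$ being a pushout, you as the two coprojections of $B \otimes_A B$ coinciding, which is the same fact). The only nitpick is the phrasing ``$u$ and $v$ are both the unique map $B \otimes_A B \to X$'' --- they are maps out of $B$, and what you mean is that the unique mediating map $w$ satisfies $u = w \circ g' = w \circ f' = v$ --- but the argument is clearly sound.
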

\begin{proof}
Let $f : A -> B \in \kappa\!{Bool}$ be an epimorphism, i.e., the above diagram with $g = f$, $C = B = B \otimes_A C$, and $f' = g' = 1_B$ is a pushout.  Then for any $b \in B$, by \cref{thm:lagrange}, there is $a \in A$ such that $b \le f(a) \le b$.
\end{proof}

\begin{corollary}
\label{thm:kboolk-quotient}
For any $\kappa$-presented $\kappa$-Boolean algebra $A$, we have an order-isomorphism
\begin{align*}
A &\cong \Sub_{\kappa\!{Bool}_\kappa^\op}(A) \\
a &|-> A/a \cong \down a 
\end{align*}
which as $A$ varies is a natural transformation $1_{\kappa\!{Bool}_\kappa} -> \Sub_{\kappa\!{Bool}_\kappa^\op} : \kappa\!{Bool}_\kappa -> \kappa\!{Bool}_\kappa$, i.e., a homomorphism $f : A -> B \in \kappa\!{Bool}_\kappa$ corresponds to pullback of subobjects $\Sub_{\kappa\!{Bool}_\kappa^\op}(A) -> \Sub_{\kappa\!{Bool}_\kappa^\op}(B)$ along $f : B -> A \in \kappa\!{Bool}_\kappa^\op$.
\end{corollary}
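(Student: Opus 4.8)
The idea is to identify $\Sub_{\kappa\!{Bool}_\kappa^\op}(A)$ with the poset of quotients of $A$ in $\kappa\!{Bool}_\kappa$, and then feed this into the chain of bijections already recorded above: for $\kappa$-presented $A$, $A \cong \{\text{$\kappa$-generated congruences on }A\} \cong \{\text{$\kappa$-presented quotients of }A\}$, $a |-> \ang{(\top, a)} |-> (A ->> A/a)$, the second step being \cref{thm:alexandrov}. First I would check that $a |-> (A ->> A/a)$ is a well-defined map into $\Sub_{\kappa\!{Bool}_\kappa^\op}(A)$ which is surjective. For well-definedness, $A ->> A/a$ is a surjection, hence epic in $\kappa\!{Bool}$, a fortiori epic in the full subcategory $\kappa\!{Bool}_\kappa$, i.e., monic in $\kappa\!{Bool}_\kappa^\op$, and $A/a \in \kappa\!{Bool}_\kappa$ by \cref{thm:alexandrov}; so it indeed represents a subobject of $A$ in $\kappa\!{Bool}_\kappa^\op$. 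For surjectivity, let an arbitrary subobject of $A$ in $\kappa\!{Bool}_\kappa^\op$ be represented by an epimorphism $e : A -> S$ of $\kappa\!{Bool}_\kappa$ (with $S$ automatically $\kappa$-presented). Since a pushout in $\kappa\!{Bool}$ of $\kappa$-presented algebras over a $\kappa$-presented one is again $\kappa$-presented, $\kappa\!{Bool}_\kappa$ is closed under pushouts in $\kappa\!{Bool}$, so the pushout of $e$ with itself agrees whether computed in $\kappa\!{Bool}_\kappa$ or in $\kappa\!{Bool}$; hence $e$ remains epic in $\kappa\!{Bool}$ and is therefore surjective by \cref{thm:kbool-epi}. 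Then $S \cong A/{\ker e}$, and since $S$ is $\kappa$-presented, $\ker e$ is a $\kappa$-generated congruence (again by \cref{thm:alexandrov}), hence of the form $\ang{(\top, a)}$; thus the subobject is $(A ->> A/a)$.

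Next I would check that the map is an order-isomorphism, in particular injective. Unwinding the subobject preorder in an opposite category, $(A ->> A/a) \le (A ->> A/a')$ in $\Sub_{\kappa\!{Bool}_\kappa^\op}(A)$ holds iff the quotient map $A ->> A/a$ factors (in $\kappa\!{Bool}_\kappa$) through $A ->> A/a'$, iff $\ker(A -> A/a') \setle \ker(A -> A/a)$ as congruences, iff $\up{a'} \setle \up a$ as filters, iff $a \le a'$. So the map is order-preserving and order-reflecting, hence an order-isomorphism; this is consistent with the identification $A/a \cong \down a$, where $\down a \setle \down{a'} \iff a \le a'$.

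Finally, for naturality: given $f : A -> B \in \kappa\!{Bool}_\kappa$, the transition map $\Sub_{\kappa\!{Bool}_\kappa^\op}(A) -> \Sub_{\kappa\!{Bool}_\kappa^\op}(B)$ is pullback of subobjects along $f$ regarded as a morphism $B -> A$ of $\kappa\!{Bool}_\kappa^\op$, and a pullback in $\kappa\!{Bool}_\kappa^\op$ is a pushout in $\kappa\!{Bool}_\kappa$. So the pullback of $(A ->> A/a)$ along $f$ is the pushout leg $B ->> B \otimes_A A/a$, which by the square $B \otimes_A A/a = B/f(a)$ recorded above equals $B ->> B/f(a)$, i.e., precisely the subobject of $B$ corresponding to $f(a)$. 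Hence the naturality square commutes; and since the isomorphism is thus compatible with the transition maps, it exhibits $\Sub_{\kappa\!{Bool}_\kappa^\op}(-)$ as (isomorphic to) the identity functor on $\kappa\!{Bool}_\kappa$, so in particular the pullback maps are $\kappa$-Boolean homomorphisms, being conjugate to the homomorphisms $f$.

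The one point needing genuine care — as opposed to mere definition-chasing — is the reduction from ``epic in the full subcategory $\kappa\!{Bool}_\kappa$'' to ``epic in $\kappa\!{Bool}$'', which is what lets us conclude that an arbitrary subobject of $A$ in $\kappa\!{Bool}_\kappa^\op$ really is a congruence quotient $A ->> A/a$; everything else rests on \cref{thm:alexandrov} and \cref{thm:kbool-epi}, the latter being where \cref{thm:lagrange} supplies the actual content.
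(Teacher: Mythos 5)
Your proposal is correct and follows essentially the same route as the paper, which simply cites \cref{thm:kbool-epi} together with the discussion preceding \cref{thm:lagrange} (the bijections $A \cong \{\text{$\kappa$-generated congruences}\} \cong \{\text{$\kappa$-presented quotients}\}$ and the pushout square $B/f(a) = B \otimes_A A/a$); you have merely filled in the details, including the worthwhile observation that epimorphisms in the full subcategory $\kappa\!{Bool}_\kappa$ remain epimorphisms in $\kappa\!{Bool}$ because $\kappa\!{Bool}_\kappa$ is closed under pushouts.
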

\begin{proof}
By \cref{thm:kbool-epi} and the discussion preceding \cref{thm:lagrange}.
\end{proof}

We will also need the following more technical consequence of \cref{thm:lagrange}:

\begin{lemma}
\label{thm:lusin-suslin}
Let $Z \subseteq X$ be $\kappa$-ary sets, let
\begin{equation*}
\begin{tikzcd}
Z \dar[hook, "i"'] \rar[hook, "i"] & X \dar[hook, "g"] \\
X \rar[hook, "f"'] & X \sqcup_Z X
\end{tikzcd}
\end{equation*}
be a pushout in $\!{Set}_\kappa$, where $i$ is the inclusion, and consider the induced pushout
\begin{equation*}
\begin{tikzcd}
\@K(Z) \dar[hook, "i_*"'] \rar[hook, "i_*"] & \@K(X) \dar[hook, "g_*"] \\
\@K(X) \rar[hook, "f_*"'] & \@K(X \sqcup_Z X) \mathrlap{{}\cong \@K(X) \otimes_{\@K(Z)} \@K(X)}
\end{tikzcd}
\end{equation*}
in $\kappa\!{Bool}_\kappa$.  For any $b \in \@K(X)$ such that
\begin{align*}
\tag{$*$}
f_*(b) \wedge g_*(b) \le \bigwedge_{y \in X \setminus Z} (f(y) <-> g(y)) \in \@K(X \sqcup_Z X),
\end{align*}
there is a retraction $h : \@K(X) ->> \@K(Z) \in \kappa\!{Bool}_\kappa$ of $i_* : \@K(Z) `-> \@K(X)$ such that
\begin{align*}
\tag{$\dagger$}
b = i_*(h(b)) \wedge \bigwedge_{y \in X \setminus Z} (i_*(h(y)) <-> y) \in \@K(X).
\end{align*}
\end{lemma}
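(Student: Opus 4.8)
The plan is to read \cref{thm:lusin-suslin} as the algebraic distillation of (the proof of) the Lusin--Suslin theorem, with \cref{thm:lagrange} standing in for the Lusin separation theorem. Dually, $i_* : \@K(Z) `-> \@K(X)$ corresponds to the coordinate projection $2^X ->> 2^Z$; the hypothesis $(*)$ says this projection is injective on the ``$\kappa$-Borel set'' named by $b$; and the retraction $h$ we seek encodes a ``$\kappa$-Borel section'' $2^Z -> 2^X$ inverting $b ->> 2^Z$ over the image of $b$, with $(\dagger)$ recording that $b$ is carved out of that section's graph by the condition ``$s = \sigma(\pi_Z s)$''. Since $\@K(X)$ is free on $X$, the homomorphism $h$ is pinned down by its values on generators: $h(z) := z$ for $z \in Z$ (forced by $h \circ i_* = 1_{\@K(Z)}$), and a choice of $h(y) \in \@K(Z)$ for each $y \in X \setminus Z$, which is where the work lies.

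First I would construct the $h(y)$ one generator at a time. Fix $y \in X \setminus Z$ and apply \cref{thm:lagrange} to the given pushout with the inputs $\beta := b \wedge y$, taken in the copy of $\@K(X)$ mapped in by $g_*$, and $\gamma := \neg b \vee y$, taken in the copy mapped in by $f_*$. The required hypothesis $g_*(\beta) \le f_*(\gamma)$ unwinds to $f_*(b) \wedge g_*(b) \wedge g(y) \le f(y)$, which is immediate from $(*)$: the latter gives $f_*(b) \wedge g_*(b) \le (f(y) <-> g(y))$, and meeting with $g(y)$ forces $\le f(y)$. Then \cref{thm:lagrange} supplies $\alpha_y \in \@K(Z)$ with
\[ b \wedge y \le i_*(\alpha_y) \le \neg b \vee y, \]
equivalently $b \wedge i_*(\alpha_y) = b \wedge y$. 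I set $h(y) := \alpha_y$; the resulting homomorphism $h : \@K(X) -> \@K(Z)$ satisfies $h \circ i_* = 1_{\@K(Z)}$ (check on generators $z \in Z$), hence is a retraction, hence surjective.

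To finish, write $E := \bigwedge_{y \in X \setminus Z}(i_*(h(y)) <-> y)$ for the second factor on the right of $(\dagger)$. From $b \wedge i_*(\alpha_y) = b \wedge y$ for each $y$ one gets $b \le (i_*(\alpha_y) <-> y)$, hence $b \le E$ and $b \wedge E = b$. On the other hand, the two homomorphisms $i_* \circ h$ and $1_{\@K(X)}$ from $\@K(X)$ to itself become equal after composing with the quotient $q : \@K(X) ->> \@K(X)/E \cong \down E$: they agree on each generator $z \in Z$ (both give $q(z)$) and on each generator $y \in X \setminus Z$ (the first gives $q(i_*(\alpha_y)) = q(y)$ since $E \le (i_*(\alpha_y) <-> y)$). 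Hence $i_*(h(t)) \wedge E = t \wedge E$ for every $t \in \@K(X)$; taking $t := b$ gives $i_*(h(b)) \wedge E = b \wedge E = b$, which is exactly $(\dagger)$.

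I do not anticipate a serious obstacle beyond \cref{thm:lagrange} itself; the one point needing care is guessing the right inputs $\beta = b \wedge y$ and $\gamma = \neg b \vee y$ to the interpolation theorem and checking that $(*)$ is precisely strong enough to make its hypothesis hold uniformly in $y$. Everything after that is routine computation in free $\kappa$-Boolean algebras and their principal quotients, plus bookkeeping of the two copies of $\@K(X)$ inside $\@K(X \sqcup_Z X)$.
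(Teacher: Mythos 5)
Your proposal is correct and follows essentially the same route as the paper: apply the LaGrange interpolation theorem generator-by-generator to the pair $b \wedge y \le \cdot \le b \to y$ to define $h(y)$, then exploit freeness of $\@K(X)$ to see that $i_* \circ h$ agrees with the identity after passing to a principal quotient. Your endgame is in fact a slight streamlining of the paper's (which runs the "agree on generators mod a quotient" argument twice, once mod $b$ and once mod $E$, whereas you run it once mod $E$ and combine with $b \le E$), but the ideas are the same.
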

\begin{proof}
On generators $z \in Z \subseteq X \subseteq \@K(X)$, we must put $h(z) := z \in \@K(Z)$ to guarantee that $h$ is a retraction of $i_*$.  For $y \in X \setminus Z$, by ($*$), we have
\begin{align*}
f_*(b \wedge y) \le g_*(b -> y),
\end{align*}
whence by \cref{thm:lagrange} there is some $h(y) \in \@K(Z)$ such that
\begin{align*}
b \wedge y \le i_*(h(y)) \le b -> y,
\qquad\text{or equivalently,}\qquad
b \wedge y = b \wedge i_*(h(y)).
\tag{$\ddagger$}
\end{align*}
This defines $h$ on generators $X \subseteq \@K(X)$.

Note that ($\ddagger$), which also trivially holds for $y \in Z$, says precisely that the homomorphism $i_* \circ h : \@K(X) -> \@K(X)$ becomes equal (on generators $y \in X$) to the identity after projecting to the quotient $\@K(X)/b \cong \down b$.  Thus, ($\ddagger$) holds more generally for \emph{all} $y \in \@K(X)$.

We now verify ($\dagger$).  For $\le$: $b \le i_*(h(b))$ follows from ($\ddagger$) by taking $y := b$, while $b \le i_*(h(y)) <-> y$ is a restatement of ($\ddagger$).  For $\ge$: the right-hand side is the projection of $i_*(h(b))$ to the quotient $\@K(X)/\bigwedge_{y \in X/Z} (i_*(h(y)) <-> y) \cong \down \bigwedge_{y \in X/Z} (i_*(h(y)) <-> y)$ identifying each $y \in X \setminus Z$ with $i_*(h(y))$; clearly $y \in Z$ is also identified with $i_*(h(y)) = y$, whence again $i_* \circ h$ becomes equal to the identity after projecting to this quotient, whence
\begin{align*}
i_*(h(b)) \wedge \bigwedge_{y \in X/Z} (i_*(h(y)) <-> y) = b \wedge \bigwedge_{y \in X/Z} (i_*(h(y)) <-> y) \le b.
&\qedhere
\end{align*}
\end{proof}

\begin{remark}
\label{rmk:lusin-suslin}
When $\kappa = \omega_1$, the above results (when all the algebras are countably presented) correspond, via Loomis--Sikorski duality as in the following section, to familiar results in descriptive set theory.  In \cref{thm:lagrange}, taking $A, B, C$ to be the duals of standard Borel spaces $X, Y, Z$ respectively and $f, g$ to be the duals of Borel maps $p : Y -> X$ and $q : Z -> X$, the result says that for any Borel sets $b \subseteq Y$ and $c \subseteq Z$ such that $b \times_X Z \subseteq Y \times_X c$, i.e., $p(b) \cap q(\neg c) = \emptyset$, there is a Borel set $a \subseteq X$ such that $b \subseteq p^{-1}(a)$ and $q^{-1}(a) \subseteq c$, i.e., $p(b) \subseteq a$ and $q(\neg c) \cap a = \emptyset$.  This is the Lusin separation theorem for $\*\Sigma^1_1$ sets (see \cite[14.7]{Kcdst}).

Both \cref{thm:kboolk-quotient} and \cref{thm:lusin-suslin} are dual versions of the Lusin--Suslin theorem on injective Borel images (see \cite[15.1]{Kcdst}).  \Cref{thm:kboolk-quotient} says that for any injective Borel map $p : Y -> X$ (dual to an epimorphism $A -> B \in \sigma\!{Bool}_\sigma$), there is a Borel set $a \subseteq X$ such that $p$ is an isomorphism between $Y$ and the Borel subspace $a$ (dual to the quotient $A/a$).  \Cref{thm:lusin-suslin} says that for any countable sets $X = Y \sqcup Z$ and Borel set $b \subseteq 2^X \cong 2^Z \times 2^Y$ such that $b \times_{2^Z} b \subseteq 2^X \times_{2^Z} 2^X \cong 2^Z \times 2^Y \times 2^Y$ is contained in the diagonal, i.e., the projection $2^Z \times 2^Y ->> 2^Z$ restricted to $b$ is injective, there is a Borel map $u : 2^Z -> 2^Y$ whose graph contains $b$.  (In fact, both proofs, once unravelled, can be seen as dual versions of the proof of Lusin--Suslin given in \cite{Cls}.  See also \cite[3.4.26]{Cloc}.)
\end{remark}

In the rest of this section, we verify that $\kappa\!{Bool}_\kappa^\op$ is $\kappa$-complete Boolean $\kappa$-extensive.  Clearly $\kappa\!{Bool}_\kappa \setle \kappa\!{Bool}$ is closed under $\kappa$-ary colimits, whence $\kappa\!{Bool}_\kappa^\op$ is $\kappa$-complete.  Booleanness follows from \cref{thm:kboolk-quotient}.  Thus, it remains to show $\kappa$-extensivity.%
\footnote{These proofs become much more motivated when one thinks of $\kappa\!{Bool}_\kappa^\op$ as standard $\kappa$-Borel locales, as sketched in the introduction.}

\begin{proposition}
\label{thm:kboolk-prod}
$\kappa$-ary products of $\kappa$-presented $\kappa$-Boolean algebras are $\kappa$-presented.  Thus, $\kappa\!{Bool}_\kappa^\op$ has $\kappa$-ary coproducts.
\end{proposition}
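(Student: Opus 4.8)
The plan is to realize the product $\prod_{i\in I}A_i$ (where $|I|<\kappa$ and each $A_i$ is $\kappa$-presented) explicitly as a quotient $\@K(G)/{\sim}$ with $G$ a $\kappa$-ary set of generators and $\sim$ a $\kappa$-generated congruence, by gluing presentations of the $A_i$ along a $\kappa$-ary partition of unity. For each $i$ fix a presentation $p_i\colon\@K(X_i)\twoheadrightarrow A_i$ witnessing $\kappa$-presentedness, so $X_i\in\!{Set}_\kappa$ and $\ker p_i$ is $\kappa$-generated, hence (by the discussion preceding \cref{thm:lagrange}) principal, say generated by the single pair $\top\sim a_i$ for some $a_i\in\@K(X_i)$. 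Take the $X_i$ pairwise disjoint and disjoint from a fresh set $E=\{e_i:i\in I\}$, put $G:=E\sqcup\bigsqcup_i X_i$ (a $\kappa$-ary set, since $\kappa$ is regular), and let $\sim$ be the congruence on $\@K(G)$ generated by: (R1) $e_i\wedge e_j\sim\bot$ for $i\ne j$; (R2) $\bigvee_{i\in I}e_i\sim\top$; (R3) $x\wedge\neg e_i\sim\bot$ for $i\in I$ and $x\in X_i$; (R4) $e_i\wedge\neg\bar a_i\sim\bot$ for $i\in I$, where $\bar a_i\in\@K(G)$ is the image of $a_i$ under $\@K(X_i)\hookrightarrow\@K(G)$. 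By regularity this is a $\kappa$-ary family of relations, so $B:=\@K(G)/{\sim}$ is $\kappa$-presented; the goal is to prove $B\cong\prod_i A_i$.

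I would first invoke the standard fact that a $\kappa$-Boolean algebra with a $\kappa$-ary partition of unity $(f_i)_{i\in I}$ (pairwise disjoint elements with $\bigvee_i f_i=\top$) is canonically the product $\prod_i{\downarrow}f_i$ of principal ideals, via $b\mapsto(b\wedge f_i)_i$ --- a one-line check using $\kappa$-ary distributivity. By (R1)--(R2) the classes $[e_i]$ form such a partition of unity in $B$, so $B\cong\prod_i{\downarrow}[e_i]$, and it remains to identify each ${\downarrow}[e_i]$ with $A_i$. The canonical homomorphism $B\to B/[e_i]\cong{\downarrow}[e_i]$, $b\mapsto[e_i]\wedge b$, precomposed with $\@K(G)\to B$ and restricted along $\@K(X_i)\hookrightarrow\@K(G)$, gives a $\kappa$-Boolean homomorphism $\psi_i\colon\@K(X_i)\to{\downarrow}[e_i]$ with $\psi_i(a_i)=[e_i\wedge\bar a_i]=[e_i]$ by (R4) and $\psi_i(x)=[e_i\wedge x]=[x]$ by (R3). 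So $\psi_i$ factors through a homomorphism $\bar\psi_i\colon A_i\to{\downarrow}[e_i]$ whose image contains every generator $[x]$ ($x\in X_i$); and a routine rewriting --- applying $[e_i]\wedge(-)$ to an element $[e_i\wedge t]$ of ${\downarrow}[e_i]$ and using (R1), (R3) to kill every generator outside $X_i$ together with $e_i\mapsto[e_i]=\top$ --- shows that the $[x]$ generate ${\downarrow}[e_i]$, so $\bar\psi_i$ is surjective.

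The main obstacle is the \emph{injectivity} of $\bar\psi_i$, i.e.\ that (R1)--(R4) are complete enough to force $B$ down to $\prod_i A_i$ rather than leaving it too large. To handle it I would produce the comparison map going the other way: writing $c_{[i]}\in\prod_j A_j$ for the tuple equal to $c\in A_i$ in coordinate $i$ and $\bot$ elsewhere, the assignments $e_i\mapsto\top_{[i]}$ and $x\mapsto(p_i(x))_{[i]}$ ($x\in X_i$) satisfy (R1)--(R3) trivially, and satisfy (R4) because evaluating $a_i$ at the tuples $(p_i(x))_{[i]}$ produces an element whose coordinate-$i$ entry is $p_i(a_i)=\top^{A_i}$, so meeting its complement with $\top_{[i]}$ gives $\bot$; hence there is a homomorphism $q\colon B\to\prod_j A_j$ with $q([e_i])=\top_{[i]}$ and $q([x])=(p_i(x))_{[i]}$. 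Since $q([e_i])=\top_{[i]}$, $q$ restricts on ${\downarrow}[e_i]$ to a homomorphism $q_i\colon{\downarrow}[e_i]\to{\downarrow}\top_{[i]}\cong A_i$ with $q_i\circ\bar\psi_i=\mathrm{id}_{A_i}$ (the two sides agree on the generators $p_i(x)$). Thus $\bar\psi_i$ is split monic, hence --- being also surjective --- an isomorphism; combined with $B\cong\prod_i{\downarrow}[e_i]$ this yields $B\cong\prod_i A_i$, so $\prod_i A_i$ is $\kappa$-presented. The closing sentence is then immediate: coproducts in $\kappa\!{Bool}_\kappa^\op$ are products in $\kappa\!{Bool}_\kappa$, and these are computed as in $\kappa\!{Bool}$ (which has all products), the latter now being known to land in the full subcategory $\kappa\!{Bool}_\kappa$.
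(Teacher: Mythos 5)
Your proof is correct and follows essentially the same route as the paper: you write down the identical presentation (your relations (R1)--(R4) are exactly the conjuncts of the paper's single relation $s$, with $e_i$ playing the role of the generator $i$), and you reduce to the same standard fact that a $\kappa$-ary partition of unity splits the algebra as a product of principal ideals. The only difference is in identifying $\down[e_i]$ with $A_i$: the paper computes the quotient $\@K(I \sqcup \bigsqcup_j X_j)/(s \wedge i)$ directly, while you build the two comparison homomorphisms explicitly and check they are mutually inverse --- a slightly longer but equally valid bookkeeping of the same step.
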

\begin{proof}
It is standard that when $<\kappa$-many elements $c_i \in A$ of a $\kappa$-ary Boolean algebra $A$ form a \defn{partition}, i.e., $\bigvee_i c_i = \top$ and $c_i \wedge c_j = \bot$ for $i \ne j$, then we have an isomorphism
\begin{align*}
A &\cong \prod_i \down c_i \cong \prod_i A/c_i \\
a &|-> (c_i \wedge a)_i.
\end{align*}
Now let $A_i = \@K(X_i)/r_i$ with $\abs{X_i} < \kappa$ and $r_i \in \@K(X_i)$ for each $i \in I$, $\abs{I} < \kappa$, be $\kappa$-presented $\kappa$-Boolean algebras.  Then we claim
\begin{align*}
\prod_i A_i \cong \@K(I \sqcup \bigsqcup_i X_i)/s &&\text{where}&& s := \bigwedge_{i \ne j} \neg(i \wedge j) \wedge (\bigvee_i i) \wedge \bigwedge_i \bigwedge_{x \in X_i} (x -> i) \wedge \bigwedge_i (i -> r_i).
\end{align*}
Indeed, the relations in $s$ ensure that the generators $i \in I \setle \@K(I \sqcup \bigsqcup_j X_j)$ form a partition in $\@K(I \sqcup \bigsqcup_j X_j)/s$, and that for each $i \in I$ we have
\begin{align*}
\@K(I \sqcup \bigsqcup_j X_j)/s/i
&\cong \@K(I \sqcup \bigsqcup_j X_j)/(s \wedge i) \\
&= \@K(I \sqcup \bigsqcup_j X_j)/(i \wedge \bigwedge_{j \ne i} (\neg j \wedge \bigwedge_{x \in X_j} \neg x) \wedge r_i) \\
&\cong \@K(X_i)/r_i
= A_i.
\qedhere
\end{align*}
\end{proof}

\begin{proposition}
\label{thm:kboolk-ext}
$\kappa\!{Bool}^\op$ and $\kappa\!{Bool}_\kappa^\op$ are $\kappa$-extensive.
\end{proposition}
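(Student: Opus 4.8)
The plan is to verify $\kappa$-extensivity directly for $\kappa\!{Bool}^\op$ and then observe that all the relevant (co)limits live inside the full subcategory $\kappa\!{Bool}_\kappa$. Since $\kappa\!{Bool}$ is cocomplete, $\kappa\!{Bool}^\op$ is finitely complete, and it has $\kappa$-ary coproducts --- namely the $\kappa$-ary products $\prod_{i\in I}A_i$ of $\kappa$-Boolean algebras, with coproduct injections $\iota_i$ the product projections $\pi_i\colon\prod_j A_j\to A_i$. Likewise $\kappa\!{Bool}_\kappa^\op$ is finitely complete (as $\kappa\!{Bool}_\kappa\subseteq\kappa\!{Bool}$ is closed under $\kappa$-ary colimits) and has $\kappa$-ary coproducts by \cref{thm:kboolk-prod}; since $\kappa\!{Bool}_\kappa$ is full and closed under the limits and colimits used below, everything is computed the same way in both settings, so it suffices to treat $\kappa\!{Bool}^\op$.

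The one tool I will use is the \emph{partition} decomposition already exploited in the proof of \cref{thm:kboolk-prod}: a $\kappa$-ary partition $(c_i)_{i\in I}$ of a $\kappa$-Boolean algebra $A$ yields an isomorphism $A\cong\prod_i A/c_i$, $a\mapsto(c_i\wedge a)_i$, whose $i$th component is the quotient map $A\to A/c_i\cong\down c_i$ --- here $\lvert I\rvert<\kappa$ and regularity of $\kappa$ make $\bigvee_i c_i=\top$ legitimate. Applied to $\prod_{i\in I}A_i$ with its canonical partition $(e_i)_i$ ($e_i$ having $i$th coordinate $\top$ and all others $\bot$), this exhibits $\pi_i$ as the quotient of $\prod_j A_j$ by $e_i$. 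Now a morphism $f\colon Y\to\bigsqcup_i X_i$ in $\kappa\!{Bool}^\op$ is a homomorphism $\phi\colon\prod_i A_i\to B$, writing $A_i,B$ for the algebras dual to $X_i,Y$; since $\phi$ preserves finite meets, $\kappa$-ary joins, $\top$ and $\bot$, the elements $\phi(e_i)$ form a partition of $B$. Using the compatibility of quotients with pushouts from the discussion preceding \cref{thm:lagrange}, the pullback $f^*(X_i)$ is then dual to $B\otimes_{\prod_j A_j}A_i = B\otimes_{\prod_j A_j}(\prod_j A_j)/e_i\cong B/\phi(e_i)\cong\down\phi(e_i)$, with the map $f^*(X_i)\to Y$ dual to the quotient map $B\to B/\phi(e_i)$; and applying the partition decomposition inside $B$, these quotient maps exhibit $B\cong\prod_i B/\phi(e_i)$, i.e., dually $Y\cong\bigsqcup_i f^*(X_i)$. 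This establishes pullback-stability. In the case $I=\emptyset$ the coproduct is the terminal one-element algebra $\mathbf 1$ of $\kappa\!{Bool}$, i.e., the initial object of $\kappa\!{Bool}^\op$, which is strict because $\top=\bot$ in $\mathbf 1$ forces any algebra admitting a homomorphism from $\mathbf 1$ to be trivial. For disjointness, $\pi_i$ is surjective, hence epic in $\kappa\!{Bool}$, so each $\iota_i$ is monic in $\kappa\!{Bool}^\op$; and for $i\ne j$ the pullback of $\iota_i,\iota_j$ is dual to the pushout $(\prod_k A_k)/e_i\otimes_{\prod_k A_k}(\prod_k A_k)/e_j\cong(\prod_k A_k)/(e_i\wedge e_j)=(\prod_k A_k)/\bot=\mathbf 1$, the initial object. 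Hence $\kappa\!{Bool}^\op$ is $\kappa$-extensive, and --- since $\mathbf 1$ and each $B/\phi(e_i)$ are $\kappa$-presented whenever $B$ is --- so is $\kappa\!{Bool}_\kappa^\op$.

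The only step that takes any real care --- the main \emph{obstacle}, such as it is --- is the bookkeeping identifying the pullback cocone $(f^*(X_i)\to Y)_i$ in $\kappa\!{Bool}^\op$ with the partition decomposition $(B\to B/\phi(e_i))_i$ in $\kappa\!{Bool}$: one must check that the structure map $B\to B\otimes_{\prod_j A_j}A_i$ of the pushout really is the quotient of $B$ by $\phi(e_i)$, which is exactly where one invokes that $\pi_i$ is itself a quotient map, together with the quotient/pushout compatibility. Everything else is a routine translation through the duality of standard facts about partitions in $\kappa$-Boolean algebras, so I expect the actual write-up to be quite short.
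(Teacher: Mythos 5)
Your argument is correct, and it shares the paper's overall strategy: disjointness is handled essentially identically (surjectivity of the projections, plus triviality of the pushout over $i \ne j$), and pullback-stability is in both cases reduced to the partition decomposition $B \cong \prod_i B/f(\delta_i(\top))$ (your $e_i$ is the paper's $\delta_i(\top)$ and your $\phi$ its $f$), so that everything hinges on identifying the pushout $A_i \otimes_{\prod_k A_k} B$ with $\down f(\delta_i(\top))$ compatibly with the structure map from $B$. The difference lies in how that identification is obtained. The paper does it by hand: it checks that $g_i$ factors through $\down f(\delta_i(\top))$ and then constructs an explicit inverse $h_i$ via the universal property of the pushout, verifying well-definedness and both composite identities on generators --- this computation is the bulk of its proof. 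You instead observe that $\pi_i$ is itself the principal quotient $\prod_k A_k \to (\prod_k A_k)/\delta_i(\top)$ and appeal to the compatibility of principal quotients with pushouts, $B \otimes_A (A/a) \cong B/f(a)$ with structure map the quotient $B \to B/f(a)$, which delivers the identification in one step; this is a genuine and legitimate shortening of the computational core. The one point to flag is that the paper records this compatibility (in the discussion preceding \cref{thm:lagrange}) only for $f : A \to B$ in $\kappa\!{Bool}_\kappa$, whereas you invoke it in all of $\kappa\!{Bool}$; it does hold there --- the pushout of $A \to A/\ang{(\top, a)}$ along $f$ is $B \to B/\ang{(\top, f(a))}$ in any $\kappa$-ary variety --- but you should state this general fact rather than cite the $\kappa$-presented case. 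Your treatment of the nullary case and of the descent to $\kappa\!{Bool}_\kappa^\op$ (fullness, closure of $\kappa\!{Bool}_\kappa$ under $\kappa$-ary colimits, and \cref{thm:kboolk-prod}) agrees with the paper's.
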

\begin{proof}
Clearly, product projections in $\kappa\!{Bool}$ are surjective; and for a product $\prod_k A_k$ and $i \ne j$, the pushout of the projections $\pi_i : \prod_k A_k -> A_i$ and $\pi_j : \prod_k A_k -> A_j$ is trivial, as witnessed by any $\vec{a} \in \prod_k A_k$ with $a_i = \bot$ and $a_j = \top$.  Thus, $\kappa$-ary coproducts in $\kappa\!{Bool}^\op$ are disjoint.

It remains to verify pullback-stability.  Let $A_i \in \kappa\!{Bool}$ for $i \in I$, $\abs{I} < \kappa$, and $f : \prod_i A_i -> B \in \kappa\!{Bool}$, let $\pi_i : \prod_k A_k -> A_i$ be the projections, and let
\begin{equation*}
\begin{tikzcd}
\prod_k A_k \dar["f"'] \rar["\pi_i"] & A_i \dar["f_i"] \\
B \rar["g_i"'] & A_i \otimes_{\prod_k A_k} B
\end{tikzcd}
\end{equation*}
be pushouts; we must show that the $g_i$ exhibit $B$ as the product
\begin{align*}
B = \prod_i (A_i \otimes_{\prod_k A_k} B).
\end{align*}
For each $a \in A_i$, let $\delta_i(a) \in \prod_k A_k$ be given by $\delta_i(a)_i := a$ and $\delta_i(a)_j := \bot$ for all $j \ne i$.  Then $(\delta_i(\top))_i$ form a partition in $\prod_k A_k$, whence $(f(\delta_i(\top)))_i$ form a partition in $B$.
Each $g_i(f(\delta_i(\top))) = f_i(\pi_i(\delta_i(\top))) = f_i(\top) = \top$, whence $g_i$ factors through $B/f(\delta_i(\top)) \cong \down f(\delta_i(\top))$ as the restriction $g_i|\down f(\delta_i(\top)) : \down f(\delta_i(\top)) -> A_i \otimes_{\prod_k A_k} B$.  So it is enough to show that each $g_i|\down f(\delta_i(\top))$ is an isomorphism, since then $g$ factors as
$B \cong \prod_i \down f(\delta_i(\top)) \cong \prod_i (A_i \otimes_{\prod_k A_k} B)$.  We claim that $g_i|\down f(\delta_i(\top))$ has inverse defined via the universal property of the pushout by
\begin{align*}
h_i : A_i \otimes_{\prod_k A_k} B &--> \down f(\delta_i(\top)) \\
f_i(a) &|--> f(\delta_i(a)) \\
g_i(b) &|--> f(\delta_i(\top)) \wedge b.
\end{align*}
$h_i$ is well-defined, since for $\vec{a} \in \prod_k A_k$ we have
\begin{align*}
h_i(f_i(\pi_i(\vec{a})))
&:= f(\delta_i(\pi_i(\vec{a}))) \\
&= f(\delta_i(a_i)) \\
&= f(\delta_i(\top) \wedge \vec{a}) \\
&= f(\delta_i(\top)) \wedge f(\vec{a})
=: h_i(g_i(f(\vec{a}))).
\end{align*}
For all $b \in \down f(\delta_i(\top))$, we have $h_i(g_i(b)) = f(\delta_i(\top)) \wedge b = b$; thus $h_i \circ g_i|\down f(\delta_i(\top)) = 1_{\down f(\delta_i(\top))}$.
For all generators $f_i(a) \in A_i \otimes_{\prod_k A_k} B$ and $g_i(b) \in A_i \otimes_{\prod_k A_k} B$, we have
\begin{align*}
g_i(h_i(f_i(a)))
&= g_i(f(\delta_i(a))) \\
&= f_i(\pi_i(\delta_i(a))) \\
&= f_i(a), \\
g_i(h_i(g_i(b)))
&= g_i(f(\delta_i(\top)) \wedge b) \\
&= g_i(f(\delta_i(\top))) \wedge g_i(b) \\
&= f_i(\pi_i(\delta_i(\top))) \wedge g_i(b) \\
&= f_i(\top) \wedge g_i(b) \\
&= g_i(b);
\end{align*}
thus $g_i \circ h_i = 1_{A_i \otimes_{\prod_k A_k} B}$.
\end{proof}

\begin{corollary}
\label{thm:kboolk-klimbext}
$\kappa\!{Bool}_\kappa^\op$ is a $\kappa$-complete Boolean $\kappa$-extensive category.  \qed
\end{corollary}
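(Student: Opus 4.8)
The plan is to verify the three clauses of the statement — $\kappa$-completeness, $\kappa$-extensivity, and Booleanness — in turn, each being immediate from a result already established; the corollary is essentially a bookkeeping assembly, so I will keep the argument short.

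First I would dispatch $\kappa$-completeness. Since $\kappa\!{Bool}$ is monadic over $\!{Set}$ via a $\kappa$-ary algebraic theory, it has all small colimits, so it suffices to observe that the full subcategory $\kappa\!{Bool}_\kappa \setle \kappa\!{Bool}$ is closed under colimits of diagrams of size $<\kappa$. Given such a diagram with vertices $A_i = \@K(X_i)/r_i$, $\abs{X_i} < \kappa$, $\abs{I} < \kappa$, one presents its colimit on the generator set $\bigsqcup_i X_i$ — which is $\kappa$-ary by regularity of $\kappa$ — using the union of the $r_i$ together with, for each transition map of the diagram, a $\kappa$-ary family of relations forcing the images of the generators to agree; this presentation is $\kappa$-generated. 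Hence $\kappa\!{Bool}_\kappa$ has $\kappa$-ary colimits, so $\kappa\!{Bool}_\kappa^\op$ has $\kappa$-ary limits; in particular $\kappa\!{Bool}_\kappa^\op$ is finitely complete, since $\kappa \ge \omega$.

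Next, $\kappa$-extensivity of $\kappa\!{Bool}_\kappa^\op$ is exactly the combination of \cref{thm:kboolk-prod} (which provides $\kappa$-ary coproducts in $\kappa\!{Bool}_\kappa^\op$ — note this is not subsumed by $\kappa$-completeness, since a $\kappa$-ary product in $\kappa\!{Bool}_\kappa$ is a limit there, not a colimit) and \cref{thm:kboolk-ext} (which shows those coproducts are disjoint and pullback-stable); and by \cref{thm:ext-coprod-join} this in turn makes $\kappa\!{Bool}_\kappa^\op$ finitely complete $\kappa$-subextensive, so that the adjective ``Boolean'' is applicable to it. Finally, Booleanness follows from \cref{thm:kboolk-quotient}: for each $A \in \kappa\!{Bool}_\kappa$ the subobject poset $\Sub_{\kappa\!{Bool}_\kappa^\op}(A)$ is order-isomorphic to $A$ itself, which is a $\kappa$-complete Boolean algebra — precisely the condition required of each subobject lattice.

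I do not expect any real obstacle at this stage: all of the content has been carried out in \cref{thm:kboolk-prod}, \cref{thm:kboolk-ext}, and \cref{thm:kboolk-quotient}. If one wishes to pinpoint the genuinely substantive inputs being invoked, they are the pullback-stability half of \cref{thm:kboolk-ext} — which rests on the explicit presentation of $\kappa$-ary products computed in \cref{thm:kboolk-prod} together with the partition decomposition $A \cong \prod_i A/c_i$ of a $\kappa$-Boolean algebra along a $\kappa$-ary partition — and the subobject identification \cref{thm:kboolk-quotient}, which depends on LaGrange's theorem (\cref{thm:kbool-epi}) that epimorphisms in $\kappa\!{Bool}$ are surjective.
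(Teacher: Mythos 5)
Your proposal is correct and follows essentially the same route as the paper, which assembles the corollary from exactly the same three inputs: closure of $\kappa\!{Bool}_\kappa$ under $\kappa$-ary colimits for $\kappa$-completeness, \cref{thm:kboolk-quotient} for Booleanness, and \cref{thm:kboolk-prod} together with \cref{thm:kboolk-ext} for $\kappa$-extensivity. Your added remarks (spelling out the presentation of a $\kappa$-ary colimit, and noting that coproducts in $\kappa\!{Bool}_\kappa^\op$ are not subsumed by $\kappa$-completeness) are accurate but only make explicit what the paper leaves implicit.
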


\section{Loomis--Sikorski duality}
\label{sec:loomis-sikorski}

In this section, we review the Stone-type duality between standard Borel spaces and countably presented Boolean $\sigma$-algebras.  This duality is essentially a reformulation of the Loomis--Sikorski representation theorem for Boolean $\sigma$-algebras, of which we include a self-contained proof.  This section is the only part of the paper that uses classical descriptive set theory (for which see \cite{Kcdst}).

An $\omega_1$-Boolean algebra is conventionally called a \defn{Boolean $\sigma$-algebra}.  In this context, we generally replace $\kappa$ with $\sigma$ in the notation and terminology of the preceding section; for example, we have the category $\sigma\!{Bool}_\sigma := \omega_1\!{Bool}_{\omega_1}$ of countably presented Boolean $\sigma$-algebras, the notion of $\sigma$-filter, etc.

A \defn{Borel space} (or \defn{measurable space}) is a set $X$ equipped with a (concrete) $\sigma$-algebra $\@B(X)$, i.e., a Boolean $\sigma$-subalgebra $\@B(X) \setle \@P(X)$ of the powerset, whose elements are called \defn{Borel sets} in $X$.  A \defn{Borel map} (or \defn{measurable map}) $f : X -> Y$ between two Borel spaces is a map such that for every Borel set $B \in \@B(Y)$, the preimage $f^{-1}(B)$ is in $\@B(X)$.  Letting $\!{Bor}$ denote the category of Borel spaces and Borel maps, we thus have a functor $\@B : \!{Bor}^\op -> \sigma\!{Bool}$, where $\@B(f) := f^{-1} : \@B(Y) -> \@B(X)$ for a Borel map $f : X -> Y \in \!{Bor}$.

For an arbitrary Boolean $\sigma$-algebra $A$, let $\@S_\sigma(A)$ denote the set of \defn{$\sigma$-ultrafilters} on $A$, i.e., $\sigma$-filters $U \setle A$ whose complements are $\sigma$-ideals.  These are equivalently the preimages of $\top \in 2$ under ($\sigma$-)homomorphisms $A -> 2$.  We equip $\@S_\sigma(A)$ with the Borel $\sigma$-algebra consisting of the sets
\begin{equation*}
[a] := \{U \in \@S_\sigma(A) \mid a \in U\}
\end{equation*}
for all $a \in A$.  In other words, the map $[-] : A -> \@P(\@S_\sigma(A))$ is easily seen to be a $\sigma$-homomorphism; $\@B(\@S_\sigma(A))$ is by definition its image.  For a homomorphism $f : A -> B \in \sigma\!{Bool}$, taking preimage under $f$ takes $\sigma$-ultrafilters on $B$ to $\sigma$-ultrafilters on $A$, yielding a map $\@S_\sigma(f) := f^{-1} : \@S_\sigma(B) -> \@S_\sigma(A)$; we have
$\@S_\sigma(f)^{-1}([a])
= [f(a)]$,
whence $\@S_\sigma(f)$ is a Borel map.  We thus have a functor $\@S_\sigma : \sigma\!{Bool}^\op -> \!{Bor}$.

The preceding two paragraphs are part of the general setup of a Stone-type duality between the categories $\sigma\!{Bool}$ and $\!{Bor}$, induced by the Borel structure and Boolean $\sigma$-algebra structure on the set $2$ which ``commute'' with each other; see \cite[VI~\S4]{Jstone} for the general theory of such dualities.  For an arbitrary Boolean $\sigma$-algebra $A$ and Borel space $X$, a Boolean $\sigma$-homomorphism $f : A -> \@B(X)$ is determined by the set
\begin{equation*}
\Gamma_f := \{(a, x) \in A \times X \mid x \in f(a)\},
\end{equation*}
which is required to have vertical fibers which are Borel subsets of $X$ and horizontal fibers which are $\sigma$-ultrafilters on $A$, which are the same conditions on the set
\begin{equation*}
\Gamma^g := \{(a, x) \in A \times X \mid a \in g(x)\}
\end{equation*}
determining a Borel map $g : X -> \@S_\sigma(A)$.  Thus we have a bijection
\begin{align*}
\sigma\!{Bool}(A, \@B(X)) &\cong \!{Bor}(X, \@S_\sigma(A)) \\
f &|-> (x |-> \{a \in A \mid x \in f(a)\}) \\
(a |-> \{x \in X \mid a \in g(x)\}) &<-| g
\end{align*}
which is easily seen to be natural in $A, X$, yielding a contravariant adjunction between the functors $\@B : \!{Bor}^\op -> \sigma\!{Bool}$ and $\@S_\sigma : \sigma\!{Bool}^\op -> \!{Bor}$.

The adjunction unit on the $\!{Bor}$ side consists of the Borel maps
\begin{align*}
\eta_X : X &--> \@S_\sigma(\@B(X)) = \{\text{$\sigma$-ultrafilters of Borel sets on } X\} \\
x &|--> \{B \in \@B(X) \mid x \in B\}
\end{align*}
for each Borel space $X$, taking points to principal ultrafilters; $\eta_X$ is neither injective nor surjective in general.%
\footnote{For example, consider the Borel equivalence relation $\#E_0$ of equality mod finite on $X := 2^\#N$, and equip $X$ with the $\sigma$-algebra of $\#E_0$-invariant Borel sets.  Then $\eta(x) = \eta(y) \iff x \mathrel{\#E_0} y$, while the complement of the image of $\eta$ contains the conull ultrafilter (by ergodicity) for the standard product measure on $2^\#N$.}
On the $\sigma\!{Bool}$ side, the unit consists of the homomorphisms from above
\begin{equation*}
[-] = [-]_A : A -> \@B(\@S_\sigma(A))
\end{equation*}
for each $A \in \sigma\!{Bool}$.  $[-]_A$ is surjective by definition, and injective iff $A$ admits enough $\sigma$-ultrafilters to separate points, or equivalently (by considering implications $a -> b$) every $a < \top \in A$ is outside of some $\sigma$-ultrafilter; such $A$ are precisely (isomorphic copies of) the concrete $\sigma$-algebras of sets, i.e., $\@B(X)$ for $X \in \!{Bor}$.  In other words, the adjunction between $\@B, \@S_\sigma$ is \defn{idempotent} (see e.g., \cite[VI~4.5]{Jstone}), hence restricts to an adjoint equivalence
\begin{equation*}
\sigma\!{Alg}^\op \cong \!{Bor}'
\end{equation*}
between the full subcategories $\sigma\!{Alg} \setle \sigma\!{Bool}$ of (isomorphic copies of) concrete $\sigma$-algebras, and $\!{Bor}' \setle \!{Bor}$ of Borel spaces $X$ such that $\eta_X$ is an isomorphism.

The classical \defn{Loomis--Sikorski representation theorem} (see e.g., \cite[29.1]{Sik}) states that every Boolean $\sigma$-algebra is a \emph{quotient} of a concrete $\sigma$-algebra.  By standard universal algebra,%
\footnote{$\Longrightarrow$: In particular, every free algebra $\@K(X)$ is a quotient of a concrete $\sigma$-algebra; by projectivity of free algebras, $\@K(X)$ is then itself a concrete $\sigma$-algebra, whence so is every countably presented quotient $\@K(X)/a \cong \down a$.

$\Longleftarrow$: For any free algebra $\@K(X)$, for any $a < \top \in \@K(X)$, letting $Y \setle X$ be all countably many generators appearing in $a$, we get a $\sigma$-ultrafilter $U \setle \@K(Y)$ not containing $a$, which easily extends to a $\sigma$-ultrafilter on $\@K(X)$; this shows that $\@K(X)$ is concrete, which is enough since every algebra is a quotient of a free one.}
this is equivalent to the following formulation, which is more relevant for our purposes.  We include a self-contained proof for the reader's convenience.%
\footnote{This proof is by reduction to the Rasiowa--Sikorski lemma.  Another (easy) reduction is to the completeness theorem for $\@L_{\omega_1\omega}$.  All three results are closely related, and ultimately boil down to a Baire category argument.}

\begin{theorem}[Loomis--Sikorski]
$\sigma\!{Bool}_\sigma \setle \sigma\!{Alg}$, i.e., every countably presented Boolean $\sigma$-algebra admits enough $\sigma$-ultrafilters to separate points.
\end{theorem}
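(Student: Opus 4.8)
The plan is to reduce the statement to the case of a free $\sigma$-Boolean algebra on countably many generators, and to settle that case by a Baire-category argument after observing that any \emph{single} element of such an algebra is built from the generators using only countably many joins. For the reduction: write $A \in \sigma\!{Bool}_\sigma$ as $\@K(X)/r \cong \down r$ for a countable set $X$ and $r \in \@K(X)$. If $X$ is finite then $\@K(X)$ is the finite free Boolean algebra on $X$, hence already a power-set algebra, and there is nothing to prove; so assume $X = \#N$. I claim it suffices to show that the canonical $\sigma$-homomorphism $\psi : \@K(\#N) -> \@B(2^\#N)$ with $\psi(x_n) = \{z \in 2^\#N \mid z(n) = 1\}$ is injective. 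Indeed, $\psi$ is automatically surjective (its image is a $\sigma$-subalgebra of $\@B(2^\#N)$ containing a $\sigma$-generating set), so injectivity makes $\@K(\#N) \cong \@B(2^\#N)$ concrete; and then $A \cong \down r$ is carried by $\psi$ isomorphically onto the $\sigma$-subalgebra $\{B \in \@B(2^\#N) \mid B \subseteq \psi(r)\}$ of $\@P(\psi(r))$, so $A$ too is isomorphic to a concrete $\sigma$-algebra of sets, which by the discussion in this section is precisely the assertion that $A$ has enough $\sigma$-ultrafilters to separate points.

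To prove $\psi$ injective, I must show $\psi(c) \neq \emptyset$ for every $c \in \@K(\#N) \setminus \{0\}$. Here I would invoke the standard transfinite construction of $\@K(\#N)$: the elements admitting a well-founded representation as a $\sigma$-Boolean term over the generators $x_n$ (using $\neg$ and countable $\bigvee$) form a $\sigma$-subalgebra containing the $x_n$, hence are all of $\@K(\#N)$. Fix such a term $t$ with $[t] = c$; since the parse tree of $t$ is a countably branching well-founded tree, it has only countably many nodes, in particular only countably many $\bigvee$-subterms. Now apply the Rasiowa--Sikorski lemma to the underlying Boolean algebra of $\@K(\#N)$, with base element $c \neq 0$ and with the countable family of subsets $\{[s_i] \mid i \in \#N\}$ indexed by the subterms $\bigvee_i s_i$ of $t$ (each of which has supremum $[\bigvee_i s_i]$ in $\@K(\#N)$): this yields a Boolean ultrafilter $U \ni c$ such that, for every such subterm, $[\bigvee_i s_i] \in U$ implies $[s_i] \in U$ for some $i$. (For a fully self-contained proof one simply proves Rasiowa--Sikorski in this instance, which is the Baire category theorem applied to the compact Hausdorff Stone space of $\@K(\#N)$: for each of the countably many families in question, the set of ultrafilters at which the corresponding join ``fails'' is closed and nowhere dense.)

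It remains to convert $U$ into a point of $\psi(c)$. Let $z \in 2^\#N$ be defined by $z(n) = 1 \iff x_n \in U$. A straightforward induction on the subterms $s$ of $t$ shows $[s] \in U \iff z \in \psi([s])$: the leaf case is the definition of $z$; the $\neg$ case uses that $U$ is an ultrafilter; and the $\bigvee$ case uses the Rasiowa--Sikorski property in the forward direction and upward-closure of $U$ in the backward direction. Taking $s = t$ gives $z \in \psi(c)$, so $\psi(c) \neq \emptyset$, as required. The one genuinely delicate point — and the reason Rasiowa--Sikorski, which handles only countably many joins at once, suffices even though $\@K(\#N)$ has uncountably many join relations — is the observation that a fixed element is ``countably deep'', so that fixing a term for it isolates the countably many joins that actually have to be respected; the rest is bookkeeping.
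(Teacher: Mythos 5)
Your proof is correct and is essentially the paper's own argument: both hinge on the observation that a fixed element of the countably generated free Boolean $\sigma$-algebra is the evaluation of a term with only countably many $\bigvee$-subterms, so that a Rasiowa--Sikorski/Baire-category argument produces an ultrafilter respecting exactly those countably many joins, which then determines a $\sigma$-ultrafilter (equivalently, a point of Cantor space) witnessing that the given element is not $\bot$. The only differences are cosmetic: the paper runs the Baire-category argument in the (Polish) Stone space of the countable Boolean subalgebra generated by the subterms and directly produces a separating $\sigma$-ultrafilter, whereas you invoke Rasiowa--Sikorski on the full free algebra and package the conclusion as injectivity of the canonical homomorphism onto the Borel $\sigma$-algebra of Cantor space.
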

\begin{proof}
Let $\@K(X)/r \cong \down r \in \sigma\!{Bool}_\sigma$, with $X$ countable and $r \in \@K(X)$.  Let $a \in \down r$ with $a < \top_{\down r} = r$; we must find a $\sigma$-ultrafilter $U \setle \down r$ not containing $a$.  Since $a < r$, $r -> a < \top$; so it is enough to find $U \in \@S_\sigma(\@K(X))$ not containing $r -> a$, since then $U \cap \down r \in \@S_\sigma(\down r)$ not containing $a$.  So we may assume to begin with that $r = \top$ and $a < \top_{\@K(X)}$, and find $U \in \@S_\sigma(\@K(X))$ not containing $a$.

Let $s$ be a Boolean $\sigma$-algebra term (i.e., expression built from $\bigwedge, \bigvee, \neg, \top, \bot$) in $X$ evaluating to $a \in \@K(X)$; for convenience, we regard $\bigwedge$ as an abbreviation for $\neg \bigvee \neg$.  Let $T$ be a countable set of Boolean $\sigma$-algebra terms containing $s$ and each $x \in X$ and closed under \emph{finite} Boolean combinations (again without $\wedge$) and subterms.  For a term $t$, we let $\floor{t} \in \@K(X)$ be its evaluation in $\@K(X)$.  Then $\floor{T} := \{\floor{t} \mid t \in T\} \setle \@K(X)$ is a countable ($\omega$-)Boolean subalgebra containing $a$ and the generators $X$.  Each $\sigma$-ultrafilter $U \setle \@K(X)$ restricts to an ($\omega$-)ultrafilter $U \cap \floor{T} \setle \floor{T}$, which determines $U$ since $X \setle \floor{T}$; conversely, it is easy to show by induction on terms $t \in T$ that an arbitrary ($\omega$-)ultrafilter $V \setle \floor{T}$ is the restriction of some $\sigma$-ultrafilter $U \setle \@K(X)$ (namely, the unique $U$ with the same restriction as $V$ to $X$) iff
\begin{itemize}
\item[($*$)]  for every $t = \bigvee_i t_i \in T$, if $\floor{t} \in V$, then $\floor{t_i} \in V$ for some $i$.
\end{itemize}
By Baire category, the set of all such ultrafilters $V \setle \floor{T}$ is a dense $G_\delta$ in the Stone space $\@S_\omega(\floor{T})$ of $\floor{T}$ (i.e., the compact Polish space of all ultrafilters on $\floor{T}$), since for each $t = \bigvee_i t_i$ as in ($*$), the basic clopen set $[\floor{t}] := \{V \in \@S_\omega(\floor{T}) \mid \floor{t} \in V\}$ is the closure of $\bigcup_i [\floor{t_i}]$ (whence the set of $V$ obeying ($*$) for that $t$ is a dense open set $\neg [\floor{t}] \cup \bigcup_i [\floor{t_i}]$).  Since $\top > a \in \floor{T}$, $\neg [a] \setle \@S_\omega(\floor{T})$ is nonempty clopen, hence contains some $V$ satisfying ($*$), which extends to $U \in \@S_\sigma(\@K(X))$ not containing $a$.
\end{proof}

Hence, the above equivalence $\sigma\!{Alg}^\op \cong \!{Bor}'$ restricts to an equivalence between $\sigma\!{Bool}_\sigma^\op$ and the full subcategory of $\!{Bor}$ consisting of those Borel spaces (isomorphic to ones) of the form $\@S_\sigma(A)$ for some countably presented Boolean $\sigma$-algebra $A$.  For a free algebra $\@K(X)$, $X$ countable, $\sigma$-ultrafilters $U \in \@S_\sigma(\@K(X))$ are determined by their restrictions to $X$, i.e., $\@S_\sigma(\@K(X)) \cong 2^X$, with Borel $\sigma$-algebra generated by the subbasic Borel sets $[x] := \{U \in 2^X \mid x \in U\}$ for $x \in X$, i.e., the standard Borel $\sigma$-algebra induced by the product topology.  Passing to a countably presented quotient $\@K(X)/r \cong \down r$ amounts to restricting to a Borel subspace $[r] \setle 2^X$.  Thus the spaces $\@S_\sigma(A)$ for $A \in \sigma\!{Bool}_\sigma$ are exactly the \defn{standard Borel spaces} (isomorphic copies of Borel subspaces of Cantor space $2^\#N$).  Letting $\!{SBor} \setle \!{Bor}$ denote the full subcategory of standard Borel spaces, we have

\begin{corollary}[Loomis--Sikorski duality]
The functors $\@B, \@S_\sigma$ restrict to an adjoint equivalence
\begin{align*}
\sigma\!{Bool}_\sigma^\op \cong \!{SBor}.
&\qed
\end{align*}
\end{corollary}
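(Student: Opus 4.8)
The plan is to obtain the corollary by combining the idempotent adjunction $\@B \dashv \@S_\sigma$ with the preceding theorem, the remaining work being bookkeeping about which full subcategories correspond. First I would recall that idempotency of the adjunction gives the adjoint equivalence $\sigma\!{Alg}^\op \cong \!{Bor}'$ already noted, between the full subcategory $\sigma\!{Alg} \setle \sigma\!{Bool}$ of concrete $\sigma$-algebras and the full subcategory $\!{Bor}' \setle \!{Bor}$ of Borel spaces on which the unit $\eta$ is invertible. By the preceding theorem, $\sigma\!{Bool}_\sigma \setle \sigma\!{Alg}$, so this equivalence restricts to one between $\sigma\!{Bool}_\sigma^\op$ and its essential image under $\@S_\sigma$, i.e., the full subcategory of $\!{Bor}$ on those Borel spaces isomorphic to some $\@S_\sigma(A)$ with $A \in \sigma\!{Bool}_\sigma$.

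It then remains to identify this essential image with $\!{SBor}$. For a countable set $X$, any function $X -> 2$ extends uniquely to a $\sigma$-homomorphism $\@K(X) -> 2$, so restriction to generators is a bijection $\@S_\sigma(\@K(X)) \cong 2^X$ carrying each generating Borel set $[x]$ to the subbasic clopen set $\{U \mid x \in U\}$; hence $\@S_\sigma(\@K(X))$ is Borel-isomorphic to $2^X \cong 2^\#N$ with its standard Borel structure. For general $A = \@K(X)/r \cong \down r$ with $X$ countable and $r \in \@K(X)$, naturality of $\@S_\sigma$ applied to the quotient map $\@K(X) ->> A$ identifies $\@S_\sigma(A)$ with the Borel subspace $[r] \setle \@S_\sigma(\@K(X)) \cong 2^X$, so $\@S_\sigma(A)$ is (isomorphic to) a Borel subspace of Cantor space, i.e., standard Borel. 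Conversely, every standard Borel space is by definition a Borel subspace of $2^\#N \cong \@S_\sigma(\@K(\#N))$, hence equals $[r] \cong \@S_\sigma(\@K(\#N)/r)$ for the $r \in \@K(\#N)$ naming it. Thus the essential image of $\@S_\sigma$ on $\sigma\!{Bool}_\sigma^\op$ is exactly $\!{SBor}$, and the restricted equivalence is implemented by $\@B$ and $\@S_\sigma$ as asserted.

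The main obstacle has already been cleared in the preceding theorem, whose Baire category argument supplies the $\sigma$-ultrafilters needed to make $\eta$ invertible on countably presented algebras; everything after that is the routine verification above that the relevant full subcategories match up under $\@B$ and $\@S_\sigma$, together with the elementary identification $\@S_\sigma(\@K(X)) \cong 2^X$ of Borel spaces for countable $X$.
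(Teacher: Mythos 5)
Your proposal is correct and takes essentially the same route as the paper: restrict the idempotent adjunction's equivalence $\sigma\!{Alg}^\op \cong \!{Bor}'$ to $\sigma\!{Bool}_\sigma^\op$ using the preceding Loomis--Sikorski theorem, then identify the essential image with $\!{SBor}$ via the identification $\@S_\sigma(\@K(X)) \cong 2^X$ for countable $X$ and the correspondence between countably presented quotients $\@K(X)/r$ and Borel subspaces $[r] \setle 2^X$.
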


This reduces \cref{thm:intro-sbor-init} to \cref{thm:intro-kboolk-init} for $\kappa = \omega_1$.

\section{Almost universal $\@L_{\kappa\kappa}$-theories}
\label{sec:authy}

In this section, we introduce the infinitary first-order logic whose theories present $\kappa$-complete Boolean subextensive categories via the standard ``syntactic category'' construction from categorical logic.  See \cite{MR}, \cite[D1]{Jeleph} for general background on categorical logic and syntactic categories (also \cite{Cscc} for a treatment of the logic $\@L_{\omega_1\omega}$ similar to that given here).  We then give a presentation of $\kappa\!{Bool}_\kappa^\op$ via such a theory, thereby characterizing $\kappa$-continuous extensive functors $\kappa\!{Bool}_\kappa^\op -> \!C$ for arbitrary $\!C \in \kappa\&{LimBSExtCat}$, which we use to prove \cref{thm:intro-kboolk-init}.

The logic in question is the fragment of the full $\kappa$-ary first order logic $\@L_{\kappa\kappa}$ consisting of formulas which do not contain $\forall$ and only contain $\exists$ when the existential is already provably unique relative to the background theory.  We call such formulas \emph{almost quantifier-free}, and the corresponding theories (inductively built from sentences which are universally quantified almost quantifier-free formulas relative to the predecessor theory) \emph{almost universal}.  This ``provably unique $\exists$'' is well-known in categorical logic as an essential part of \emph{finite-limit logic} or \emph{Cartesian logic}, the fragment of finitary first-order logic $\@L_{\omega\omega}$ (due to Coste \cite{Cos}; see also \cite[D1.3.4]{Jeleph}) used to present finitely complete categories.  The combination with arbitrary quantifier-free Boolean formulas seems to be new (although it contains Johnstone's \cite{J79} \emph{disjunctive logic}, used to present extensive categories without Booleanness).  Because of this, and because of the somewhat delicate inductions involving the ``provably unique $\exists$'' which may not be well-known outside categorical logic, we will develop the syntactic category in some detail, even though all the ideas involved are essentially standard.

\subsection{Formulas and theories}

Recall from \cref{sec:cat} our standing assumption that $\!{Set}'_\kappa$ is the full subcategory of $\!{Set}$ consisting of all $\kappa$-ary subsets of a fixed set $\#U$ of size $\ge \kappa$.

Let $\@L$ be a \defn{(single-sorted)}%
\footnote{We will only need single-sorted theories for our purposes; the multi-sorted generalization is straightforward.}
\defn{$\kappa$-ary first-order relational language}, consisting of, for each $X \in \!{Set}'_\kappa$, a set $\@L(X)$ of \defn{$X$-ary relation symbols}.%
\footnote{We note that we are using ``$X$-ary'' with a different meaning than ``$\kappa$-ary'': the former means indexed by $X$, whereas the latter means indexed by a set of size $<\kappa$.}

The \defn{$\@L_{\kappa\kappa}$-formulas} are defined inductively as follows.  The variables of formulas will be elements of the sets $X \in \!{Set}'_\kappa$; we only consider formulas whose free variables all belong to some $X \in \!{Set}'_\kappa$.  We will keep track of free variables of formulas; for $X \in \!{Set}'_\kappa$, we write $\@L_{\kappa\kappa}(X)$ for the set of $\@L_{\kappa\kappa}$-formulas $\phi$ with free variables from $X$ ($X$ is called the \defn{context} of $\phi$).
\begin{itemize}
\item  For $X \in \!{Set}'_\kappa$, a $Y$-ary tuple of variables $\vec{x} \in X^Y$ where $Y \in \!{Set}'_\kappa$, and a $Y$-ary relational symbol $R \in \@L(Y)$, we have an atomic formula $R(\vec{x}) \in \@L_{\kappa\kappa}(X)$.
\item  For $X \in \!{Set}'_\kappa$ and $x, y \in X$, we have an atomic formula $(x = y) \in \@L_{\kappa\kappa}(X)$.
\item  For $X, I \in \!{Set}'_\kappa$ and $\phi, \phi_i \in \@L_{\kappa\kappa}(X)$ for each $i \in I$, we have $\neg \phi, \bigvee_{i \in I} \phi_i, \bigwedge_{i \in I} \phi_i \in \@L_{\kappa\kappa}(X)$ (when $I = \emptyset$, we write $\bot := \bigvee_{i \in \emptyset} \phi_i$ and $\top := \bigwedge_{i \in \emptyset} \phi_i$; when $I = n \in \#N$, we write $\phi_0 \vee \dotsb \vee \phi_{n-1} := \bigvee_{i \in n} \phi_i$, and similarly for $\wedge$).
\item  For $X \in \!{Set}'_\kappa$, $\phi \in \@L_{\kappa\kappa}(X)$, and $X = Y \cup Z$, we have $(\exists Y) \phi, (\forall Y) \phi \in \@L_{\kappa\kappa}(Z)$.%
\footnote{We allow $Y \cap Z \ne \emptyset$ so that a formula may always be regarded as having more free variables.}
\end{itemize}
We identify as usual two formulas $\phi, \psi \in \@L_{\kappa\kappa}(X)$ in the same context if they differ only in change of bound variables.
We denote \defn{variable substitution} by
\begin{equation*}
\@L_{\kappa\kappa}(Y) \ni \phi |--> [f]\phi \in \@L_{\kappa\kappa}(X)
\end{equation*}
for $f : Y -> X \in \!{Set}'_\kappa$; we automatically extend $f$ in the notation $[f]\phi$ by the identity map if necessary (so that e.g., $[x |-> y]\phi$ makes sense even if $\phi$ has free variables besides $x$).
An \defn{$\@L_{\kappa\kappa}$-theory} $\@T$ is a set of $\@L_{\kappa\kappa}$-sentences $\@T \setle \@L_{\kappa\kappa}(\emptyset)$.

We use a Gentzen sequent calculus-type proof system for $\@L_{\kappa\kappa}$.  We will not give the full details, for which see e.g., \cite[D1.3]{Jeleph}; however, we will mention some of the key features.  Sentences being proved are always universally quantified implications or \defn{sequents} $(\forall X) (\phi => \psi)$ (sometimes written $\phi |-_X \psi$) between two formulas $\phi, \psi \in \@L_{\kappa\kappa}(X)$ in the same context.  Since we are dealing with an infinitary logic, it is important to note that we do \emph{not} include any ``complete distributivity'' or ``axiom of choice''-type inference rules connecting $\bigwedge, \bigvee$ or $\bigwedge, \exists$ (as would be required to make the proof system complete with respect to the usual semantics in set-based models), \emph{except} for \eqref{AC} below.  The actual inference rules are the obvious infinitary generalizations of \cite[D1.3.1(a--h)]{Jeleph} (where the rules (f), (g) must allow quantifiers over multiple variables) and the law of excluded middle, together with the following axiom schema:%
\footnote{This expresses a version of the axiom of choice.  However, we will restrict below the existential quantifier $\exists$ so that the choice is always unique.}
\begin{align}
\phantomsection
\tag{AC}
\label{AC}
(\forall Z) \left(\bigwedge_{i \in I} (\exists Y_i) \phi_i => (\exists \bigcup_i Y_i) (\bigwedge_i \phi_i)\right)
\end{align}
for $X = \bigsqcup_i Y_i \sqcup Z \in \!{Set}'_\kappa$ (note the disjoint union) and $\phi_i \in \@L_{\kappa\kappa}(Y_i \sqcup Z)$.
We write as usual $\@T |- \sigma$ for a sequent $\sigma$ if it is provable from the theory $\@T$.
We also slightly abuse terminology by referring to provability of certain sentences which are not sequents, with the obvious meanings: a universally quantified bi-implication $(\forall X) (\phi <=> \psi)$ is provable iff both implications are; a general universally quantified formula $(\forall X) \phi$ is provable iff $(\forall X) (\top => \phi)$ is; etc.

Given a theory $\@T$, we inductively define the class of \defn{$\@T$-almost quantifier-free} (or \defn{$\@T$-aqf}) $\@L_{\kappa\kappa}$-formulas as follows, where by a \defn{$\@T$-aqf sequent} we mean one between two $\@T$-aqf formulas: a formula $\phi$ is $\@T$-aqf if $\phi$ does not contain $\forall$, and every existential subformula $(\exists Y) \psi$ occurring in the construction of $\phi$, where $\psi \in \@L_{\kappa\kappa}(X)$ with $X = Y \cup Z \in \!{Set}'_\kappa$, is \defn{$\@T$-provably unique}, meaning
\begin{align*}
\@T |- (\forall X \sqcup Y') (\psi \wedge [Y -> Y']\psi => \bigwedge_{y \in Y} (y = f(y)))
\end{align*}
where $Y \cong Y' \in \!{Set}'_\kappa$ is a copy of $Y$ disjoint from $X$,
and moreover this proof itself only involves $\@T$-aqf sequents.  A theory $\@T$ is itself \defn{almost universal} if there is a well-founded relation $\prec$ on $\@T$ such that each sentence in $\@T$ is an aqf sequent with respect to its $\prec$-predecessors.  In other words, $\@T$ can be constructed by repeatedly adding aqf sequents with respect to the preexisting theory.

We will only be considering aqf formulas and almost universal theories.  \emph{Thus, we henceforth restrict the proof system to proofs involving only aqf sequents; in particular, we drop the inference rules for $\forall$ \cite[D1.3.1(g)]{Jeleph}.  When we say $\@T$-provable, we mean using only $\@T$-aqf sequents.}

Let $\@L_{\kappa\kappa}^{\@T\aqf}(X) \setle \@L_{\kappa\kappa}(X)$ denote the subset of $\@T$-aqf formulas.  For $\phi \in \@L_{\kappa\kappa}^{\@T\aqf}(X)$, let $[\phi] = [\phi]_\@T$ denote its $\@T$-provable equivalence class, i.e.,
\begin{align*}
[\phi] = [\psi]  \coloniff  \@T |- (\forall X) (\phi <=> \psi)
\end{align*}
(using only $\@T$-aqf sequents).  Let
\begin{equation*}
\@L_{\kappa\kappa}^{\@T\aqf}(X)/\@T \in \kappa\!{Bool}
\end{equation*}
denote the \defn{Lindenbaum--Tarski algebra} of $\@T$-equivalence classes of $\@T$-aqf formulas, partially ordered by $\@T$-provable implication:
\begin{align*}
[\phi] \le [\psi] \coloniff  \@T |- (\forall X) (\phi => \psi),
\end{align*}
and with $\kappa$-Boolean operations given by the logical connectives (which are easily seen to satisfy the $\kappa$-Boolean algebra axioms mod $\@T$-provability).

As $X$ varies, these algebras are related via the variable substitution homomorphisms
\begin{align*}
[f] : \@L_{\kappa\kappa}^{\@T\aqf}(X)/\@T --> \@L_{\kappa\kappa}^{\@T\aqf}(Y)/\@T \in \kappa\!{Bool}
\end{align*}
for $f : X -> Y \in \!{Set}'_\kappa$.  We thus have a functor%
\footnote{By adding some operations to encode equality as well as provably unique $\exists$, we can enhance this functor to an ``aqf hyperdoctrine'' (see \cite{Law}), which can be used in place of the syntactic category below.  We have chosen the syntactic categories route, because the literature on syntactic categories seems to be better established.}
\begin{align*}
\@L_{\kappa\kappa}^{\@T\aqf}/\@T : \!{Set}'_\kappa --> \kappa\!{Bool}.
\end{align*}

\subsection{Syntactic categories and models}

Let $\@T$ be an almost universal $\@L_{\kappa\kappa}$-theory over a language $\@L$.  The \defn{syntactic category} $\ang{\@L \mid \@T}$ has:
\begin{itemize}
\item  objects: pairs $(X, \alpha)$ where $X \in \!{Set}'_\kappa$ and $\alpha \in \@L_{\kappa\kappa}^{\@T\aqf}(X)$;
\item  morphisms $(X, \alpha) -> (Y, \beta)$: $\@T$-provable equivalence classes $[\phi] \in \@L_{\kappa\kappa}^{\@T\aqf}(X \sqcup Y)/\@T$,%
\footnote{We may either fix a disjoint union for each $X, Y$ beforehand, or else consider indexed families of equivalence classes of formulas, one for each choice of disjoint union $X \sqcup Y$ equipped with injections $u : X `-> X \sqcup Y$ and $v : Y `-> X \sqcup Y$, which are compatible via substitution along the canonical bijections between different disjoint unions.  For simplicity, we will generally abuse notation and assume $X, Y$ are already disjoint, so that we may take $X \sqcup Y := X \cup Y$ (otherwise we would have to write e.g., $[u]\alpha \wedge [v]\beta$ instead of $\alpha \wedge \beta$ in $\sigma$).}
such that ``$\@T$ proves that $\phi$ is the graph of a function $\alpha -> \beta$'', i.e.,
\begin{alignat*}{3}
\@T &|- (\forall X \sqcup Y \sqcup Y') \Big(& \phi \wedge [Y -> Y']\phi &=> \alpha \wedge \beta \wedge \bigwedge_{Y \ni y |-> y' \in Y'} (y = y') & \Big) =: \sigma, \\
\@T \cup \{\sigma\} &|- (\forall X) \Big(& \alpha &=> (\exists Y) \phi & \Big) =: \tau,
\end{alignat*}
where $Y' \cong Y$ is a disjoint copy of $Y$ (note that $\tau$ is $(\@T \cup \{\sigma\})$-aqf).
\end{itemize}

\begin{proposition}
\label{thm:lkk-syncat}
$\ang{\@L \mid \@T}$ is a $\kappa$-complete Boolean $\kappa$-subextensive category, with:
\begin{itemize}
\item  identity $1_{(X, \alpha)} : (X, \alpha) -> (X, \alpha)$ given by
$[\alpha \wedge \bigwedge_{X \ni x |-> x' \in X'} (x = x')]$ (where $X'$ is a disjoint copy of $X$);
\item  composition of $[\phi] : (X, \alpha) -> (Y, \beta)$ and $[\psi] : (Y, \beta) -> (Z, \gamma)$ given by $[(\exists Y) (\phi \wedge \psi)]$;
\item  product of $(X_i, \alpha_i)$, for $i \in I \in \!{Set}'_\kappa$, given by $(\bigsqcup_i X_i, \bigwedge_i \alpha_i)$;
\item  equalizer of $[\phi], [\psi] : (X, \alpha) -> (Y, \beta)$ given by $(X, (\exists Y) (\phi \wedge \psi))$;
\item  a morphism $[\phi] : (X, \alpha) -> (Y, \beta)$ monic iff $(\exists X) \phi$ is $\@T$-provably unique;
\item  subobjects of $(X, \alpha)$ given by
\begin{align*}
\Sub_{\ang{\@L \mid \@T}}(X, \alpha) &\cong \down [\alpha] = \{[\phi] \mid \@T |- (\forall X) (\phi => \alpha)\} \setle \@L_{\kappa\kappa}^{\@T\aqf}(X)/\@T \\
(1_{(X, \phi)} : (X, \phi) `-> (X, \alpha)) &<-| [\phi] \\
([\psi] : (Y, \beta) `-> (X, \alpha)) &|-> [(\exists Y) \psi];
\end{align*}
\item  join of a $\kappa$-ary family of $(X, \phi_i) `-> (X, \alpha)$ given by $(X, \bigvee_i \phi_i) `-> (X, \alpha)$;
\item  nonempty meet of $(X, \phi_i) `-> (X, \alpha)$ given by $(X, \bigwedge_i \phi_i) `-> (X, \alpha)$;
\item  complement of $(X, \phi) `-> (X, \alpha)$ given by $(X, \alpha \wedge \neg \phi) `-> (X, \alpha)$;
\item  pullback of a subobject $(X, \phi) `-> (X, \top)$ corresponding to $[\phi] \in \@L_{\kappa\kappa}^{\@T\aqf}(X)/\@T$ along a morphism of the form $(\pi_{f(x)})_{x \in X} : (Y, \top) \cong (1, \top)^Y -> (1, \top)^X \cong (X, \top)$ for some $f : X -> Y$, where $\pi_y : \prod_{y \in Y} (1, \top) -> (1, \top)$ is the $y$th projection, given by the variable substitution $[f] : \@L_{\kappa\kappa}^{\@T\aqf}(X)/\@T -> \@L_{\kappa\kappa}^{\@T\aqf}(Y)/\@T$.
\end{itemize}
\end{proposition}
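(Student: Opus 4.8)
The plan is to verify each of the listed structures directly from the logical calculus, working systematically through the clauses. The guiding principle is that $\ang{\@L \mid \@T}$ is the standard syntactic category construction, so almost every verification reduces to exhibiting a formula with the required provable properties and then checking those properties using the aqf sequent calculus. First I would verify that the proposed identities and composites are well-defined morphisms—i.e., that the formulas $\alpha \wedge \bigwedge(x = x')$ and $(\exists Y)(\phi \wedge \psi)$ satisfy the ``function graph'' conditions $\sigma, \tau$—and that composition is associative and unital modulo $\@T$-provable equivalence. The only subtlety here is that composites involve an $\exists$, so one must check at each stage that this existential is $\@T$-provably unique (which follows from functionality of the factors) and that the uniqueness proof itself uses only aqf sequents; the axiom schema \eqref{AC} is what lets one combine provably-unique existentials.

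Next I would handle the limits. For the terminal object $(1, \top)$ and for products $(\bigsqcup_i X_i, \bigwedge_i \alpha_i)$, the required universal property is a direct translation: a cone is a compatible family of function-graph formulas into the factors, and conjoining them (in the disjoint-union context) gives the unique mediating morphism, with uniqueness again flowing from \eqref{AC}. For equalizers of $[\phi], [\psi] : (X,\alpha) \to (Y,\beta)$, I would check that the subobject $(X, (\exists Y)(\phi \wedge \psi)) \hookrightarrow (X, \alpha)$ has the right property: a morphism into $(X,\alpha)$ equalizes $[\phi]$ and $[\psi]$ iff it factors through this subobject, which is a formal manipulation of the graph conditions. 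Combined with the terminal object, this gives finite limits; arbitrary $\kappa$-ary limits then follow since $\kappa$-ary products plus equalizers suffice. Along the way I would record the characterization of monomorphisms—$[\phi]$ is monic iff $(\exists X)\phi$ is $\@T$-provably unique—by computing the kernel pair and noting it is the diagonal exactly under that condition.

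With limits in hand, the description of subobjects is essentially bookkeeping: a subobject of $(X,\alpha)$ is a mono $[\psi] : (Y,\beta) \hookrightarrow (X,\alpha)$, and pushing forward along $\psi$ (taking $[(\exists Y)\psi]$) gives an element of $\down[\alpha]$; conversely $[\phi] \le [\alpha]$ gives the inclusion $(X,\phi) \hookrightarrow (X,\alpha)$, and one checks these are mutually inverse order-isomorphisms, using that $[\psi]$ monic forces $(\exists Y)\psi$ provably unique so that $(Y,\beta)$ is recovered up to iso from its image. Since $\@L_{\kappa\kappa}^{\@T\aqf}(X)/\@T$ is a $\kappa$-Boolean algebra and $\down[\alpha]$ is a principal down-set in it, $\Sub(X,\alpha)$ is automatically a $\kappa$-complete Boolean algebra with joins $\bigvee_i \phi_i$, nonempty meets $\bigwedge_i \phi_i$, and complements $\alpha \wedge \neg\phi$ computed as the corresponding logical connectives; the only thing to verify categorically is that these coincide with the categorical joins/meets/complements of subobjects, and that finite coproducts (strict initial object, disjoint pullback-stable binary coproducts of disjoint subobjects) exist—here disjointness of $[\phi] \wedge [\psi] = \bot$ gives $(X, \phi \vee \psi)$ as the join, pullback-stability is the distributivity clause, and the last bulleted item (pullback of $[\phi]$ along a reindexing $[f]$ is $[f]\phi$) is precisely the statement that $\Sub$ is functorial via variable substitution, which is immediate from how $\exists$ interacts with substitution along injections versus general maps.

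The main obstacle I anticipate is the careful management of the ``$\@T$-provably unique $\exists$'' side condition throughout: every time a morphism is built using an existential quantifier (composition, mediating maps into products, the image $(\exists Y)\psi$ of a subobject), one must not only prove the existential is provably unique but ensure the proof stays inside the aqf fragment—so that the resulting formula is genuinely $\@T$-aqf and the construction does not secretly leave the syntactic category. This is where \eqref{AC} does the real work, and where an inattentive argument would break; the rest of the proposition is a (long but routine) sequence of sequent-calculus verifications of the kind standard in categorical logic (cf.\ \cite[D1.3]{Jeleph}).
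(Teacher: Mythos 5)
Your proposal takes essentially the same route as the paper, which likewise treats the verification as a long but routine sequence of explicit sequent-calculus constructions (deferring details to \cite[D1.4]{Jeleph}, \cite[8.2.1]{MR}) and singles out \eqref{AC} as the one non-obvious ingredient, used exactly where you place it: in assembling the mediating morphism $[\bigwedge_i \phi_i]$ into a product. One small correction: \eqref{AC} is needed there for the \emph{totality} condition $\beta \Rightarrow (\exists \bigsqcup_i X_i)(\bigwedge_i \phi_i)$ (combining the componentwise provably-unique existentials), not for uniqueness of the mediating morphism, which already follows from the provable uniqueness of each $(\exists X_i)\phi_i$.
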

\begin{proof}
By straightforward constructions of explicit proofs witnessing the necessary conditions; see \cite[D1.4]{Jeleph}, \cite[8.2.1]{MR} for details (in the finitary case; the $\kappa$-ary case is analogous).  We only comment on the role of \eqref{AC}, which is used to check the universal property of the product $\prod_i (X_i, \alpha_i) = (\bigsqcup_i X_i, \bigwedge_i \alpha_i)$: given a cone $([\phi_i] : (Y, \beta) -> (X_i, \alpha_i))_i$, the induced morphism $(Y, \beta) -> (\bigsqcup_i X_i, \bigwedge_i \alpha_i)$ is given by $[\bigwedge_i \phi_i]$; to check the totality condition in the definition of morphism, we need $\beta => (\exists \bigsqcup_i X_i) (\bigwedge_i \phi_i)$, which follows from $\beta => \bigwedge_i (\exists X_i) \phi_i$ (because each $[\phi_i]$ is a morphism) and $\bigwedge_i (\exists X_i) \phi_i => (\exists \bigsqcup_i X_i) (\bigwedge_i \phi_i)$ by \eqref{AC}.
\end{proof}

We have a distinguished object $H := (1, \top) \in \ang{\@L \mid \@T}$, whose $\kappa$-ary powers are by \cref{thm:lkk-syncat} given by $H^X = (X, \bigwedge_i \top) \cong (X, \top)$ for each $X \in \!{Set}'_\kappa$; and each relation symbol $R \in \@L(X)$ yields a subobject $R^\@H := (X, R(1_X)) `-> (X, \top) = H^X \in \ang{\@L \mid \@T}$.
We next show that $\ang{\@L \mid \@T}$ is the $\kappa$-complete Boolean $\kappa$-subextensive category ``freely generated by an object $H$, together with subobjects $R^\@H \setle H^X$ for each $R \in \@L(X)$, satisfying the relations in $\@T$''.  This requires defining the category of all such data in an arbitrary $\kappa$-complete Boolean $\kappa$-subextensive category $\!C$, i.e., the notion of a model of $\@T$ in $\!C$; see \cite[D1.2]{Jeleph}, \cite[\S2.3]{MR} for the finitary case.

Let $\@L$ be a language and $\!C$ be a $\kappa$-complete Boolean $\kappa$-subextensive category.  An \defn{$\@L$-structure $\@M = (M, R^\@M)_{R \in \@L}$ in $\!C$} consists of:
\begin{itemize}
\item  an \defn{underlying object} $M \in \!C$;
\item  for each relation symbol $R \in \@L(X)$, a subobject $R^\@M \setle \prod M^X$, called the \defn{interpretation} of $R$ in $\@M$.
\end{itemize}
Given an $\@L$-structure $\@M$ in $\!C$, we inductively define for certain $\forall$-free $\@L_{\kappa\kappa}$-formulas $\phi \in \@L_{\kappa\kappa}(X)$ an \defn{interpretation} $\phi^\@M \setle H^X$ of $\phi$ in $\@M$, as follows; we say that $\phi$ is \defn{interpretable} in $\@M$ if it has an interpretation.
\begin{itemize}
\item  An atomic formula $R(\vec{x}) \in \@L_{\kappa\kappa}(X)$, where $\vec{x} \in X^Y$ and $R \in \@L(Y)$, is interpreted as the pullback of $R^\@M \setle M^Y$ along $(\pi_{x_y})_{y \in Y} : M^X -> M^Y$ (where $\pi_x : M^X -> M$ is the $x$th projection).
\item  An atomic formula $(x = y) \in \@L_{\kappa\kappa}(X)$, where $x, y \in X$, is interpreted as the equalizer of the projections $\pi_x, \pi_y : M^X -> M$.
\item  If $\phi, \phi_i \in \@L_{\kappa\kappa}(X)$ ($I \in \!{Set}'_\kappa$) are interpretable in $\@M$, then so are $\neg \phi, \bigvee_i \phi_i, \bigwedge_i \phi_i \in \@L_{\kappa\kappa}(X)$, given by applying the corresponding $\kappa$-Boolean operations in $\Sub_\!C(M^X)$.
\item  If $\phi \in \@L_{\kappa\kappa}(X)$ is interpretable, where $X = Y \cup Z$, and the composite
\begin{align*}
\phi^\@M `-> M^X --->{\pi_{X \setminus Y}} M^{X \setminus Y}
\end{align*}
(where $\pi_{X \setminus Y}$ is the projection) is monic, then $(\exists Y) \phi \in \@L_{\kappa\kappa}(Z)$ is interpreted as the pullback of this composite along the projection $M^Z -> M^{X \setminus Y}$.
\end{itemize}
A sentence $\sigma = (\forall X) \phi$ with $\phi$ $\forall$-free is \defn{interpretable} in $\@M$ if $\phi$ is, and \defn{satisfied} in $\@M$, written $\@M |= \sigma$, if $\phi^\@M = M^X$ (thus if $\sigma = (\forall X) (\phi => \psi)$ is a sequent, this is equivalent to $\phi^\@M \setle \psi^\@M \setle M^X$).  For a theory $\@T$, $\@M$ is a \defn{model} of $\@T$, written $\@M |= \@T$, if every $\sigma \in \@T$ is satisfied in $\@M$.

\begin{proposition}[soundness]
If $\@M |= \@T$, then every $\@T$-aqf formula is interpretable in $\@M$, and every $\@T$-aqf sequent $(\forall X) (\phi => \psi)$ proved by $\@T$ is satisfied in $\@M$.
\end{proposition}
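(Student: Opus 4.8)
The plan is to establish both assertions together, by induction on a well-founded rank attached to $\@T$-aqf formulas. The inductive definition of ``$\@T$-aqf'' supplies such a rank: atomic formulas have rank $0$; $\neg\phi$, $\bigvee_i\phi_i$, $\bigwedge_i\phi_i$ strictly exceed the ranks of their immediate subformulas; and an existential $(\exists Y)\psi$ is assigned a rank strictly exceeding that of $\psi$ and of every $\@T$-aqf formula occurring anywhere in the fixed $\@T$-aqf proof witnessing that $(\exists Y)\psi$ is $\@T$-provably unique. (That this is consistent — that the relation ``$\chi$ is a proper subformula of $\phi$, or $\phi$ is an existential and $\chi$ occurs in its chosen uniqueness proof'' is well-founded — is exactly the content of the ``moreover'' clause in the definition of $\@T$-aqf, and is the one delicate point of the whole argument; see below.) Fixing a model $\@M |= \@T$, I would then prove by induction on the rank $\rho$: (i) every $\@T$-aqf formula of rank $\le\rho$ is interpretable in $\@M$; (ii) for $f\colon X -> Y$ in $\!{Set}'_\kappa$, the interpretation of $[f]\phi$ is the pullback of $\phi^\@M$ along the induced map $M^Y -> M^X$ of powers; and, along the way, (iii) any $\@T$-aqf proof built solely from $\@T$-aqf formulas of rank $<\rho$ is sound, i.e.\ takes sequents satisfied in $\@M$ to sequents satisfied in $\@M$.

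For (i) and (ii) the atomic clauses are immediate from the definition of interpretation, and the Boolean clauses from the fact — recalled in \cref{sec:cat} — that each $\Sub_\!C(M^X)$ is a $\kappa$-complete Boolean algebra and each substitution map $\Sub_\!C(M^Y) -> \Sub_\!C(M^X)$ a $\kappa$-Boolean homomorphism. The essential case is $\phi=(\exists Y)\psi$ with $\psi\in\@L_{\kappa\kappa}(X)$, $X=Y\cup Z$. Here $\psi$ and all formulas in the uniqueness proof for $\phi$ have rank $<\rho$; so by the inductive hypothesis $\psi^\@M\setle M^X$ exists and is substitution-stable, and by (iii) the uniqueness sequent $\@T |- (\forall X\sqcup Y')(\psi\wedge[Y -> Y']\psi => \bigwedge_{y\in Y}(y=f(y)))$ is satisfied in $\@M$. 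Unwinding the interpretations of its two sides and using substitution-stability, this satisfaction says exactly that the wide pullback of $\psi^\@M\setle M^X$ with itself over the projection $M^X ->> M^{X\setminus Y}$ is contained in the diagonal — equivalently, that the composite $\psi^\@M `-> M^X ->> M^{X\setminus Y}$ is monic. Thus $(\exists Y)\psi$ is interpretable, as the pullback of this monic composite along $M^Z -> M^{X\setminus Y}$, and a pullback-pasting computation gives substitution-stability of its interpretation.

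For (iii) one inducts on the proof tree. The structural and propositional rules (the infinitary forms of \cite[D1.3.1(a)--(f),(h)]{Jeleph}) and the law of excluded middle preserve satisfaction because they reduce to order- and Boolean-algebra identities in $\Sub_\!C(M^X)$; the substitution rule is exactly (ii); the rule(s) for $\exists$ use that, for the provably-unique existentials involved, $(\exists Y)\psi$ is interpreted as the image of $\psi^\@M$ along the monic composite from the previous paragraph (reindexed from $M^{X\setminus Y}$ to $M^Z$), so the governing adjunction-type inequalities hold on the nose; and \eqref{AC} follows by combining the product decomposition $M^X\cong\prod_i M^{Y_i}\times M^Z$ afforded by the disjoint splitting $X=\bigsqcup_i Y_i\sqcup Z$ with the monicity of each $\psi_i^\@M -> M^{(Y_i\cup Z)\setminus Y_i}$, so that fibrewise-unique witnesses assemble into a single witness and $\bigwedge_i(\exists Y_i)\phi_i$, $(\exists\bigcup_i Y_i)(\bigwedge_i\phi_i)$ receive the same interpretation. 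The axioms $\sigma\in\@T$ are satisfied by the hypothesis $\@M |= \@T$, which seeds the induction; and once (i), (ii) hold at all ranks, the same proof-tree induction gives soundness of every $\@T$-aqf proof, completing the proposition.

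I expect the real obstacle to be precisely the well-foundedness bookkeeping flagged in the first paragraph: one must check that no $\@T$-aqf formula ``depends on itself'' through the chain of uniqueness proofs of its existential subformulas, so that interpreting $(\exists Y)\psi$ may legitimately appeal to the soundness of \emph{its own}, strictly smaller, uniqueness proof rather than to soundness in general (which would be circular). Once the rank function is in place, every individual verification above is a routine diagram chase using only the $\kappa$-complete Boolean algebra structure of the $\Sub_\!C(M^X)$, the fact that substitution maps are $\kappa$-Boolean homomorphisms, and the elementary behaviour of images along monic composites; these run exactly parallel to the finitary treatments in \cite[D1.2, D1.3]{Jeleph} and \cite[\S2.3, 8.2]{MR}.
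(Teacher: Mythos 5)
Your proposal is correct and follows essentially the same route as the paper: substitution-stability by an easy induction, soundness of rules (including \eqref{AC}, verified via the product decomposition over the disjoint splitting) by induction on proofs, and interpretability of existentials by translating the satisfied uniqueness sequent into monicity of the composite $\psi^\@M \hookrightarrow M^X \to M^{X\setminus Y}$. The only difference is organizational: where you thread everything through one explicit rank function, the paper first proves soundness for \emph{any} proof all of whose sequents happen to be interpretable (a rank-free statement), and then runs the interpretability induction directly on the inductive definition of $\@T$-aqf, which dissolves the circularity you flag without extra bookkeeping.
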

\begin{proof}
First, one proves by an easy induction that interpretation of formulas in $\@M$ is preserved under variable substitutions: for $f : Y -> X \in \!{Set}'_\kappa$, if $\phi \in \@L_{\kappa\kappa}(Y)$ is interpretable, then so is $[f]\phi \in \@L_{\kappa\kappa}(X)$, given by the pullback of $\phi^\@M \setle M^Y$ along $(\pi_{f(y)})_{y \in Y} : M^X -> M^Y$.

Next, one proves by induction that if a sequent interpretable in $\@M$ (but not yet known to be $\@T$-aqf) is proved by $\@T$ using only other sequents interpretable in $\@M$, then it must be satisfied in $\@M$.  This is again straightforward; see \cite[D1.3.2]{Jeleph}.  Again, we only verify \eqref{AC}.  Let $X = \bigsqcup_i Y_i \sqcup Z \in \!{Set}'_\kappa$ and $\phi_i \in \@L_{\kappa\kappa}(Y_i \sqcup Z)$; we must show that
\begin{align*}
(\bigwedge_i (\exists Y_i) \phi_i)^\@M \setle ((\exists \bigcup_i Y_i) (\bigwedge_i \phi_i))^\@M \setle M^Z,
\end{align*}
assuming that the left (and right) interpretation is defined.  Interpretability of $(\exists Y_i) \phi_i \in \@L_{\kappa\kappa}(Z)$ means that $\phi_i \in \@L_{\kappa\kappa}(Y_i \sqcup Z)$ is interpretable, and the composite
\begin{align*}
\phi_i^\@M `-> M^{Y_i \sqcup Z} -> M^Z
\end{align*}
(the second map is the projection) is monic; this composite is then $((\exists Y_i) \phi_i)^\@M \setle M^Z$.  The wide pullback of all these composites is then $(\bigwedge_i (\exists Y_i) \phi_i)^\@M \setle M^Z$.  By standard facts about pullbacks (e.g., do the calculation in $\!{Set}$ using Yoneda), this wide pullback is the composite of the wide pullback of the pullbacks of $\phi_i^\@M `-> M^{Y_i \sqcup Z}$ along the projections $M^X -> M^{Y_i \sqcup Z}$, with the projection $M^X -> M^Z$, which is exactly $((\exists \bigcup_i Y_i) (\bigwedge_i \phi_i))^\@M$ (this uses the first part of the proof to relate the interpretations of $\phi_i \in \@L_{\kappa\kappa}(Y_i \sqcup Z)$ and $\phi_i \in \@L_{\kappa\kappa}(X)$).

Finally, one proves that every $\@T$-aqf formula is interpretable in $\@M$, by induction on the definition of $\@T$-aqf.  This uses a nested induction on the formula in question, using that the ``$\@T$-provably unique'' condition in the definition of $\@T$-aqf (which is satisfied in $\@M$ by the previous part of the proof and the outer induction hypothesis) exactly translates to the condition for an existential quantifier to be interpretable.
\end{proof}

\begin{corollary}
If $\@T$ is almost universal, then either $\@M |= \@T$, or there is a sequent in $\@T$ which is interpretable but not satisfied in $\@M$.
\end{corollary}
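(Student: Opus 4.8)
The plan is to run a well-founded induction along the relation $\prec$ witnessing that $\@T$ is almost universal. Rather than prove the dichotomy for $\@T$ as a whole in one go, I would prove the (apparently stronger) statement that for \emph{every} $\sigma \in \@T$, either $\@M \models \sigma$, or some sequent of $\@T$ is interpretable but not satisfied in $\@M$. The corollary then follows immediately: if $\@M \not\models \@T$, pick any $\sigma \in \@T$ with $\@M \not\models \sigma$ and apply the statement to it.

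For the induction, fix $\sigma \in \@T$ and assume the statement for every $\prec$-predecessor of $\sigma$; write $\@T_{\prec\sigma} := \{\tau \in \@T : \tau \prec \sigma\}$, regarded as a theory. First dispose of the easy case: if $\@M \not\models \tau$ for some $\tau \in \@T_{\prec\sigma}$, then the inductive hypothesis applied to $\tau$ already exhibits a sequent of $\@T$ that is interpretable but not satisfied, so we are done. Hence we may assume $\@M \models \tau$ for all $\tau \in \@T_{\prec\sigma}$, i.e.\ $\@M$ is a model of the theory $\@T_{\prec\sigma}$. Now the definition of ``almost universal'' says precisely that $\sigma$ is a $\@T_{\prec\sigma}$-aqf sequent, say $\sigma = (\forall X)(\phi => \psi)$ with $\phi, \psi$ both $\@T_{\prec\sigma}$-aqf. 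Applying the soundness proposition to the theory $\@T_{\prec\sigma}$ (which has $\@M$ as a model) shows that $\phi$, $\psi$, and hence $\neg\phi \vee \psi$, are interpretable in $\@M$; thus $\sigma$ is interpretable in $\@M$. Finally, either $\@M \models \sigma$ — the first alternative of the statement — or $\@M \not\models \sigma$, in which case $\sigma$ is itself a sequent of $\@T$ that is interpretable but not satisfied in $\@M$, which is the second alternative. This closes the induction.

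The argument is essentially bookkeeping, but the one point to be careful about is that the soundness proposition must be invoked for the sub-theory $\@T_{\prec\sigma}$ of $\prec$-predecessors rather than for $\@T$ itself. This is legitimate because soundness is stated for arbitrary theories, and because ``almost universal'' is set up exactly so that each $\sigma \in \@T$ is $\@T_{\prec\sigma}$-aqf — so no transitivity of $\prec$, nor any reshuffling of the defining well-founded presentation of $\@T$, is needed. Everything else is a routine unwinding of the inductive definitions of interpretability and satisfaction.
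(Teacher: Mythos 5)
Your proof is correct and is precisely the intended unwinding of the paper's one-line argument ("by induction on the well-founded relation witnessing that $\@T$ is almost universal"): the key points — strengthening to a per-sentence dichotomy, and invoking soundness for the sub-theory $\@T_{\prec\sigma}$ of predecessors to get interpretability of $\sigma$ — are exactly what that induction requires. No issues.
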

\begin{proof}
By induction on the well-founded relation witnessing that $\@T$ is almost universal.
\end{proof}

Let $\@T$ be an almost universal theory.  For two models $\@M, \@N \models \@T$ in $\!C$, a \defn{$\@T$-aqf embedding} $f : \@M -> \@N$ is a morphism $f : M -> N \in \!C$ such that for every $\@T$-aqf formula $\phi \in \@L_{\kappa\kappa}^{\@T\aqf}(X)$, the induced morphism $f^X : M^X -> N^X$ restricts to a (necessarily unique) morphism $\phi^\@M -> \phi^\@N$:%
\footnote{By considering the formulas $\neg \phi$, one sees that each such square must in fact be a pullback, i.e., ``$\phi^\@M = f^{-1}(\phi^\@N)$''.  By considering the formulas $x \ne y$, one sees that $f$ must be monic.  We will not need these facts.}
\begin{equation*}
\begin{tikzcd}
\phi^\@M \dar[hook] \rar[dashed] & \phi^\@N \dar[hook] \\
M^X \rar["f^X"] & N^X
\end{tikzcd}
\end{equation*}
Let $\!{\@T-Mod}(\!C)$ be the category of models of $\@T$ in $\!C$ and $\@T$-aqf embeddings.

Since the definition of $\@L$-structure in $\!C$ and the interpretation of $\@T$-aqf formulas only use the $\kappa$-complete Boolean $\kappa$-subextensive structure of $\!C$, we may pushforward a model $\@M \in \!{\@T-Mod}(\!C)$ across a $\kappa$-continuous $\kappa$-subextensive functor $\@F : \!C -> \!D$, for another $\kappa$-complete Boolean $\kappa$-subextensive category $\!D$, to obtain a model
\begin{align*}
\!{\@T-Mod}(\@F)(\@M) := \@F(\@M) := (\@F(M), \@F(R^\@M))_{R \in \@L} \in \!{\@T-Mod}(\!D),
\end{align*}
which has the property that $\@F(\phi^\@M) = \phi^{\@F(\@M)} \setle \@F(M)^X \cong \@F(M^X)$ for any $\phi \in \@L_{\kappa\kappa}^{\@T\aqf}(X)$.  Thus, for every $\@T$-aqf embedding $f : \@M -> \@N \in \!{\@T-Mod}(\!C)$, its $\@F$-image $\@F(f) : \@F(M) -> \@F(N) \in \!D$ is a $\@T$-aqf embedding $\@F(\@M) -> \@F(N) \in \!{\@T-Mod}(\!D)$.  So $\@F : \!C -> \!D \in \kappa\&{LimBSExtCat}$ induces a functor
\begin{align*}
\!{\@T-Mod}(\@F) : \!{\@T-Mod}(\!C) -> \!{\@T-Mod}(\!D) \in \&{Cat}.
\end{align*}
For a natural transformation $\xi : \@F -> \@G : \!C -> \!D \in \kappa\&{LimBSExtCat}$, for any model $\@M \in \!{\@T-Mod}(\!C)$, the component $\xi_M : \@F(M) -> \@G(M)$ is a $\@T$-aqf embedding $\@F(\@M) -> \@G(\@M)$, since for any $\phi \in \@L_{\kappa\kappa}^{\@T\aqf}(X)$, by naturality of $\xi$ we have a commutative square
\begin{equation*}
\begin{tikzcd}[column sep=4em]
\@F(\phi^\@M) = \phi^{\@F(\@M)} \dar[hook] \rar["\xi_{\phi^\@M}"] & \@G(\phi^\@M) = \phi^{\@G(\@M)} \dar[hook] \\
\@F(M^X) = \@F(M)^X \rar["\xi_{M^X} = \xi_M^X"] & \@G(M^X) = \@G(M)^X;
\end{tikzcd}
\end{equation*}
thus $\xi$ lifts to a natural transformation
\begin{align*}
\!{\@T-Mod}(\xi) : \!{\@T-Mod}(\@F) -> \!{\@T-Mod}(\@G) : \!{\@T-Mod}(\!C) -> \!{\@T-Mod}(\!D)
\end{align*}
with components $\!{\@T-Mod}(\xi)_\@M := \xi_\@M := \xi_M$ for $\@M \in \!{\@T-Mod}(\!C)$.  So we have a (strict) 2-functor
\begin{align*}
\!{\@T-Mod} : \kappa\&{LimBSExtCat} &--> \&{Cat}.
\end{align*}

Returning to the syntactic category $\ang{\@L \mid \@T}$, the object $H = (1, \top) \in \ang{\@L \mid \@T}$ together with the subobjects $R^\@H = (X, R(1_X)) \setle H^X$ form an $\@L$-structure $\@H = \@H_\@T$ in $\ang{\@L \mid \@T}$, for which it is easily seen by induction that $\phi^\@H = (X, \phi) \setle H^X$ for any $\phi \in \@L_{\kappa\kappa}^{\@T\aqf}(X)$,
whence $\@H$ is a model of $\@T$, called the \defn{universal model}.  We may finally state the universal property of $\ang{\@L \mid \@T}$:

\begin{proposition}
\label{thm:lkk-syncat-univ}
For any other $\kappa$-complete Boolean $\kappa$-subextensive category $\!C$, we have an equivalence of categories
\begin{align*}
\kappa\&{LimBSExtCat}(\ang{\@L \mid \@T}, \!C) &--> \!{\@T-Mod}(\!C) \\
\@F &|--> \@F(\@H) \\
(\xi : \@F -> \@G) &|--> \xi_\@H.
\end{align*}
(In other words, $\@H \in \!{\@T-Mod}(\ang{\@L \mid \@T})$ is a (non-strict) representation of the 2-presheaf $\!{\@T-Mod}$.)
\end{proposition}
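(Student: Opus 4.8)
The plan is to establish the equivalence by exhibiting an explicit pseudo-inverse to the evaluation 2-functor $\@F \mapsto \@F(\@H)$, and to check the two required natural isomorphisms. Given a model $\@M \models \@T$ in $\!C$, I would define a functor $\@F_\@M : \ang{\@L \mid \@T} \to \!C$ on objects by $\@F_\@M(X, \alpha) := \alpha^\@M \setle M^X$, and on a morphism $[\phi] : (X, \alpha) \to (Y, \beta)$ by sending it to the unique morphism $\alpha^\@M \to \beta^\@M$ whose graph is $\phi^\@M \setle M^{X \sqcup Y}$. For this to make sense I would first verify, using the soundness proposition, that the two sequents $\sigma, \tau$ defining ``$[\phi]$ is a function graph'' in $\ang{\@L \mid \@T}$ are satisfied in $\@M$, so that $\phi^\@M$ really is the graph of a (total, single-valued) morphism $\alpha^\@M \to \beta^\@M$ in $\!C$; here I use that in any finitely complete category a subobject of $A \times B$ projecting isomorphically (resp. monically) onto $A$ is exactly the graph of a morphism (resp. partial morphism), as in the proof of \cref{thm:subext-join-coprod}. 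Functoriality (identities and composition) then reduces to the fact, provable by soundness applied to the relevant $\@T$-aqf sequents, that the syntactic formulas for identity and composition in \cref{thm:lkk-syncat} interpret to the actual identity and composite in $\!C$ — again this is the standard ``graph of $g \circ f$ is $(\exists Y)(\mathrm{graph}(f) \wedge \mathrm{graph}(g))$'' computation, which in $\!C$ holds because the relevant existential is interpretable (the projection is monic, since $[\phi], [\psi]$ are function graphs) and interpretable existentials compute images of monos.

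Next I would check that $\@F_\@M$ is $\kappa$-continuous, Boolean, and $\kappa$-subextensive. This is where I would lean hardest on the explicit descriptions in \cref{thm:lkk-syncat}: products are given by $(\bigsqcup_i X_i, \bigwedge_i \alpha_i)$, equalizers by $(X,(\exists Y)(\phi \wedge \psi))$, subobject lattices by down-sets in the Lindenbaum--Tarski algebra, with joins, meets, and complements computed by the Boolean connectives. Since $\@F_\@M$ sends $(X,\alpha)$ to $\alpha^\@M$ and the interpretation map $\phi \mapsto \phi^\@M$ is by construction a $\kappa$-Boolean homomorphism $\@L_{\kappa\kappa}^{\@T\aqf}(X)/\@T \to \Sub_\!C(M^X)$ commuting with substitution (this is part of the soundness package), each of these syntactic (co)limits is carried to the corresponding construction in $\!C$; the one genuinely non-formal point is that a $\kappa$-ary product in $\ang{\@L \mid \@T}$ goes to a $\kappa$-ary product in $\!C$, where one must know that $\@F_\@M(\bigwedge_i \alpha_i) = \bigwedge_i \alpha_i^\@M$ computed in $\Sub_\!C(M^{\bigsqcup_i X_i}) = \Sub_\!C(\prod_i M^{X_i})$ really is the product $\prod_i \alpha_i^\@M$ in $\!C$ — which follows since $\prod_i M^{X_i} = M^{\bigsqcup_i X_i}$ and a $\kappa$-ary meet of the pulled-back subobjects $\alpha_i^\@M$ along the projections is exactly the limit of the $\alpha_i^\@M$ over the discrete diagram. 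Strictness of $H = (1,\top) \mapsto M$ and of $R^\@H \mapsto R^\@M$ is immediate from the definitions, giving a canonical isomorphism $\@F_\@M(\@H) \cong \@M$ of models.

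It then remains to show (i) for every $\@F : \ang{\@L \mid \@T} \to \!C$ in $\kappa\&{LimBSExtCat}$, there is a canonical natural isomorphism $\@F \cong \@F_{\@F(\@H)}$, and (ii) $\@M \mapsto \@F_\@M$ and $f \mapsto (\text{induced nat. transf.})$ assemble into a functor $\!{\@T-Mod}(\!C) \to \kappa\&{LimBSExtCat}(\ang{\@L \mid \@T},\!C)$ inverse, up to these isomorphisms, to evaluation; together with the already-established $\@F_\@M(\@H) \cong \@M$ this yields the equivalence, and faithfulness/fullness on 2-cells (a natural transformation $\@F \to \@G$ is determined by, and any $\@T$-aqf embedding $\@F(\@H) \to \@G(\@H)$ extends to, a natural transformation) is checked on the generating object $H$ and subobjects $R^\@H$ since these generate $\ang{\@L \mid \@T}$ under the $\kappa$-complete Boolean $\kappa$-subextensive operations, all of which $\@F, \@G$ preserve. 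The main obstacle, and the step I would write out most carefully, is (i): proving that an \emph{arbitrary} structure-preserving $\@F$ is isomorphic to the one built from its own universal-model image. The point is that because $\@F$ preserves $\kappa$-limits, finite coproducts (equivalently disjoint joins and the strict initial object), and complements, and because every object $(X,\alpha)$ of $\ang{\@L \mid \@T}$ is a subobject $\alpha^\@H \setle H^X$ cut out by a $\@T$-aqf formula built from the $R^\@H$ by these operations, an easy induction on the structure of $\alpha$ (using that $\@F$ preserves each operation used in interpreting $\alpha$, and that $\@F$ sends the syntactic interpretation $\phi^\@H = (X,\phi)$ to $\phi^{\@F(\@H)}$) shows $\@F(X,\alpha) \cong \alpha^{\@F(\@H)} = \@F_{\@F(\@H)}(X,\alpha)$ compatibly with morphisms; the delicate case is the existential quantifier, where one must use that $\@F$ preserves the mono whose image defines $(\exists Y)\phi$, which is exactly why we restricted to ``provably unique'' existentials and to $\kappa$-continuous functors. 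This is entirely analogous to the finitary treatment in \cite[D1.4]{Jeleph}, \cite[\S8.3]{MR}, and I would refer there for the routine bookkeeping while spelling out the $\kappa$-ary and ``aqf'' wrinkles.
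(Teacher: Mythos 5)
Your proposal is correct and follows essentially the same route as the paper: the pseudo-inverse $\@M \mapsto \@F_\@M$ you construct (objects to $\alpha^\@M$, morphisms to the map with graph $\phi^\@M$, well-definedness via soundness) is exactly the paper's essential-surjectivity witness, and your treatment of 2-cells via generation by $H$ and the $R^\@H$ together with the graph factorization is the paper's fullness argument. The only difference is packaging --- you verify the pseudo-inverse and the two natural isomorphisms directly, whereas the paper invokes the fully-faithful-plus-essentially-surjective criterion --- which does not change the substance.
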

\begin{proof}
This is again standard; see \cite[D1.4.7]{Jeleph}, \cite[8.2.4]{MR}.  The above functor is clearly faithful.
For fullness, given $\kappa$-continuous $\kappa$-subextensive $\@F, \@G : \ang{\@L \mid \@T} -> \!C \in \kappa\&{LimBSExtCat}$ and a $\@T$-aqf embedding $f : \@F(\@H) -> \@G(\@H)$, for each $\alpha \in \@L_{\kappa\kappa}^{\@T\aqf}(X)$, define $\xi_{(X, \alpha)} : \@F(X, \alpha) = \@F(\alpha^\@H) = \alpha^{\@F(\@H)} -> \alpha^{\@G(\@H)} = \@G(\alpha^\@M) = \@G(X, \alpha)$ to be the restriction of $f^X : \@F(H)^X -> \@G(H)^X$ (using that $f$ is a $\@T$-aqf embedding).
These form the components of a natural transformation $\xi : \@F -> \@G$: naturality with respect to the product projections $H^X = (X, \top) -> (1, \top) = H$ and the canonical inclusions $(X, \alpha) `-> (X, \top) = H^X$ is by definition; this implies naturality with respect to an arbitrary $[\phi] : (X, \alpha) -> (Y, \beta) \in \ang{\@L \mid \@T}$, by considering the factorization of $[\phi]$ through its graph $(X \sqcup Y, \phi)$:
\begin{equation*}
\begin{tikzcd}
(X, \alpha) \dar[hook] \ar[rr, bend left, "{[\phi]}"] & (X \sqcup Y, \phi) \dar[hook] \lar["\pi_X\vert\phi", "\cong"'] \rar["\pi_Y\vert\phi"'] & (Y, \beta) \dar[hook] \\
(X, \top) & (X \sqcup Y, \top) \lar["\pi_X"] \rar["\pi_Y"'] & (Y, \top)
\end{tikzcd}
\end{equation*}
For essential surjectivity, given a model $\@M \in \!{\@T-Mod}(\!C)$, define $\@F : \ang{\@L \mid \@T} -> \!C$ by sending each object $(X, \alpha)$ to (the domain of any monomorphism representative of the subobject) $\alpha^\@M \subseteq M^X$, and each morphism $[\phi] : (X, \alpha) -> (Y, \beta)$ to the unique $\alpha^\@M -> \beta^\@M \in \!C$ whose graph is $\phi^\@M \subseteq \alpha^\@M \times \beta^\@M \subseteq M^X \times M^Y = M^{X \sqcup Y}$; it is straightforward to check, using the explicit constructions in \cref{thm:lkk-syncat}, that $\@F$ is a $\kappa$-continuous $\kappa$-subextensive functor such that $\@F(\@H) \cong \@M$.
\end{proof}

\subsection{The theory of 2}

Let $\@L_2$ be the language with a single unary relation symbol $R_\top \in \@L_2(1)$.  Let $\@T_2$ be the $(\@L_2)_{\kappa\kappa}$-theory consisting of the sequents (where $x, y$ are distinct variables)
\begin{alignat*}{3}
&(\forall \{x, y\}) (& R_\top(x) \wedge R_\top(y) &=> (x = y) &), \\
&(\forall \{x, y\}) (& \neg R_\top(x) \wedge \neg R_\top(y) &=> (x = y) &), \\
&(\forall \emptyset) (& \top &=> (\exists x) R_\top(x) &), \\
&(\forall \emptyset) (& \top &=> (\exists x) \neg R_\top(x) &).
\end{alignat*}
The last two sequents are aqf relative to the first two, whence $\@T_2$ is almost universal.  It is easily seen that a model $\@M$ of $\@T_2$ in $\!C \in \kappa\&{LimBSExtCat}$ consists of an object $M \in \!C$ together with a subobject $R_\top^\@M \setle M$ which is a terminal object and whose complement is also a terminal object; in other words,

\begin{proposition}
For any $\kappa$-complete Boolean $\kappa$-subextensive category $\!C$,
$\!{\@T_2-Mod}(\!C)$ is the category of binary coproducts $1 \sqcup 1$ (equipped with cocone $u, v : 1 -> 1 \sqcup 1$) of the terminal object $1 \in \!C$.  \qed
\end{proposition}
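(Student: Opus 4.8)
The plan is to spell out the interpretation in $\!C$ of the four axioms of $\@T_2$ in an arbitrary $\@L_2$-structure $\@M = (M, R_\top^\@M)$ and to recognize the resulting data as exactly a binary coproduct $1 \sqcup 1$. First I would handle the two ``uniqueness'' axioms. In the context $\{x,y\}$ the formula $R_\top(x) \wedge R_\top(y)$ is interpreted as $R_\top^\@M \times R_\top^\@M \setle M \times M$ and $x = y$ as the diagonal, so the first axiom asserts $R_\top^\@M \times R_\top^\@M \setle \Delta_M$; since $R_\top^\@M \setle M$ is monic, this holds iff the two projections $R_\top^\@M \times R_\top^\@M \rightrightarrows R_\top^\@M$ agree, i.e.\ iff $R_\top^\@M \to 1$ is monic (``$R_\top^\@M$ subterminal''). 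Similarly, using $(\neg R_\top(x))^\@M = \neg R_\top^\@M$, the second axiom holds iff $\neg R_\top^\@M \to 1$ is monic. These are precisely the conditions making $(\exists x) R_\top(x)$ and $(\exists x) \neg R_\top(x)$ interpretable in $\@M$ (the composites $R_\top^\@M \to M^{\{x\}} \to M^\emptyset = 1$ and $\neg R_\top^\@M \to 1$ are the ones required to be monic), matching the fact that the last two axioms are aqf only relative to the first two. Granting interpretability, $((\exists x) R_\top(x))^\@M$ is $R_\top^\@M$ viewed as a subobject of $M^\emptyset = 1$, so the third axiom says this subobject is all of $1$, i.e.\ the monic $R_\top^\@M \to 1$ is an isomorphism: $R_\top^\@M$ is terminal. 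Symmetrically the fourth axiom makes $\neg R_\top^\@M$ terminal. So the outcome of this step is: $\@M \models \@T_2$ iff $R_\top^\@M$ and its complement $\neg R_\top^\@M$ are both terminal objects of $\!C$.

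Next I would move between such models and coproduct cocones. Given a model, $R_\top^\@M$ and $\neg R_\top^\@M$ are complementary, hence disjoint, subobjects of $M$ whose join is $\top = M$; by \cref{thm:subext-join-coprod} this join is their coproduct, so precomposing the inclusions with the unique isomorphisms $1 \cong R_\top^\@M$ and $1 \cong \neg R_\top^\@M$ exhibits $M$ as a coproduct $1 \sqcup 1$ with the evident cocone. For the converse, starting from a coproduct cocone $u, v : 1 \to C$, I would note that $u$ and $v$ are monic (any morphism out of the terminal object is) and check that $\operatorname{im}(u)$ and $\operatorname{im}(v)$ are complementary in $\Sub(C)$: for $\operatorname{im}(u) \vee \operatorname{im}(v) = \top$, the inclusion of $W := \operatorname{im}(u) \vee \operatorname{im}(v)$ into $C$ is a split epimorphism — the section is induced by the universal property of $C = 1 \sqcup 1$, since $u, v$ both factor through $W$ — hence an isomorphism; and $\operatorname{im}(u) \wedge \operatorname{im}(v) = \bot$ follows by splitting off $\neg\operatorname{im}(u) \setle \operatorname{im}(v)$ using Booleanness and \cref{thm:subext-join-coprod}. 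Then $\neg\operatorname{im}(u) = \operatorname{im}(v) \cong 1$ is terminal, so $(C, \operatorname{im}(u))$ is a model of $\@T_2$ recovering the given cocone.

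Finally I would match up morphisms. A $\@T_2$-aqf embedding $f : \@M \to \@N$ must preserve the interpretation of every $\@T_2$-aqf formula; applied to the (quantifier-free, hence aqf) formulas $R_\top(x)$ and $\neg R_\top(x)$, this says $f$ carries $R_\top^\@M$ into $R_\top^\@N$ and $\neg R_\top^\@M$ into $\neg R_\top^\@N$, which — in view of the coproduct decompositions of $M$ and $N$ — is exactly the statement that $f$ is a morphism of coproduct cocones (and then, by the universal property, the unique comparison, automatically an isomorphism). Conversely, a cocone morphism preserves those two subobjects; since preserving both a formula and its negation forces the corresponding square to be a pullback, a routine induction over $\@T_2$-aqf formulas shows it preserves all interpretations, so it is a $\@T_2$-aqf embedding. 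This gives the asserted identification of $\!{\@T_2-Mod}(\!C)$ with the category of binary coproducts $1 \sqcup 1$ in $\!C$.

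I expect the points needing the most care to be the interpretability bookkeeping in the first step — that the two uniqueness axioms are exactly what license the two subsequent existential quantifiers, so that ``$\@M \models \@T_2$'' is meaningful before being verified — and, in the converse, the check that the injections of an arbitrary coproduct $1 \sqcup 1$ have complementary images; the remaining object- and morphism-matching is routine.
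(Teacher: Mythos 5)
Your analysis of the four axioms is correct and is exactly what the paper leaves implicit under its ``easily seen'': the two uniqueness sequents say precisely that $R_\top^\@M$ and $\neg R_\top^\@M$ are subterminal (their interpretations in $M \times M$ land in the diagonal iff the corresponding maps to $1$ are monic), which is what licenses the interpretability of the two existential sequents, and those then say that both subobjects are terminal. The passage from such a model to a coproduct cocone via \cref{thm:subext-join-coprod}, and the matching of $\@T_2$-aqf embeddings with the (unique, invertible) cocone morphisms, are likewise fine.

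The gap is in the converse, at exactly the point you flagged. In a general $\kappa$-complete Boolean $\kappa$-subextensive category the two injections of an \emph{abstract} binary coproduct $1 \sqcup 1$ need not have disjoint images, so no argument ``splitting off $\neg\im(u) \setle \im(v)$'' can succeed, because the claim is false: the two-element poset $\bot < \top$ (equivalently, the category of sets with at most one element) is $\kappa$-complete, Boolean, and $\kappa$-subextensive, and there the categorical coproduct $1 \sqcup 1$ is $\top \vee \top = \top = 1$ with both injections the identity, so $\im(u) = \im(v) = \top$ and $\im(u) \wedge \im(v) \ne \bot$; correspondingly $\@T_2$ has \emph{no} model in this category even though a coproduct cocone exists. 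Your split-epimorphism argument does correctly give $\im(u) \vee \im(v) = \top$, but from that one only deduces $\neg\im(u) \le \im(v)$, hence $1 \cong \im(v) = (\im(u) \wedge \im(v)) \sqcup \neg\im(u)$, which is perfectly consistent with $\neg\im(u) = \bot$. To repair the statement you must either invoke the paper's standing convention that $\sqcup$ denotes a \emph{disjoint} join --- so that disjointness of the images of $u$ and $v$ is part of the data of ``$1 \sqcup 1$'', at which point Booleanness makes them complementary and your argument closes --- or restrict to the only case in which the proposition is actually used (\cref{thm:extcat-t2mod-unique}), namely extensive $\!C$, where disjointness and pullback-stability of coproducts are part of the definition.
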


\begin{corollary}
\label{thm:extcat-t2mod-unique}
If $\!C$ is extensive, then $\!{\@T_2-Mod}(\!C)$ is equivalent to the terminal category.  \qed
\end{corollary}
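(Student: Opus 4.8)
The plan is to combine the preceding Proposition with the two conditions characterizing a category that is equivalent to the terminal one: that it be nonempty, and that every hom-set be a singleton. By that Proposition and the remarks preceding it, an object of $\!{\@T_2-Mod}(\!C)$ is a pair $\@M = (M, R_\top^\@M)$ in which $R_\top^\@M \hookrightarrow M$ is a terminal subobject with terminal complement $C^\@M := (\neg R_\top)^\@M$, so that $M = R_\top^\@M \sqcup C^\@M$; and a morphism is a $\@T_2$-aqf embedding. Nonemptiness is immediate, since $\!C$ being extensive has binary coproducts: $1 \sqcup 1$ exists, and by disjointness and pullback-stability of the coproduct its two injections $1 \hookrightarrow 1 \sqcup 1$ are complementary subobjects, each a terminal object, which is precisely the data of a model of $\@T_2$.

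To see there is at most one morphism between two models $\@M = (M, R_\top^\@M)$ and $\@N = (N, R_\top^\@N)$, I would use that the formulas $R_\top(x)$ and $\neg R_\top(x)$ are quantifier-free, hence $\@T_2$-aqf, so that by definition a $\@T_2$-aqf embedding $f : \@M -> \@N$ restricts along the subobject inclusions to morphisms $R_\top^\@M -> R_\top^\@N$ and $C^\@M -> C^\@N$. Each of these is a morphism between terminal objects, hence the unique such; and since $M$ is the coproduct of $R_\top^\@M$ and $C^\@M$, the map $f$ is determined by its two composites with the coproduct injections, hence by those two restrictions.

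To see there is at least one morphism, I would take $f : M -> N$ to be the coproduct $g \sqcup h$ of the unique morphisms $g : R_\top^\@M -> R_\top^\@N$ and $h : C^\@M -> C^\@N$. Both $g$ and $h$ are isomorphisms (any morphism between terminal objects is one), and a coproduct of isomorphisms is an isomorphism, so $f$ is an isomorphism in $\!C$ carrying the subobject $R_\top^\@M \le M$ onto $R_\top^\@N \le N$. A routine induction on the construction of an interpretable formula $\phi \in \@L_{\kappa\kappa}(X)$ then gives $f^X(\phi^\@M) = \phi^\@N$ as subobjects of $N^X$: the atomic and Boolean-connective cases use that the isomorphism $f^X$ commutes with pullbacks, with equalizers, and with the Boolean operations on subobject lattices, while the existential clause uses that $f^X$ preserves monomorphisms and pullbacks, so that the monic composite $\phi^\@M \hookrightarrow M^X -> M^{X \setminus Y}$ witnessing interpretability over $\@M$ goes to the corresponding composite over $\@N$. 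Hence $f$ is a $\@T_2$-aqf embedding, in fact an isomorphism of models with inverse $g^{-1} \sqcup h^{-1}$. Putting the pieces together, $\!{\@T_2-Mod}(\!C)$ is nonempty and all its hom-sets are singletons, so it is equivalent to the terminal category. I expect the only mildly delicate step to be the existential clause of that last induction, but it is no real obstacle once one notes that isomorphisms preserve monomorphisms and pullbacks.
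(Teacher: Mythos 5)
Your proposal is correct and follows essentially the same route as the paper, which leaves the corollary as immediate from the preceding proposition: a model of $\@T_2$ is exactly a binary coproduct $1 \sqcup 1$ with its cocone, such coproducts exist in an extensive category, and colimits are unique up to unique isomorphism of cocones. Your explicit verification that the comparison isomorphism is a $\@T_2$-aqf embedding (the induction on formulas) is really just re-deriving the morphism half of that proposition, so nothing new is needed.
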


Now take $\!C := \kappa\!{Bool}_\kappa^\op$ (which is $\kappa$-complete Boolean $\kappa$-extensive by \cref{thm:kboolk-klimbext}).  By \cref{thm:kboolk-quotient}, we may identify subobjects of $A \in \kappa\!{Bool}_\kappa^\op$ with their elements $a \in A$.  The binary coproduct of the terminal object in $\kappa\!{Bool}_\kappa^\op$ is the binary product of the initial $\kappa$-Boolean algebra $2 = \{\bot, \top\} \in \kappa\!{Bool}_\kappa$, which is the free algebra $\@K(1) = \{\bot, 0, \neg 0, \top\}$ on one generator $0 \in \@K(1)$; the two product projections $\@K(1) ->> 2 \in \kappa\!{Bool}_\kappa$ take $0$ to $\top$ and $\bot$ respectively, which, when seen as subobjects of $\@K(1)$ in $\kappa\!{Bool}_\kappa^\op$, correspond via \cref{thm:kboolk-quotient} to the elements $0, \neg 0 \in \@K(1)$.  Thus, the model $\@M_2 \in \!{\@T_2-Mod}(\kappa\!{Bool}_\kappa^\op)$ given by \cref{thm:extcat-t2mod-unique} may be taken as (again using \cref{thm:kboolk-quotient})
\begin{align*}
\@M_2 := (\@K(1), 0) \in \!{\@T_2-Mod}(\kappa\!{Bool}_\kappa^\op).
\end{align*}

\begin{theorem}[$\@T_2$ presents $\kappa\!{Bool}_\kappa^\op$]
\label{thm:kboolk=syncat}
The $\kappa$-continuous subextensive functor $\@F_2 : \ang{\@L_2 \mid \@T_2} -> \kappa\!{Bool}_\kappa^\op$ corresponding to $\@M_2$ via \cref{thm:lkk-syncat-univ} is an equivalence of categories.
\end{theorem}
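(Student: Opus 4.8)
The plan is to check that $\@F_2$ is essentially surjective, full, and faithful; the one substantial ingredient will be a \emph{quantifier-elimination} lemma, to be extracted from \cref{thm:lusin-suslin}: every $\@T_2$-aqf formula in $\@L_{\kappa\kappa}^{\@T_2\aqf}(X)$ is $\@T_2$-provably equivalent to a quantifier-free one. First I set up the ``dictionary'' that makes this useful. Reading $R_\top(x)$ as the generator $x$ and $(x = y)$ as $x <-> y$, extended over the Boolean connectives, attaches to each quantifier-free $\phi \in \@L_{\kappa\kappa}^{\@T_2\aqf}(X)$ an element $\floor{\phi} \in \@K(X)$. On the target side, $\@F_2(H) = \@K(1)$ by choice of $\@F_2$ (it corresponds to $\@M_2 = (\@K(1), 0)$), so by $\kappa$-continuity $\@F_2(H^X) \cong \@F_2(H)^X$ is an $X$-ary product of copies of $\@K(1)$ in $\kappa\!{Bool}_\kappa^\op$, i.e.\ an $X$-ary coproduct of copies of $\@K(1)$ in $\kappa\!{Bool}$, which is $\@K(X)$ since $\@K$ preserves coproducts; moreover $\@F_2$ sends $R_\top^{\@H} = R_\top^{\@M_2} \setle H$ to $0 \setle \@K(1)$, and one computes, using the description of pullbacks of subobjects along projections in \cref{thm:lkk-syncat} and the naturality of $a |-> A/a$ in \cref{thm:kboolk-quotient}, that $\@F_2$ carries the subobject $(X, R_\top(x)) \setle H^X$ to the generator $x$ of $\@K(X) \cong \Sub_{\kappa\!{Bool}_\kappa^\op}(\@K(X))$. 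Since $\@F_2$ preserves all Boolean operations on subobjects, it follows that $\@F_2(X, \phi) \cong \@K(X)/\floor{\phi}$ for quantifier-free $\phi$, and that the homomorphism $\mathrm{can}_X : \@K(X) -> \Sub_{\ang{\@L_2 \mid \@T_2}}(H^X)$ with $\mathrm{can}_X(x) = (X, R_\top(x))$, post-composed with the map $\Sub_{\ang{\@L_2 \mid \@T_2}}(H^X) -> \Sub_{\kappa\!{Bool}_\kappa^\op}(\@K(X)) = \@K(X)$ induced by $\@F_2$, is the identity.

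To prove quantifier elimination I induct on the construction of $\phi$; the atomic and Boolean cases are immediate, so the real case is $\phi = (\exists Y)\psi$ in context $Z$ (with $X = Y \sqcup Z$), the existential being $\@T_2$-provably unique. By the inductive hypothesis I may take $\psi$ quantifier-free and put $b := \floor{\psi} \in \@K(Y \sqcup Z)$. Under the dictionary, the $\@T_2$-provable uniqueness of $(\exists Y)\psi$ becomes exactly the inequality $b \wedge [Y -> Y']b \le \bigwedge_{y \in Y}(y <-> y')$ in $\@K(Z \sqcup Y \sqcup Y')$ ($Y' \cong Y$ a disjoint copy), which is precisely hypothesis ($*$) of \cref{thm:lusin-suslin} with that lemma's ``$X$'' taken to be $Y \sqcup Z$ and ``$Z$'' to be $Z$. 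Hence \cref{thm:lusin-suslin} supplies a retraction $h : \@K(Y \sqcup Z) ->> \@K(Z)$ of the inclusion with $b = i_*(h(b)) \wedge \bigwedge_{y \in Y}(i_*(h(y)) <-> y)$. Choosing quantifier-free $\chi$ and $\psi_y$ over $Z$ with $\floor{\chi} = h(b)$ and $\floor{\psi_y} = h(y)$, and using that two quantifier-free formulas with the same $\floor{-}$ are $\@T_2$-provably equivalent (which reduces, via the companion fact that $(x = y)$ is $\@T_2$-provably equivalent to the quantifier-free formula with $\floor{-}$-image $x <-> y$, to soundness of Boolean reasoning), this last identity says $\@T_2 |- (\forall Z \sqcup Y)\big(\psi <=> (\chi \wedge \bigwedge_{y \in Y}(\psi_y <-> R_\top(y)))\big)$, where for formulas $<->$ abbreviates the evident Boolean combination. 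Quantifying $Y$ out, $(\exists Y)\psi => \chi$ is trivial, and for $\chi => (\exists Y)\psi$ it suffices to prove $\top => (\exists Y)\bigwedge_{y \in Y}(\psi_y <-> R_\top(y))$; this follows by case-splitting each $y$ on the excluded-middle instance $\psi_y \vee \neg\psi_y$, feeding either branch into one of the last two axioms of $\@T_2$, and then combining the one-variable witnesses via \eqref{AC} --- applicable since each $(\exists y)(\psi_y <-> R_\top(y))$ is $\@T_2$-provably unique by the first two axioms of $\@T_2$. So $(\exists Y)\psi$ is $\@T_2$-provably equivalent to the quantifier-free $\chi$, closing the induction. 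I expect this step --- matching the provable-uniqueness hypothesis to the input ($*$) of \cref{thm:lusin-suslin} and then distilling the displayed equivalence out of ($\dagger$), all while keeping every sequent used $\@T_2$-aqf --- to be the main obstacle; the remaining items (including the companion fact about $(x=y)$, which comes from the first two axioms of $\@T_2$ plus excluded middle) are bookkeeping.

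Granting quantifier elimination, I finish as follows. \emph{Essential surjectivity}: by \cref{thm:alexandrov} and the discussion after it every $A \in \kappa\!{Bool}_\kappa$ is isomorphic to some $\@K(X)/r$ with $X \in \!{Set}'_\kappa$ and $r \in \@K(X)$; picking a quantifier-free $\phi$ with $\floor{\phi} = r$ (replace each generator $x$ in a $\kappa$-Boolean term for $r$ by $R_\top(x)$), the dictionary gives $\@F_2(X, \phi) \cong \@K(X)/r \cong A$. \emph{Full faithfulness}: quantifier elimination together with the companion fact shows $\mathrm{can}_X$ is surjective (its image contains every $(X, R_\top(x))$, hence equals all of $\Sub_{\ang{\@L_2 \mid \@T_2}}(H^X)$), while the dictionary shows $\mathrm{can}_X$ is split monic; a split monic epimorphism is an isomorphism, so $\@F_2$ restricts to an order-isomorphism $\Sub_{\ang{\@L_2 \mid \@T_2}}(H^X) \cong \Sub_{\kappa\!{Bool}_\kappa^\op}(\@K(X))$ for every $X$, hence on the subobject lattice of every object (every object on either side being isomorphic to a subobject of a $\kappa$-ary power of $H$, resp.\ of $\@F_2(H) = \@K(1)$). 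Since $\@F_2$ also preserves finite products, monomorphisms and equalizers, and since in either category a morphism is the same datum as its graph --- a subobject of the relevant product whose recognition as a function graph involves only finite products, pullbacks and equality, as in the proof of \cref{thm:subext-join-coprod} --- it follows that $\@F_2$ is bijective on every hom-set. Therefore $\@F_2$ is an equivalence of categories.
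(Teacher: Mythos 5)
Your proposal is correct and follows essentially the same route as the paper: set up the dictionary between quantifier-free formulas and elements of $\@K(X)$ (the paper's $r_X, s_X$), prove quantifier-and-equality elimination for $\@T_2$ by converting provable uniqueness into hypothesis ($*$) of \cref{thm:lusin-suslin} and translating the resulting retraction back into a provable equivalence, and then conclude from bijectivity on subobject posets plus essential surjectivity. The only cosmetic difference is that you sketch the final ``conservative, full on subobjects, essentially surjective $\Rightarrow$ equivalence'' step via the graph argument, where the paper cites the standard fact directly.
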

\begin{proof}
Since $\@F_2$ takes the universal model $\@H = ((1, \top), (1, R_\top(0)))$ to an isomorphic copy of $\@M_2 = (\@K(1), 0)$, we may assume that $\@F_2(1, \top) = \@K(1)$ and $\@F_2(1, R_\top(0)) = 0 \in \@K(1)$ (again identifying subobjects in $\kappa\!{Bool}_\kappa^\op$ with elements using \cref{thm:kboolk-quotient}).  Since $\@F_2$ preserves $\kappa$-ary products, we may further assume that $\@F_2(X, \top) = \@F_2((1, \top)^X) = \@K(1)^X = \@K(X) \in \kappa\!{Bool}_\kappa^\op$ for each $X \in \!{Set}'_\kappa$, and that $\@F_2$ takes the product projections $\pi_x : (X, \top) -> (1, \top) \in \ang{\@L \mid \@T}$ to the corresponding projections in $\kappa\!{Bool}_\kappa^\op$, i.e., the coproduct injections $\@K(x) : \@K(1) -> \@K(X) \in \kappa\!{Bool}_\kappa$ taking the generator $0 \in \@K(1)$ to $x \in \@K(X)$.


Let us say that a formula $\phi \in (\@L_2)_{\kappa\kappa}(X)$ is \defn{quantifier-and-equality-free} (\defn{qef}) if it contains neither quantifiers nor $=$.  In particular, such a formula is $\@T_2$-aqf (indeed $\emptyset$-aqf).  Note that such a formula is essentially a $\kappa$-Boolean algebra term in the generators $X$, except with each generator $x \in X$ replaced by the atomic formula $R_\top(x)$.  For each such atomic formula, the corresponding subobject $(X, R_\top(x)) \setle (X, \top) \in \ang{\@L_2 \mid \@T_2}$ is (by \cref{thm:lkk-syncat}) the pullback along the projection $\pi_x : (X, \top) -> (1, \top)$ of $(1, R_\top(0)) \setle (1, \top)$; since $\@F_2$ preserves pullbacks, $\@F_2(X, R_\top(x)) \setle \@F_2(X, \top) = \@K(X) \in \kappa\!{Bool}_\kappa^\op$ must be the pushout in $\kappa\!{Bool}_\kappa$ of the quotient $\@K(1) ->> \@K(1)/0$ along $\iota_x : \@K(1) -> \@K(X)$, which is the quotient $\@K(X) ->> \@K(X)/x$, i.e., the subobject (via \cref{thm:kboolk-quotient}) $x \in \@K(X)$.  So for each $(X, \top) \in \ang{\@L_2 \mid \@T_2}$, the $\kappa$-Boolean homomorphism
\begin{align*}
r_X := \@F_2|\Sub(X, \top) : (\@L_2)_{\kappa\kappa}^{\@T_2\aqf}(X)/\@T_2 \cong \Sub_{\ang{\@L_2 \mid \@T_2}}(X, \top) &-->> \Sub_{\kappa\!{Bool}_\kappa^\op}(\@K(X)) \cong \@K(X) \\
\intertext{(the first $\cong$ by \cref{thm:lkk-syncat}) maps}
(\@L_2)_{\kappa\kappa}^{\@T_2\aqf}(X)/\@T_2 \ni [R_\top(x)] &|--> x \in \@K(X).
\end{align*}
By freeness of $\@K(X)$, $r_X$ has a section
\begin{align*}
s_X : \@K(X) &`--> (\@L_2)_{\kappa\kappa}^{\@T_2\aqf}(X)/\@T_2 \\
X \ni x &|--> [R_\top(x)],
\end{align*}
whose image clearly consists of the equivalence classes of qef formulas.  Also, clearly, the $s_X$ form a natural transformation $\@K -> (\@L_2)_{\kappa\kappa}^{\@T_2\aqf}/\@T_2 : \!{Set}'_\kappa -> \kappa\!{Bool}$, i.e., for any $f : X -> Y \in \!{Set}'_\kappa$,
\begin{align*}
\tag{$*$}
s_Y(f_*(a)) = [f]s_X(a)  \qquad\text{for any $a \in \@K(X)$}
\end{align*}
(this clearly holds on generators $a = x \in X$); it follows that
\begin{align*}
\tag{$\dagger$}
r_Y([f][\phi]) = f_*(r_X[\phi])  \qquad\text{for qef $\phi \in (\@L_2)_{\kappa\kappa}(X)$}
\end{align*}
(since $[\phi] \in \im(s_X)$, so $r_Y([f][\phi]) = r_Y([f]s_X(r_X[\phi])) = r_Y(s_Y(f_*(r_X[\phi]))) = f_*(r_X[\phi])$).

\begin{lemma}[quantifier and equality elimination for $\@T_2$]
\label{lm:qee}
Every $\phi \in (\@L_2)_{\kappa\kappa}^{\@T_2\aqf}(X)$ is $\@T_2$-provably equivalent to a qef formula.  Thus, the homomorphisms $r_X, s_X$ above are inverse isomorphisms.
\end{lemma}
\begin{proof}
An equality $(x = y)$ is easily $\@T_2$-equivalent to the qef formula $R_\top(x) <-> R_\top(y)$.  Thus by induction, it is enough to prove that for any qef $\phi \in (\@L_2)_{\kappa\kappa}(X)$ and $X = Y \sqcup Z$, if the existential $(\exists Y) \phi \in (\@L_2)_{\kappa\kappa}(Z)$ is $\@T_2$-provably unique, then it is $\@T_2$-equivalent to a qef formula.

That $(\exists Y) \phi$ is $\@T_2$-provably unique means
\begin{align*}
\@T_2 |- (\forall X \sqcup Y') \left([f]\phi \wedge [g]\phi => \bigwedge_{y \in Y} (f(y) = g(y))\right),
\end{align*}
where $Y'$ is a copy of $Y$ disjoint from $X$, with bijection $g : Y \cong Y'$ extending to $g : X = Y \sqcup Z \cong Y' \sqcup Z \setle X \sqcup Y'$, and $f : X `-> X \sqcup Y'$ is the inclusion.  Eliminating the equalities $f(y) = g(y)$ as above yields
\begin{align*}
\@T_2 |- (\forall X \sqcup Y') \left([f]\phi \wedge [g]\phi => \bigwedge_{y \in Y} (R_\top(f(y)) <-> R_\top(g(y)))\right),
\end{align*}
i.e.,
\begin{align*}
[f][\phi] \wedge [g][\phi] \le \bigwedge_{y \in Y} ([R_\top(f(y))] <-> [R_\top(g(y))]) \in (\@L_2)_{\kappa\kappa}^{\@T_2\aqf}(X \sqcup Y')/\@T_2.
\end{align*}
Applying $r_{X \sqcup Y'}$ and ($\dagger$) yields
\begin{align*}
f_*(r_X[\phi]) \wedge g_*(r_X[\phi]) \le \bigwedge_{y \in Y} (f(y) <-> g(y)) \in \@K(X \sqcup Y').
\end{align*}
By \cref{thm:lusin-suslin}, there is a retraction $h : \@K(X) ->> \@K(Z) \in \kappa\!{Bool}_\kappa$ of $i_*$, where $i : Z `-> X$ is the inclusion, such that
\begin{align*}
r_X[\phi] = i_*(h(r_X[\phi])) \wedge \bigwedge_{y \in Y} (i_*(h(y)) <-> y) \in \@K(X).
\end{align*}
Applying $s_X$ and ($*$) yields
\begin{align*}
[\phi] = [i]s_X(h(r_X[\phi])) \wedge \bigwedge_{y \in Y} ([i]s_X(h(y)) <-> [R_\top(y)]) \in (\@L_2)_{\kappa\kappa}^{\@T_2\aqf}(X)/\@T_2.
\end{align*}
So letting $s_X(h(r_X[\phi])) = [\psi]$ and $s_X(h(y)) = [\psi_y]$ for some qef $\psi, \psi_y \in (\@L_2)_{\kappa\kappa}(Z)$, we have
\begin{align*}
\@T_2 |- (\forall X) \left(\phi <=> \psi \wedge \bigwedge_{y \in Y} (\psi_i <-> R_\top(y))\right).
\end{align*}
This easily yields
\begin{align*}
\@T_2 |- (\forall Z) \left((\exists Y) \phi <=> \psi \wedge (\exists Y) \bigwedge_{y \in Y} (\psi_i <-> R_\top(y))\right),
\end{align*}
where $(\exists Y) \bigwedge_{y \in Y} (\psi_i <-> R_\top(y))$ is $\@T_2$-provably unique by the first two axioms of $\@T_2$; but then the last two axioms of $\@T_2$ easily prove $\bigwedge_{y \in Y} (\exists y) (\psi_i <-> R_\top(y))$, which implies $(\exists Y) \bigwedge_{y \in Y} (\psi_i <-> R_\top(y))$ by \eqref{AC}.  So $\@T_2 |- (\forall Z) ((\exists Y) \phi <=> \psi)$.
\end{proof}

So $\@F_2 : \ang{\@L_2 \mid \@T_2} -> \kappa\!{Bool}_\kappa^\op$ is an isomorphism on each subobject poset $\Sub_{\ang{\@L_2 \mid \@T_2}}(X, \top)$.  It follows easily that it restricts to an isomorphism on every subobject poset $\Sub_{\ang{\@L_2 \mid \@T_2}}(X, \alpha)$, where we may assume $\alpha$ is qef by \cref{lm:qee}, by considering the commutative diagram
\begin{equation*}
\begin{tikzcd}
\Sub_{\ang{\@L_2 \mid \@T_2}}(X, \alpha) \dar[hook] \rar["\@F_2"] & \Sub_{\kappa\!{Bool}_\kappa^\op}(\@K(X)/r_X[\alpha]) \dar[hook] \rar[phantom, "\cong"] &[-1em] \down r_X[\alpha] \dar[hook] \\
\Sub_{\ang{\@L_2 \mid \@T_2}}(X, \top) \rar["\@F_2"'] & \Sub_{\kappa\!{Bool}_\kappa^\op}(\@K(X)) \rar[phantom, "\cong"] & \@K(X)
\end{tikzcd}
\end{equation*}
where the vertical arrows are inclusions (note: not $\kappa$-Boolean homomorphisms): injectivity of $\@F_2|\Sub(X, \alpha)$ follows immediately from that of $\@F_2|\Sub(X, \top)$; for surjectivity, for $b \in \down r_X[\alpha]$, letting $\beta \in (\@L_2)_{\kappa\kappa}(X)$ be qef with $[\beta] = s_X(b) \le s_X(r_X[\alpha]) = [\alpha]$, we have $(X, \beta) \in \Sub(X, \alpha)$ with $\@F_2(X, \beta) = r_X[\beta] = r_X(s_X(b)) = b$.

Finally, every $A \in \kappa\!{Bool}_\kappa$ is (up to isomorphism) a quotient of a free algebra $\@K(X)$ for some $X \in \!{Set}'_\kappa$, i.e., a subobject in $\kappa\!{Bool}_\kappa^\op$ of some $\@K(X) = \@F_2(X, \top)$; so $\@F_2$ is essentially surjective.  So $\@F_2$ is a finitely continuous functor between finitely complete categories which is a bijection on each subobject poset (in the usual terminology, \defn{conservative} and \defn{full on subobjects}) as well as essentially surjective, hence an equivalence (see e.g., \cite[D3.5.6]{Jeleph}).
\end{proof}

\begin{theorem}
\label{thm:kboolk-init}
$\kappa\!{Bool}_\kappa^\op$ is an initial object in $\kappa\&{LimBExtCat}$.
\end{theorem}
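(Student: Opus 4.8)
The plan is to assemble the results already established. Fix an arbitrary object $\!C$ of $\kappa\&{LimBExtCat}$, i.e., a $\kappa$-complete Boolean extensive category; we must show that the hom-category $\kappa\&{LimBExtCat}(\kappa\!{Bool}_\kappa^\op, \!C)$ is equivalent to the terminal category. First I would observe that $\!C$ is $\kappa$-extensive by \cref{thm:klimbext-kext}, hence $\kappa$-subextensive, so that the model-theoretic machinery of \cref{sec:authy} applies to it. Moreover, since $\kappa\&{LimBExtCat} = \kappa\&{LimB\kappa ExtCat}$ is a \emph{full} sub-2-category of $\kappa\&{LimBSExtCat}$ (fullness being \cref{thm:ext-coprod-join}) which contains $\kappa\!{Bool}_\kappa^\op$ (by \cref{thm:kboolk-klimbext}), the hom-category in question coincides with the hom-category taken in $\kappa\&{LimBSExtCat}$, i.e., $\kappa\&{LimBExtCat}(\kappa\!{Bool}_\kappa^\op, \!C) = \kappa\&{LimBSExtCat}(\kappa\!{Bool}_\kappa^\op, \!C)$.

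Next I would transport along the equivalence $\@F_2 : \ang{\@L_2 \mid \@T_2} \to \kappa\!{Bool}_\kappa^\op$ of \cref{thm:kboolk=syncat}. A pseudo-inverse of $\@F_2$ automatically preserves all limits and colimits (any equivalence of categories does), hence is $\kappa$-continuous and preserves $\kappa$-ary coproducts, hence is $\kappa$-subextensive by \cref{thm:ext-coprod-join}; thus $\@F_2$ is an equivalence internal to the 2-category $\kappa\&{LimBSExtCat}$, and precomposition with it induces an equivalence $\kappa\&{LimBSExtCat}(\kappa\!{Bool}_\kappa^\op, \!C) \simeq \kappa\&{LimBSExtCat}(\ang{\@L_2 \mid \@T_2}, \!C)$. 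Now the universal property of the syntactic category, \cref{thm:lkk-syncat-univ}, identifies the right-hand side with $\!{\@T_2-Mod}(\!C)$; and \cref{thm:extcat-t2mod-unique}, applicable because $\!C$ is extensive, says that $\!{\@T_2-Mod}(\!C)$ is equivalent to the terminal category. Composing these equivalences shows $\kappa\&{LimBExtCat}(\kappa\!{Bool}_\kappa^\op, \!C)$ is equivalent to the terminal category, which is precisely the assertion that $\kappa\!{Bool}_\kappa^\op$ is an initial object of $\kappa\&{LimBExtCat}$.

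At this stage the argument is pure bookkeeping and there is no substantive obstacle left: the mathematical content has all been discharged earlier — in the verification that $\kappa\!{Bool}_\kappa^\op$ is $\kappa$-complete Boolean $\kappa$-extensive (\cref{thm:kboolk-klimbext}), in the quantifier-and-equality elimination \cref{lm:qee} (which rests on the Lusin--Suslin-type \cref{thm:lusin-suslin}, itself a consequence of LaGrange interpolation, \cref{thm:lagrange}), and in \cref{thm:kboolk=syncat}. The one point that repays a moment's care is the passage between the 2-categories $\kappa\&{LimBExtCat}$ and $\kappa\&{LimBSExtCat}$, together with the observation that a pseudo-inverse of $\@F_2$ remains a morphism of the appropriate kind, so that $\@F_2$ genuinely induces an equivalence on hom-categories.
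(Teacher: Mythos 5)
Your proposal is correct and follows exactly the paper's own argument: the paper's proof of \cref{thm:kboolk-init} is precisely the one-line assembly of \cref{thm:kboolk-klimbext}, \cref{thm:kboolk=syncat}, \cref{thm:lkk-syncat-univ}, and \cref{thm:extcat-t2mod-unique}, and you have simply made the implicit bookkeeping (passing between $\kappa\&{LimBExtCat}$ and $\kappa\&{LimBSExtCat}$ via fullness, and transporting along the equivalence $\@F_2$) explicit.
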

\begin{proof}
By \cref{thm:kboolk-klimbext}, \cref{thm:kboolk=syncat}, \cref{thm:lkk-syncat-univ}, and \cref{thm:extcat-t2mod-unique}.
\end{proof}

\bigskip\noindent
Department of Mathematics \\
University of Michigan \\
Ann Arbor, MI 48109, USA \\
Email: \nolinkurl{ruiyuan@umich.edu}


\begin{thebibliography}{00000}


\bibitem[Bar]{Bar}  M.~Barr, \emph{On categories with effective unions}, in: \emph{Categorical algebra and its applications} (Louvain-La-Neuve, 1987), Lecture Notes in Math., vol.~1348, Springer, Berlin, 1988, 19--35.

\bibitem[BKP]{BKP}  R.~Blackwell, G.~M.~Kelly, and A.~J.~Power, \emph{Two-dimensional monad theory}, J.\ Pure Appl.\ Algebra \textbf{59}~(1989), 1--41.

\bibitem[Bor]{Bor}  F.~Borceux, \emph{Handbook of categorical algebra}, Encyclopedia of Mathematics and its Applications, vol.~50--52, Cambridge University Press, 1994.


\bibitem[CLW]{CLW}  A.~Carboni, S.~Lack, and R.~F.~C.~Walters, \emph{Introduction to extensive and distributive categories}, J.\ Pure Appl.\ Algebra \textbf{84}~(1993), 145--158.

\bibitem[Ch1]{Cqpol}  R.~Chen, \emph{Notes on quasi-Polish spaces}, preprint, \url{https://arxiv.org/abs/1809.07440}, 2018.

\bibitem[Ch2]{Cls}  R.~Chen, \emph{A simple proof of the Lusin--Suslin theorem}, unpublished note, \url{https://rynchn.github.io/math/lusin-suslin.pdf}, 2018.

\bibitem[Ch3]{Cscc}  R.~Chen, \emph{Borel functors, interpretations, and strong conceptual completeness for $\@L_{\omega_1\omega}$}, Trans.\ Amer.\ Math.\ Soc.\ \textbf{372}~(2019), no.~12, 8955--8983.

\bibitem[Ch4]{Cloc}  R.~Chen, \emph{Borel and analytic sets in locales}, preprint, \url{https://arxiv.org/abs/2011.00437}, 2020.


\bibitem[Cos]{Cos}  M.~Coste, \emph{Localisation, spectra and sheaf representation}, in: \emph{Applications of sheaves}, Lecture Notes in Math., vol.~753, Springer-Verlag, 1979, 212--238.

\bibitem[deB]{deB}  M.~de~Brecht, \emph{Quasi-Polish spaces}, Ann.\ Pure Appl.\ Logic \textbf{164}~(2013), no.~3, 356--381.

\bibitem[GL]{GL}  R.~Garner and S.~Lack, \emph{Lex colimits}, J.\ Pure Appl.\ Algebra \textbf{216}~(2012), no.~6, 1372--1396.

\bibitem[Hec]{Hec}  R.~Heckmann, \emph{Spatiality of countably presentable locales (proved with the Baire category theorem)}, Math.\ Structures Comput.\ Sci.\ \textbf{25}~(2015), no.~7, 1607--1625.

\bibitem[Isb]{Isb}  J.~R.~Isbell, \emph{First steps in descriptive theory of locales}, Trans.\ Amer.\ Math.\ Soc.\ \textbf{327}~(1991), no.~1, 353--371.

\bibitem[Jac]{Jac}  B.~Jacobs, \emph{Categorical logic and type theory}, Studies in Logic and the Foundations of Mathematics, vol.~141, North-Holland, 1998.

\bibitem[J79]{J79}  P.~T.~Johnstone, \emph{A syntactic approach to Diers' localizable categories}, in: \emph{Applications of sheaves}, Lecture Notes in Math., vol.~753, Springer-Verlag, 1979, 466--478.

\bibitem[J82]{Jstone}  P.~T.~Johnstone, \emph{Stone spaces}, Cambridge Studies in Advanced Mathematics, vol.~3, Cambridge University Press, 1982.

\bibitem[J02]{Jeleph}  P.~T.~Johnstone, \emph{Sketches of an elephant: a topos theory compendium}, Oxford Logic Guides, vol.~43--44, Oxford University Press, 2002.

\bibitem[Kec]{Kcdst}  A.~S.~Kechris, \emph{Classical descriptive set theory}, Graduate Texts in Mathematics, vol.~156, Springer-Verlag, 1995.

\bibitem[Lag]{Lag}  R.~Lagrange, \emph{Amalgamation and epimorphisms in $\&m$ complete Boolean algebras}, Algebra Universalis \textbf{4}~(1974), 277--279.

\bibitem[Law]{Law}  F.~W.~Lawvere, \emph{Adjointness in foundations}, Repr.\ Theory Appl.\ Categ.\ \textbf{16}~(2006), 1--16.

\bibitem[Mac]{Mac}  S.~Mac~Lane, \emph{Categories for the Working Mathematician}, Grad.\ Texts in Math.\ \textbf{5}, Springer, 2nd ed., 1998.

\bibitem[Mad]{Mad}  J.~J.~Madden, \emph{$\kappa$-frames}, J.\ Pure Appl.\ Algebra \textbf{70}~(1991), 107--127.

\bibitem[MR]{MR}  M.~Makkai and G.~E.~Reyes, \emph{First order categorical logic}, Lecture Notes in Math., vol.~611, Springer-Verlag, 1977.

\bibitem[MRg]{MRg}  V.~Marra and L.~Reggio, \emph{A characterisation of the category of compact Hausdorff spaces}, preprint, \url{https://arxiv.org/abs/1808.09738}, 2019.

\bibitem[PdB]{PdB}  A.~Pauly and M.~de~Brecht, \emph{Towards synthetic descriptive set theory: an instantiation with represented spaces}, preprint, \url{https://arxiv.org/abs/1307.1850}, 2013.

\bibitem[Sel]{Sel}  V.~Selivanov, \emph{Towards a descriptive theory of cb$_0$-spaces}, Math.\ Structures Comput.\ Sci.\ \textbf{27}~(2017), no.~8, 1553--1580.

\bibitem[Sik]{Sik}  R.~Sikorski, \emph{Boolean algebras}, Springer-Verlag, third edition, 1969.

\end{thebibliography}
\end{document}